\newtheorem{thm}{Theorem}[section]
\newtheorem{thmInt}{Theorem}[section]
\newaliascnt{prop}{thm}
\newtheorem{prop}[prop]{Proposition}
\newaliascnt{lem}{thm}
\newtheorem{lem}[lem]{Lemma}
\newaliascnt{cor}{thm}
\newtheorem{cor}[cor]{Corollary}
\theoremstyle{definition}
\newaliascnt{definition}{thm}
\newtheorem{definition}[definition]{Definition}
\newaliascnt{remark}{thm}
\newtheorem{remark}[remark]{Remark}
\newaliascnt{ex}{thm}
\newtheorem{ex}[ex]{Example}
\newaliascnt{qn}{thm}
\newtheorem{qn}[qn]{Question}
\newaliascnt{stp}{thm}
\newaliascnt{setup}{thm}
\numberwithin{equation}{section}
\DeclareMathOperator{\im}{im} % image of a morphism
\DeclareMathOperator{\cok}{coker} % coker of a morphism
\newcommand{\iso}{\cong}
\newcommand{\all}{\forall\,}
\newcommand{\st}{\,|\,} % such that
\newcommand{\comp}{\circ} % composition
\newcommand{\rest}[1]{|_{#1}} % restriction of function
\newcommand{\id}{\mathrm{id}}
\newcommand{\Hom}{\mathrm{Hom}}
\newcommand{\Ext}{\mathrm{Ext}}
\newcommand{\mor}[1]{\xrightarrow{#1}} % morphism 
\newcommand{\mono}{\hookrightarrow} % monomorphism
\newcommand{\epi}{\twoheadrightarrow} % epimorphism
\newcommand{\kk}{\Bbbk} % field K
\newcommand{\NN}{\mathbb{N}}
\newcommand{\ZZ}{\mathbb{Z}}
\newcommand{\s}[1]{\mathcal{#1}} % sheaf
\newcommand{\so}{\s{O}} % structure sheaf O
\newcommand{\sHom}{\s{H}om} % sheaf Hom
\newcommand{\opp}{^{\circ}} % opposed category or functor
\newcommand{\farg}{-} % argument of a functor
\newcommand{\ort}[1]{\langle#1\rangle} % (semi)orthogonal decomposition
\newcommand{\sh}[2][1]{#2[#1]} % shift functor
\newcommand{\cat}[1]{{\mathscr{#1}}} % generic category
\newcommand{\ca}{\cat{A}}
\newcommand{\cA}{\cat{A}}
\newcommand{\cb}{\cat{B}}
\newcommand{\cc}{\cat{C}}
\newcommand{\cC}{\cat{C}}
\newcommand{\cd}{\cat{D}}
\newcommand{\cP}{\cat{P}}
\newcommand{\cs}{\cat{S}}
\newcommand{\ct}{\cat{T}}
\newcommand{\cT}{\cat{T}}
\newcommand{\Proj}{\cP}
\newcommand{\scat}[1]{{\mathbf{#1}}} % special category
\newcommand{\Coh}{\scat{Coh}}
\newcommand{\Qcoh}{\scat{Qcoh}}
\newcommand{\dgCat}{\scat{dgCat}} % category of dg categories
\newcommand{\Hqe}{\scat{Hqe}} % homotopy category of dg categories
\newcommand{\D}{\scat{D}} % derived category
\newcommand{\Da}{\D^?}
\newcommand{\Dm}{\D^-}
\newcommand{\Db}{\D^b}
\newcommand{\Dsb}{\D^{sb}}
\newcommand{\Dq}{\D_{\scat{qc}}}
\newcommand{\Dp}{\scat{Perf}} % category of perfect complexes
\newcommand{\K}{\scat{K}} % homotopy category (of complexes)
\newcommand{\Ka}{\K^?}
\newcommand{\Km}{\K^-}
\newcommand{\Kb}{\K^b}
\newcommand{\C}{\scat{C}} % category of complexes
\newcommand{\dgC}{\C_{\scat{dg}}} % dg category of complexes
\newcommand{\Acya}{\Ka_{\mathrm{acy}}}
\newcommand{\V}{\scat{V}}
\newcommand{\Va}{\V^?}
\newcommand{\B}{\scat{B}}
\newcommand{\Ba}{\B^?}
\newcommand{\Perf}[1]{\mathrm{Perf}(#1)} % dg perfect category
\newcommand{\Mod}[1]{\mathrm{Mod}(#1)} % category of modules
\newcommand{\dgMod}[1]{\mathrm{dgMod}(#1)}
\newcommand{\dgHom}{\underline{\sHom}} % dg category of dg functors
\newcommand{\fun}[1]{\mathsf{#1}} % functor
\newcommand{\fD}{\fun{D}}
\newcommand{\fF}{\fun{F}}
\newcommand{\fG}{\fun{G}}
\newcommand{\fH}{\fun{H}}
\newcommand{\fI}{\fun{I}}
\newcommand{\fQ}{\fun{Q}}
\newcommand{\dgYon}[1][\cc]{\fun{Y}^{#1}_{\mathrm{dg}}} % dg Yoneda embedding
\newcommand{\ep}{\varepsilon}
\newcommand{\sm}[1][]{\mathrm{d}_{#1}} % sequence map
\newcommand{\Sq}{\scat{S}}
\newcommand{\Sqa}{\Sq_{\scat{a}}}
\newcommand{\Sqp}{\Sq_{\scat{p}}}
\newcommand{\Sqpa}{\Sqp^?}
\newcommand{\Sqpu}{\Sqp^+}
\newcommand{\Sqpm}{\Sqp^-}
\newcommand{\Sqpb}{\Sqp^b}
\newcommand{\Sqpsb}{\Sqp^{sb}}
\newcommand{\Sqpbfg}{\Sq_{\scat{p,fg}}^b}
\newcommand{\Sqpmfg}{\Sq_{\scat{p,fg}}^-}
\newcommand{\Sqpafg}{\Sq_{\scat{p,fg}}^?}
\newcommand{\Sqpc}{\Sqp^c}
\newcommand{\Sqd}[1][]{\Sq^d_{#1}}
\newcommand{\Sqpd}{\Sqp^d}
\newcommand{\Sqpad}{\Sqp^{?,d}}
\newcommand{\Sqpmd}{\Sqp^{-,d}}
\newcommand{\Sqpafgd}{\Sq_{\scat{p,fg}}^{?,d}}
\newcommand{\Sqpdc}{\Sqp^{\overline{c}}}
\newcommand{\Sqpadc}{\Sqp^{?,\overline{c}}}
\newcommand{\SQ}{\widehat{\Sq}}
\newcommand{\SQa}{\SQ_{\scat{a}}}
\newcommand{\SQp}{\SQ_{\scat{p}}}
\newcommand{\SQpa}{\SQp^?}
\newcommand{\SQpu}{\SQp^+}
\newcommand{\SQpb}{\SQp^b}
\newcommand{\SQpsb}{\SQp^{sb}}
\newcommand{\SQpbfg}{\SQ_{\scat{p,fg}}^b}
\newcommand{\SQpmfg}{\SQ_{\scat{p,fg}}^-}
\newcommand{\SQpafg}{\SQ_{\scat{p,fg}}^?}
\newcommand{\SQpc}{\SQp^c}
\newcommand{\SQd}[1][]{\SQ^d_{#1}}
\newcommand{\SQpad}{\SQp^{?,d}}
\newcommand{\SQpadc}{\SQp^{?,\overline{c}}}
\newcommand{\sq}[1]{#1^{\bullet}}
\newcommand{\quot}[1]{\overline{#1}}
\newcommand{\kep}{\kk[\ep]}
\newcommand{\Kac}{\K_{\scat{a}}}
\newcommand{\Khp}{\K_{\scat{p}}}
\newcommand{\Kco}{\K_{\scat{c}}}
\newcommand{\Kfr}{\K_{\scat{f}}}
\newcommand{\Kmi}{\K_{\scat{m}}}
\newcommand{\Kaf}{\K_{\scat{a,f}}}
\newcommand{\Kam}{\K_{\scat{a,m}}}
\newcommand{\Kpf}{\K_{\scat{p,f}}}
\newcommand{\Kpm}{\K_{\scat{p,m}}}
\newcommand{\Dmfg}{\Dm_\text{fg}}
\newcommand{\Dafg}{\Da_\text{fg}}
\newcommand{\Dcp}{\D^c}
\newcommand{\fMod}[1]{\mathrm{mod}(#1)} % category of f.g. modules
\newcommand{\compat}[1]{\mathrm{Com}(#1)} % compatible elements
\newcommand{\inj}[1]{#1_{\mathrm{inj}}} % injective part
\newcommand{\ninj}[1]{#1_{\mathrm{ni}}} % not injective part
\newcommand{\SHom}{\sq{\Hom}} % sequence Hom
\newcommand{\Homep}{\Hom^{\ep}} % Hom of type epsilon
\newcommand{\Homph}{\Hom^{\mathrm{ph}}} % phantom morphisms
\newcommand{\fin}{\fI}
\newcommand{\fpr}{\fQ}
\newcommand{\inc}{\iota}
\newcommand{\nat}{\tau} %natural transformation
\newcommand{\nata}{\alpha}
\newcommand{\natb}{\beta}
\newcommand{\natc}{\gamma}
\newcommand{\natd}{\delta}
\newcommand{\nate}{\epsilon}
\newcommand{\cotr}[1]{#1_{\mathrm{ct}}} % coproduct of truncations
\begin{document}

	\author[A.~Canonaco]{Alberto Canonaco}
\address{A.C.: Dipartimento di Matematica ``F.\ Casorati''\\
        Universit{\`a} degli Studi di Pavia\\
        Via Ferrata 5\\
        27100 Pavia\\
        Italy}
	\email{alberto.canonaco@unipv.it}	
    
  \author[A.~Neeman]{Amnon Neeman}
    \address{A.N.: Dipartimento di Matematica ``F.~Enriques''\\Universit\`a degli Studi di Milano\\Via Cesare Saldini 50\\ 20133 Milano\\ Italy}
    \email{amnon.neeman@unimi.it}
    
    \author[P.~Stellari]{Paolo Stellari}
    \address{P.S.: Dipartimento di Matematica ``F.~Enriques''\\Universit\`a degli Studi di Milano\\Via Cesare Saldini 50\\ 20133 Milano\\ Italy}
    \email{paolo.stellari@unimi.it}
    \urladdr{\url{https://sites.unimi.it/stellari}}

 \title[Strong uniqueness of enhancements for the dual numbers]
       {Strong uniqueness of enhancements for the dual numbers: a case study}

 \thanks{A.~C.~is a member of GNSAGA (INdAM) and was partially supported by the research project PRIN 2022 ``Moduli spaces and special varieties''. A.~N.~was partly supported 
   by Australian Research Council Grants DP200102537 and DP210103397,
   and by ERC Advanced Grant 101095900-TriCatApp.
    	P.~S.~was partially supported by the ERC Consolidator Grant ERC-2017-CoG-771507-StabCondEn, by the research project PRIN 2022 ``Moduli spaces and special varieties'', and by the research project FARE 2018 HighCaSt (grant number R18YA3ESPJ)}

\subjclass[2020]{Primary 18G80, secondary 14F08, 18N40, 18N60}

\keywords{Triangulated categories,dg categories,enhancements}

\begin{abstract}
We prove that the bounded and bounded below derived categories of (all) modules over the dual numbers have strongly unique (dg) enhancements. To this end we relate those categories to the category of sequences of vector spaces, which allows a complete classification of indecomposable objects. Along the way we also prove that all the derived categories of any hereditary category have strongly unique enhancements.
\end{abstract}

\maketitle

\setcounter{tocdepth}{1}
\tableofcontents

\section*{Introduction}

The fruitful interplay between algebraic geometry and homological algebra is a key feature of the recent history of both disciplines. In the old days, meaning back in the last decades of the 20-th century, the focus was on the theory of derived and triangulated categories naturally associated to schemes, with an emphasis on the bounded derived categories of coherent sheaves and on their semiorthogonal decompositions. As the relation between these two disciplines grew tighter and tighter, another key feature appeared to have high relevance: the fact that most triangulated categories admit so-called enhancements, meaning higher categorical models.

The passage to this new viewpoint provided the elegant and effective formalism of derived algebraic geometry and of the (more recent) infinity-category version. Both theories eliminate some of the well-known pathologies of triangulated categories. The new theories has seen beautiful applications and led to manifold achievements, which we will mostly not recount here. The reader is invited to study (for example) the vast literature on Fourier--Mukai functors, which are much clearer and more transparent in the enhanced setting.

Of course, this comes at a price: if nothing else, one has to ask oneself which constructions, if any, depend on the choice of higher categorical structure. 

Let us accept that enhancing triangulated categories might be relevant for geometric applications; it is not a priori clear which of the higher categorical enhancements is most suitable or efficient. There are at least three major approaches to higher categories. The first, inspired by algebraic geometry and representation theory, uses (pretriangulated) \emph{differential graded (dg) categories}. The second, coming from symplectic geometry/topology, uses the notion of (pretriangulated) $A_\infty$ \emph{categories.} The last one, inspired by algebraic topology, deals with (stable) \emph{$\infty$ categories}. A considerable and recent body of work has been devoted to the comparison between these three viewpoints and, luckily for us, it turns out that they are all equivalent under the reasonable assumption that all categories are linear over some commutative ring. The fact that the first and the second type of models are essentially the same is the content of \cite{COS1,COS2} while the comparison between the first and the third type of higher categories is the content of \cite{Coh,Doni}.

Given this, we may choose whichever of the three equivalent approaches we prefer. In this paper we will consider only dg enhancements. This leaves us with the second foundational question: do our constructions depend on the choice of dg enhancement? After all it is possible for a triangulated category to have different dg enhancements. Of course, each of them may be useful to deal with specific geometric problems, but nonetheless one would like to be able to compare different enhancements and, at best, prove that they are equivalent in a useful sense.

There turn out to be at least two different notions of uniqueness for enhancements, both introduced in the seminal paper \cite{LO} by Lunts and Orlov. More precisely, an enhancement for a triangulated category $\cT$ is a pair $(\cC,\fF)$ where $\cC$ is a pretriangulated dg category and $\fF\colon H^0(\cC)\to\cT$ is an exact equivalence. Then $\cT$ has a \emph{(strongly) unique enhancement} if, given two of them $(\cC_1,\fF_1)$ and $(\cC_2,\fF_2)$, there exists an isomorphism $f\colon\cc_1\to\cC_2$ in $\Hqe$ (such that $H^0(f)\iso\fF_2\comp\fF_1^{-1}$). Here $\Hqe$ denotes the localization of the category of dg categories with respect to the class of quasi-equivalences.

For uniqueness, the main results were obtained in a series of papers, starting with \cite{LO}, which we discuss in \autoref{subsec:enhuniq}. The current most general result was obtained in \cite{CNS}, and is reported as \autoref{thm:main2}: the derived category of any abelian category and many triangulated categories of geometric interest have unique enhancements.

Coming to strong uniqueness, it is easy to see that, once a triangulated category $\cT$ is known to have a unique enhancement, the enhancement is also strongly unique if and only if every exact autoequivalence $\fG$ of $\cT$ admits a dg lift. This means that, given an arbitrary enhancement $(\cC,\fF)$ of $\cT$, there exists an automorphism $f$ of $\cC$ in $\Hqe$ such that $\fG\iso\fF\comp H^0(f)\comp\fF^{-1}$. Thus, assuming uniqueness, proving strong uniqueness is a particular case of the more general problem of finding lifts of exact functors between triangulated categories. In good geometric cases, thanks to an important result by To\"en~\cite{To}, it is known that an exact functor has a lift if and only if it is of Fourier-Mukai type. Now, a celebrated theorem by Orlov~\cite{Or} states that every fully faithful exact functor $\Db(X)\to\Db(Y)$ between the bounded derived categories of coherent sheaves is of Fourier-Mukai type when $X$ and $Y$ are smooth and projective schemes over a field. Later this result was generalized in several ways---see, for example, \cite{CS,LO,CS1,Ol}). This provides several geometrically valuable instances of triangulated categories with strongly unique enhancement. However, in view of \autoref{thm:main2}, the current status is that strong uniqueness is known to hold for a much narrower class of categories than uniqueness. One reason for this is that a key ingredient, in all the known proofs of strong uniqueness mentioned above, is the presence of a so-called ample set. We do not need to define this here but, as the name suggests, an ample set is related to the possibility to pick objects that behave like ample line bundles and provide nice resolutions. Of course, even in the geometric case, it is easy to imagine situations where this cannot be achieved: when the scheme is not projective/proper or when we deal with larger categories such as the unbounded derived category $\Dq(X)$ of complexes of $\so_X$-modules with quasi-coherent cohomology.

What makes the strong uniqueness problem intriguing is the scarcity of counterexamples. Finding examples of triangulated categories with unique but not strongly unique enhancement seems a hard challenge. Indeed, even searching for exact functors---not necessarily equivalences---without a lift is a difficult task. Consider, for instance, that an influential conjecture from \cite{BLL} predicted that every exact functor $\Db(X)\to\Db(Y)$, with $X$ and $Y$ smooth and projective schemes over a field, should admit a lift. It took several years to disprove the conjecture (see \cite{RVN,V}). And coming back to our problem: so far the only known examples of unique but not strongly unique enhancement are those provided in \cite{JM}. Although interesting, they are in a sense `pathological': as ordinary functors, the autoequivalences without a lift are simply the identity, and what makes them non-trivial is the isomorphism of commutation with the shift, which is not the identity. And it is a wide-open problem if a counterexample can be found with a geometric triangulated category, as in \autoref{thm:main2}.

\subsection*{The result}

Back to the positive side of the story: a little is known, there have been
a few special cases in which strong uniqueness was proved without the use of ample sets. In each of these the proof is based on exploiting the particularly simple structure of the triangulated category under consideration; see \cite{AM,CY,Lo}. This paper goes in the same direction, proving strong uniqueness in some new cases, thanks to the fact that we can obtain a sufficiently nice description of every object in the triangulated category. An interesting feature of our results is that, for the first time to our knowledge, they apply not only to bounded derived categories. We will prove the following:

\begin{thmInt}\label{mainthm}
The following triangulated categories have a strongly unique enhancement:
\begin{itemize}
\item[{\rm (a)}] $\Da(\cA)$, where $?=b,+,-,\emptyset$ and $\cA$ is a hereditary abelian category;
\item[{\rm (b)}] $\Da(\Mod{\kep})$, for $?=sb,b,+$, and $\Dm(\fMod{\kep})$, where $\Mod{\kep}$ is the category of modules over the ring of dual numbers over a field $\kk$ and $\fMod{\kep}$ is its full subcategory of finitely generated modules.
\end{itemize}
\end{thmInt}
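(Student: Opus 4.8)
The strategy rests on the reduction recalled in the introduction: since uniqueness of the enhancement is already available in all the cases of the statement (by \autoref{thm:main2}), it suffices to show that every exact autoequivalence $\fG$ of each of the listed triangulated categories lifts to an automorphism in $\Hqe$ of its enhancement. I would fix once and for all the obvious dg enhancement built from complexes over the relevant abelian category, so that the task becomes: given $\fG$, produce a dg (bi)module inducing it.

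For part (a) the engine is the defining vanishing $\Ext^i_\cA(\farg,\farg)=0$ for $i\ge 2$ of a hereditary abelian category $\cA$. This has two effects. First, every object of $\Da(\cA)$ is (non-canonically) isomorphic to the direct sum of its shifted cohomology objects, $\bigoplus_{n\in\ZZ}H^n(\farg)[-n]$; this reduces the structure of $\Da(\cA)$ to that of $\cA$ together with its $\Hom$- and $\Ext^1$-groups. Second, the same vanishing kills all possible higher $A_\infty$-products, so the full dg subcategory of the enhancement spanned by the objects of $\cA$ is formal, and $\Da_{\mathrm{dg}}(\cA)$ is quasi-equivalent to the pretriangulated hull of that formal model. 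Together these mean that the usual obstruction theory for lifting an exact functor out of $\Da(\cA)$ has all its obstructions in the vanishing higher $\Ext$-groups, so that every exact functor $\Da(\cA)\to\cT$, with $\cT$ admitting a dg enhancement, is of Fourier--Mukai type. Applying this to an autoequivalence $\fG$ (and its quasi-inverse) provides the lift; the bounded, bounded below, bounded above and unbounded versions are handled at once, formality being inherited in each.

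For part (b) the dual numbers $\kep$ are not hereditary, so formality is lost and a concrete analysis is needed. Here I would use that $\kep$ is a self-injective (Frobenius) Nakayama algebra: free, projective and injective $\kep$-modules coincide, and the only indecomposable $\kep$-modules are $\kk$ and $\kep$. Replacing a complex by an eventually periodic free resolution and unfolding the $\varepsilon$-action organises its $\kk$-linear content into a sequence of vector spaces; this relates $\Dsb(\Mod{\kep})$, $\Db(\Mod{\kep})$, $\Du(\Mod{\kep})$ and $\Dm(\fMod{\kep})$ to the (hereditary) category of sequences of $\kk$-vector spaces, and in particular yields a complete classification of their indecomposable objects --- suitable ``string/interval'' objects --- and of the morphisms between them. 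An exact autoequivalence $\fG$ must then permute these indecomposables compatibly with the combinatorics; one checks that the group of such symmetries is generated by the shift together with a short, explicit list of autoequivalences, each of which is manifestly induced at the dg level, and transporting the lift back through the comparison finishes the argument.

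The step I expect to be the main obstacle is part (b): making the passage to sequences of vector spaces precise and compatible with enhancements on each of the bounded below and bounded above variants --- not just on the finite-dimensional bounded part, where everything collapses to finite linear algebra --- and then genuinely ruling out ``exotic'' autoequivalences of these larger categories, rather than merely of the bounded ones. By comparison, once formality is in place, part (a) should be relatively smooth.
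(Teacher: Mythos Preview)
Your outline is in the right spirit but diverges from the paper in both parts, and in each case the divergence hides real work.

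\medskip

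\textbf{Part (a).} The paper does \emph{not} argue via formality or obstruction theory. Instead it uses the specific machinery of \cite{CNS}: starting from the isomorphism $f\colon\cc_1\to\cc_2$ in $\Hqe$ guaranteed by uniqueness, \cite[Section~5]{CNS} lets one choose $f$ so that the induced autoequivalence $\fG$ of $\Da(\cA)$ is already isomorphic to the identity on the full subcategory $\Va(\cA)$ of complexes with zero differential. Since $\cA$ is hereditary, this subcategory is equivalent to all of $\Da(\cA)$, and a further application of \cite[Proposition~5.2]{CNS} shows that the isomorphism $\theta$ is actually natural with respect to \emph{all} morphisms in $\Da(\cA)$, not just those coming from $\Ka(\cA)$. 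Your route, by contrast, is essentially that of \cite{Lo,CY}: formality of the enhancement plus some lifting argument. That approach is known to give the case $?=b$, but your assertion that ``every exact functor $\Da(\cA)\to\cT$ is of Fourier--Mukai type'' is far stronger than what formality alone provides, and you give no mechanism for it; standard obstruction-theoretic lifting results require finiteness or smoothness hypotheses that are not available here, especially for $?=-,\emptyset$. The paper's argument sidesteps this entirely by never trying to lift an arbitrary functor.

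\medskip

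\textbf{Part (b).} Your broad plan---pass to sequences of vector spaces, classify indecomposables---matches the paper's setup. But the paper does \emph{not} proceed by determining the full group of exact autoequivalences and checking that each generator lifts. Instead it observes (\autoref{rest}) that any exact autoequivalence of $\SQpa$ restricts to one of the small category $\SQpbfg$, invokes \cite[Theorem~7.1]{CY} to lift that restriction, and is thereby reduced to showing: if $\fF\rest{\SQpbfg}\cong\id$, then $\fF\cong\id$. This last step is the heart of the matter, and for $?=sb,b,+$ and for $\Dm(\fMod{\kep})$ it is handled by a coproduct/compactness argument: every object is a coproduct of objects of $\SQpbfg$, those objects are compact, and this forces $\fF$ to be the identity on all morphisms. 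Your proposed ``short, explicit list of generating autoequivalences'' is both vaguer and harder---you would need to prove that the list is complete, which amounts to the same reduction but without the leverage of the known result for $\SQpbfg$. The step you flag as the main obstacle is indeed where the content lies, and the paper's solution is precisely this bootstrap from the compact objects, not a classification of symmetries.
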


Some comments are in order here. As for item (a), recall that an abelian category is \emph{hereditary} if $\Ext^2(A,B)=0$, for all $A,B\in\cA$. We should also remind the reader that the case $?=b$ was already proved in \cite{Lo} (improving a result in \cite{CY}). Our proof, which works uniformly for every choice of $?$, is an easy application of techniques developed in \cite{CNS}.

Moving to item (b), we first point out that the meaning of the unusual superscript $sb$ is explained at the beginning of \autoref{subsec:derived}. Then we should note that the strong uniqueness of the enhancements, for the `small' triangulated categories $\Db(\fMod{\kep})$ and $\Perf{\kep}$ (the category of perfect complexes over $\kep$), was proved in \cite{AM,CY} (see also \cite{KYZ}). All these papers tackle the problem by classifying indecomposable objects and proving that every object is completely decomposable (meaning a direct sum of indecomposables). As the same approach is more problematic for the `big' derived categories $\Da(\Mod{\kep})$ we are interested in, we need to develop tools allowing us to deal efficiently with the latter categories. The crucial observation is that $\D(\Mod{\kep})$ is closely related to a subcategory of the category $\Sq=\Sq(\Mod{\kk})$, of \emph{sequences} of $\kk$-vector spaces. To classify the indecomposable objects, and to check which objects are completely decomposable, one can replace $\Da(\Mod{\kep})$ or $\Dm(\fMod{\kep})$ with the corresponding subcategory of $\Sq$. It turns out that everything works as expected when $?=sb,b,+$. However, due to the presence of non completely decomposable objects, understanding if strong uniqueness of enhancement holds for $\Da(\Mod{\kep})$ when $?=-,\emptyset$ is left as a stimulating open problem of linear algebra.

\subsection*{Structure of the paper}

As we mentioned above, the (small) derived categories,
of modules over the dual numbers, have been dealt with by means of a careful analysis of the indecomposable objects, and of the morphisms between them. Moving from $\Db(\fMod{\kep})$ to $\Da(\Mod{\kep})$ increases the complexity of the problem, and to approach this we develop the theory of categories of sequences---first in general, then in the special case of vector spaces. This is the content of \autoref{sequences}. Note that the indecomposable objects are classified in \autoref{subsec:indec}.

This material seems general, and might appear unrelated to the problem at hand. In \autoref{dercatdual} we explain the connection with the study of the homotopy and derived categories of modules over the dual numbers. With this done, the stage is set for the proof of \autoref{mainthm} (b), which is contained in \autoref{stronguniq}.

Of course, the proof requires also some machinery from the theory of dg categories, which is briefly recalled in \autoref{sec:enh}. In particular, the precise definitions of uniqueness and strong uniqueness for enhancements is discussed, together with \autoref{thm:main2} from \cite{CNS}. The proof of \autoref{mainthm} (a) is contained in \autoref{subsec:stronguniqher}. Beyond using some techniques developed in \cite{CNS}, it is based on the classical fact that every object in the derived category of a hereditary category splits as the coproduct of (the shifts of) its cohomologies. Thus we are back to our underlying idea that studying strong uniqueness of enhancements is easier for triangulated categories whose objects can be assembled out of simple building blocks.

In this paper is we are unable to settle the strong uniqueness of enhancements of $\Da(\Mod{\kep})$, for $?=\emptyset,-$. In \autoref{open} we give some partial results and list a few open problems.

\section{The category of sequences}\label{sequences}

In this section $\kk$ is an arbitrary commutative ring and $\ca$ is a $\kk$-linear category. In \autoref{subsec:defseq} we define two categories of sequences in $\ca$, and describe in detail the morphisms inside each of them. In \autoref{subsec:seqab} we assume that $\ca$ is abelian, and in \autoref{subsec:vectsp} and \autoref{subsec:indec} we further specialize to the case where $\kk$ is a field and $\ca=\Mod{\kk}$.

\subsection{Definitions}\label{subsec:defseq}

Let $\Sq(\ca)$ be the $\kk$-linear category of sequences in $\ca$. Explicitly, an object $\sq{X}$ of $\Sq(\ca)$ is given by objects $X^i\in\ca$ and morphisms $\sm^i=\sm[\sq{X}]^i\colon X^i\to X^{i+1}$ of $\ca$, for every $i\in\ZZ$. A morphism $\sq{f}\colon\sq{X}\to\sq{Y}$ in $\Sq(\ca)$ is given by morphisms $f^i\colon X^i\to Y^i$ of $\ca$ such that $\sm[\sq{Y}]^i\comp f^i=f^{i+1}\comp\sm[\sq{X}]^i$, for every $i\in\ZZ$. 

\begin{remark}\label{rmK:complseq}
In the above definition, we do not require $\sm^{i+1}\comp\sm^i=0$. But since this might happen, we recover the usual category of complexes $\C(\ca)$ as a full subcategory of $\Sq(\ca)$.
\end{remark}

Note that the usual shift functors $\sh[n]{}$ (for $n\in\ZZ$) extend from $\C(\ca)$ to $\Sq(\ca)$ with the same definition: $\sh[n]{\sq{X}}^i:=X^{n+i}$ and $\sm[{\sh[n]{\sq{X}}}]^i:=(-1)^n\sm[\sq{X}]^{n+i}$ on objects, whereas $\sh[n]{\sq{f}}^i:=f^{n+i}$ on morphisms.

\begin{remark}\label{Homsq}
As in the case of complexes,
\[
\Hom_{\Sq(\ca)}(\sq{X},\sq{Y})=\ker(\sm[\SHom_{\Sq(\ca)}(\sq{X},\sq{Y})]^0),
\]
where $\SHom_{\Sq(\ca)}(\sq{X},\sq{Y})\in\Sq(\Mod{\kk})$ is defined by
\begin{gather*}
\Hom^n_{\Sq(\ca)}(\sq{X},\sq{Y}):=\prod_{i\in\ZZ}\Hom_\ca(X^i,Y^{n+i}), \\
\sm[\SHom_{\Sq(\ca)}(\sq{X},\sq{Y})]^n(\sq{f}):=(\sm[\sq{Y}]^{n+i}\comp f^i-(-1)^nf^{i+1}\comp\sm[\sq{X}]^i)_{i\in\ZZ}
\end{gather*}
for every $n\in\ZZ$. The graded Leibniz rule holds: if $\sq{f}\in\Hom^m_{\Sq(\ca)}(\sq{X},\sq{Y})$ and $\sq{g}\in\Hom^n_{\Sq(\ca)}(\sq{Y},\sq{Z})$, then $\sq{g}\comp\sq{f}\in\Hom^{m+n}_{\Sq(\ca)}(\sq{X},\sq{Z})$ and
\[
\sm^{m+n}(\sq{g}\comp\sq{f})=\sm^n(\sq{g})\comp\sq{f}+(-1)^n\sq{g}\comp\sm^m(\sq{f}).
\]
It follows that, if $\sq{f}\in\ker(\sm^m)$ and $\sq{g}\in\im(\sm^n)$ or $\sq{f}\in\im(\sm^m)$ and $\sq{g}\in\ker(\sm^n)$, then $\sq{g}\comp\sq{f}\in\im(\sm^{m+n})$.
\end{remark}

For every $\sq{X},\sq{Y}\in\Sq(\ca)$ we set
\[
\Homep_{\Sq(\ca)}(\sq{X},\sq{Y}):=\cok(\sm[\SHom_{\Sq(\ca)}(\sq{X},\sq{Y})]^{-1}).
\]
Then we can consider a ($\kk$-linear) category $\SQ(\ca)$ (whose usefulness will be clear later) with the same objects as $\Sq(\ca)$ and whose morphisms are defined by
\[
\Hom_{\SQ(\ca)}(\sq{X},\sq{Y}):=\Hom_{\Sq(\ca)}(\sq{X},\sq{Y})\oplus\Homep_{\Sq(\ca)}(\sq{X},\sq{Y}).
\]
We can write a morphism $f\in\Hom_{\SQ(\ca)}(\sq{X},\sq{Y})$ as $f=\sq{f}_1+[\sq{f}_\ep]$, where $\sq{f}_1\in\Hom_{\Sq(\ca)}(\sq{X},\sq{Y})$ and $[\sq{f}_\ep]$ is the class in $\Homep_{\Sq(\ca)}(\sq{X},\sq{Y})$ of $\sq{f}_\ep\in\Hom^0_{\Sq(\ca)}(\sq{X},\sq{Y})$. Sometimes we will say that $f$ is of type $1$ (respectively, of type $\ep$) if $f=\sq{f}_1$ (respectively, $f=[\sq{f}_\ep]$).

Similarly, if $g=\sq{g}_1+[\sq{g}_\ep]\in\Hom_{\SQ(\ca)}(\sq{Y},\sq{Z})$, then the composition (which is well defined thanks to \autoref{Homsq}) is given by
\[
g\comp f:=\sq{g}_1\comp\sq{f}_1+[\sq{g}_1\comp\sq{f}_\ep+\sq{g}_\ep\comp\sq{f}_1].
\]
Note that the shift functors extend in an obvious way from $\Sq$ to $\SQ$.

\begin{remark}\label{essbij}
There are natural ($\kk$-linear) functors $\fin\colon\Sq(\ca)\to\SQ(\ca)$ and $\fpr\colon\SQ(\ca)\to\Sq(\ca)$, which are the identity on objects and which on morphisms are given, respectively, by the inclusion and the projection onto the first summand. It is clear that $\fpr$ is full and essentially surjective. Moreover, it is very easy to see that $\fpr$ reflects isomorphisms. As a full functor that reflects isomorphisms is essentially injective, $\fpr$ is essentially bijective.
\end{remark}

\subsection{The case of abelian categories}\label{subsec:seqab}

In the special case where $\ca$ is an abelian category we can first prove the following.

\begin{lem}\label{lem:abGroth}
If $\ca$ is an abelian (resp.\ a Grothendieck abelian) category, then $\Sq(\cA)$ is an abelian (resp.\ a Grothendieck abelian) category.
\end{lem}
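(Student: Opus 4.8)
The plan is to observe that $\Sq(\ca)$ is a functor category and then invoke the standard structure theory for such categories. First I would note that a sequence $\sq{X}$ in $\ca$ — objects $X^i$ together with morphisms $\sm^i\colon X^i\to X^{i+1}$ — is precisely a functor from the poset $(\ZZ,\le)$, viewed as a small category, to $\ca$; and a morphism $\sq{f}\colon\sq{X}\to\sq{Y}$ in $\Sq(\ca)$, with the commutativity condition $\sm[\sq{Y}]^i\comp f^i=f^{i+1}\comp\sm[\sq{X}]^i$, is exactly a natural transformation. Hence $\Sq(\ca)=\mathrm{Fun}(\ZZ,\ca)$ as $\kk$-linear categories. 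It is classical that for any small category $I$ and any abelian category $\ca$, the functor category $\mathrm{Fun}(I,\ca)$ is again abelian, with kernels, cokernels, and (co)products all computed objectwise (i.e.\ index by index); exactness of a sequence in $\mathrm{Fun}(I,\ca)$ is likewise tested objectwise. This immediately gives the first assertion.

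For the Grothendieck case I would check the three defining properties in turn, using the objectwise description. Arbitrary coproducts exist because $\ca$ has them and they are computed objectwise. Filtered colimits are exact in $\mathrm{Fun}(\ZZ,\ca)$ because a filtered colimit is again computed objectwise and filtered colimits are exact in $\ca$ (together with the objectwise test for exactness). Finally, a generator for $\mathrm{Fun}(\ZZ,\ca)$ is obtained from a generator $G$ of $\ca$ by taking the coproduct over $n\in\ZZ$ of the representable-type sequences $P_n\otimes G$, where $P_n$ is the sequence with $(P_n)^i=\kk$ for $i\ge n$, $(P_n)^i=0$ for $i<n$, and identity transition maps for $i\ge n$ (so that $\Hom_{\Sq(\ca)}(P_n\otimes G,\sq{Y})\iso\Hom_\ca(G,Y^n)$, exhibiting the Yoneda-type adjunction). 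Since $G$ detects nonzero maps in $\ca$ and the $P_n$ detect all indices, the coproduct $\bigoplus_n P_n\otimes G$ is a generator. This shows $\Sq(\ca)$ is Grothendieck abelian.

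I do not expect a genuine obstacle here: the whole content is the identification $\Sq(\ca)\iso\mathrm{Fun}(\ZZ,\ca)$ plus standard facts about diagram categories, and the only mild care needed is writing down an explicit generator rather than quoting it abstractly. If one prefers to avoid citing the functor-category folklore wholesale, the alternative is to verify the axioms by hand — constructing kernels and cokernels objectwise and checking the commutativity squares, then verifying $AB5$ and exhibiting the generator as above — but this is entirely routine and I would keep it brief.
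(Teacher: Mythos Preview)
Your approach is exactly the paper's: the authors simply observe that $\Sq(\ca)$ is the functor category $\mathrm{Fun}(\ZZ,\ca)$ with $\ZZ$ viewed as a poset, and declare the lemma an easy consequence of this identification. You have merely expanded on what they leave implicit, in particular by writing down an explicit generator; there is nothing to correct.
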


\begin{proof}
This is an easy consequence of the fact that $\Sq(\ca)$ can be identified with the category of functors from $\ZZ$ (regarded as an ordered set, hence as a category) to the (Grothendieck) abelian category $\ca$.
\end{proof}

Recall that an abelian category $\cb$ is a \emph{Grothendieck category} if
\begin{itemize}
\item It has small coproducts;
\item Filtered colimits of exact sequences are exact in $\cb$;
\item It has a set of generators, meaning a set $\cs$ of objects in $\cb$ such that, for any $C\in\cb$ there is an epimorphism $S\epi C$, where $S$ is a coproduct of objects in $\cs$.
\end{itemize} 

We can also prove the following rather technical but useful result.

\begin{prop}\label{Ext1}
  If $\ca$ is an abelian category, then for every $\sq{X},\sq{Y}\in\Sq(\ca)$ there is an injective $\kk$-linear map, natural in the two arguments
  $\sq{X},\sq{Y}\in\Sq(\ca)$,
\[
\Homep_{\Sq(\ca)}(\sq{X},\sq{Y})\to\Ext_{\Sq(\ca)}^1(\sq{X},\sh[-1]{\sq{Y}}).
\]
Moreover, this map is an isomorphism if $\Ext_\ca^1(X^i,Y^{i-1})=0$ for every $i\in\ZZ$.
\end{prop}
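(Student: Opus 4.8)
The plan is to construct the map explicitly at the level of extension classes. Given a class $[\sq{f}_\ep]$ in $\Homep_{\Sq(\ca)}(\sq{X},\sq{Y}) = \cok(\sm[\SHom_{\Sq(\ca)}(\sq{X},\sq{Y})]^{-1})$, represented by $\sq{f}_\ep = (f_\ep^i)_{i\in\ZZ}$ with $f_\ep^i\colon X^i\to Y^i$, I would build a short exact sequence $0 \to \sh[-1]{\sq{Y}} \to \sq{Z} \to \sq{X} \to 0$ in $\Sq(\ca)$ by setting $Z^i := X^i \oplus Y^{i-1}$, with differential $\sm[\sq{Z}]^i := \begin{pmatrix} \sm[\sq{X}]^i & 0 \\ f_\ep^i & -\sm[\sq{Y}]^{i-1} \end{pmatrix}$ (the sign chosen so that $\sh[-1]{\sq{Y}}$ has differential $-\sm[\sq{Y}]$, matching the shift convention); one checks directly that this does define an object of $\Sq(\ca)$ and that the obvious inclusion and projection are morphisms in $\Sq(\ca)$. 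First I would verify that changing the representative $\sq{f}_\ep$ within its class — i.e.\ adding $\sm^{-1}(\sq{g})$ for some $\sq{g}\in\Hom^{-1}_{\Sq(\ca)}(\sq{X},\sq{Y})$ — produces an isomorphic extension, via the automorphism of $\sq{Z}$ given by $\begin{pmatrix} \id & 0 \\ \sq{g} & \id\end{pmatrix}$; this is the computation that shows the assignment $[\sq{f}_\ep] \mapsto [\sq{Z}]$ is well defined into $\Ext^1_{\Sq(\ca)}(\sq{X}, \sh[-1]{\sq{Y}})$. Linearity and naturality in both arguments are then routine Baer-sum bookkeeping.

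For injectivity, I would show that if the extension $\sq{Z}$ splits in $\Sq(\ca)$, then a splitting is a family of morphisms $s^i\colon X^i \to Z^i$, i.e.\ $s^i = \begin{pmatrix}\id \\ -\sq{g}^i\end{pmatrix}$ for some $\sq{g}^i\colon X^i\to Y^{i-1}$, compatible with the differentials; unwinding the compatibility condition gives exactly $f_\ep^i = \sm[\sq{Y}]^{i-1}\comp\sq{g}^i - \sq{g}^{i+1}\comp\sm[\sq{X}]^i$, i.e.\ $\sq{f}_\ep = \sm[\SHom]^{-1}(\sq{g})$ up to the sign convention, so $[\sq{f}_\ep] = 0$. (Here the graded Leibniz formalism from \autoref{Homsq} makes the index/sign juggling transparent — this is where I expect the only real friction, matching the $(-1)^n$ conventions in $\sm[\SHom]^{n}$ against the shift signs.)

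For surjectivity under the hypothesis $\Ext^1_\ca(X^i, Y^{i-1}) = 0$ for all $i$, I would start from an arbitrary extension $0 \to \sh[-1]{\sq{Y}} \to \sq{W} \to \sq{X} \to 0$. Levelwise this is a short exact sequence $0 \to Y^{i-1} \to W^i \to X^i \to 0$ in $\ca$, which splits because $\Ext^1_\ca(X^i,Y^{i-1}) = 0$; choosing splittings identifies $W^i \iso X^i \oplus Y^{i-1}$ compatibly with the two maps, and then the differential $\sm[\sq{W}]^i$, being a morphism of short exact sequences over $\sm[\sq{X}]^i$ and $-\sm[\sq{Y}]^{i-1}$, must be lower-triangular of the form $\begin{pmatrix}\sm[\sq{X}]^i & 0 \\ f_\ep^i & -\sm[\sq{Y}]^{i-1}\end{pmatrix}$ for some $f_\ep^i\colon X^i\to Y^i$. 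This exhibits $\sq{W}$ as the extension associated to $[\sq{f}_\ep]$, proving surjectivity. The main obstacle is purely notational — keeping the shift sign conventions and the $(-1)^n$ in the Leibniz rule consistent throughout — rather than conceptual; the underlying content is the standard identification of $\Homep$ with a group of ``self-extensions up to shift'', exactly as for the category of complexes.
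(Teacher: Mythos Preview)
Your proposal is correct and matches the paper's proof essentially line for line: the same explicit ``mapping-cone'' extension $Z^i=X^i\oplus Y^{i-1}$, the same splitting analysis for injectivity, and the same levelwise splitting under the $\Ext^1_\ca$-vanishing hypothesis for surjectivity. The only cosmetic difference is that the paper packages well-definedness and injectivity together into the single statement ``$E_{\sq{f}}$ splits iff $\sq{f}\in\im(\sm^{-1})$'', whereas you verify them separately; your anticipated sign friction is real but harmless (the paper uses $-f^i$ where you use $+f_\ep^i$).
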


\begin{proof}
Given $\sq{f}\in\Hom_{\Sq(\ca)}^0(\sq{X},\sq{Y})$, consider $\sq{C}\in\Sq(\ca)$ defined by\footnote{This is just the obvious generalization of the notion of mapping cone (up to shift) from complexes to sequences.} $C^i:=X^i\oplus Y^{i-1}$ and
\[
\sm[\sq{C}]^i=
\begin{pmatrix}
\sm[\sq{X}]^i & 0 \\
-f^i & -\sm[\sq{Y}]^{i-1 }
\end{pmatrix}
\colon C^i=X^i\oplus Y^{i-1}\to C^{i+1}=X^{i+1}\oplus Y^i.
\]
Clearly there is a short exact sequence in $\Sq(\ca)$
\[
E_{\sq{f}}\colon0\to\sh[-1]{\sq{Y}}\to\sq{C}\to\sq{X}\to0
\]
where the maps are the natural inclusion and projection. Thus we obtain a map
\[
\Hom_{\Sq(\ca)}^0(\sq{X},\sq{Y})\to\Ext_{\Sq(\ca)}^1(\sq{X},\sh[-1]{\sq{Y}}),
\]
which sends $\sq{f}$ to the isomorphism class of $E_{\sq{f}}$.

We leave to the reader the check that the above map is natural and $\kk$-linear.

In order to deduce the first statement it is then enough to show that $E_{\sq{f}}$ splits if and only if $\sq{f}\in\im(\sm[\SHom_{\Sq(\ca)}(\sq{X},\sq{Y})]^{-1})$. Indeed, $E_{\sq{f}}$ splits if and only if there exists a morphism (necessarily an isomorphism) $\sq{g}\in\Hom_{\Sq(\ca)}(\sq{X}\oplus\sh[-1]{\sq{Y}},\sq{C})$ such that the diagram
\[
\xymatrix{
 & & \sq{X}\oplus\sh[-1]{\sq{Y}} \ar[dd]^{\sq{g}} \ar[drr] \\
\sh[-1]{\sq{Y}} \ar[rru] \ar[rrd] & & & & \sq{X} \\
 & & \sq{C} \ar[rru] 
}
\]
commutes. Now, $\sq{g}\in\Hom_{\Sq(\ca)}^0(\sq{X}\oplus\sh[-1]{\sq{Y}},\sq{C})$ makes the diagram commute if and only if there exists $h^i\in\Hom_\ca(X^i,Y^{i-1})$ such that
\[
g^i=
\begin{pmatrix}
\id_{X^i} & 0 \\
h^i & \id_{Y^{i-1}}
\end{pmatrix}
\colon X^i\oplus Y^{i-1}\to C^i=X^i\oplus Y^{i-1}
\]
for every $i\in\ZZ$. On the other hand, when $\sq{g}$ is of this form, it is a morphism of $\Sq(\ca)$ if and only if $-f^i=\sm[\sq{Y}]^{i-1}\comp h^i+h^{i+1}\comp\sm[\sq{X}]^i$ for every $i\in\ZZ$. By definition, this last condition is equivalent to $\sq{f}=\sm[\SHom_{\Sq(\ca)}(\sq{X},\sq{Y})]^{-1}(-\sq{h})$.

As for the last statement, if $\Ext_\ca^1(X^i,Y^{i-1})=0$ for every $i\in\ZZ$, let $0\to\sh[-1]{\sq{Y}}\to\sq{Z}\to\sq{X}\to0$ be a short exact sequence in $\Sq(\ca)$. Up to isomorphism, we can assume that in each degree $i$ it is given by the split exact sequence $0\to Y^{i-1}\to X^i\oplus Y^{i-1}\to X^i\to0$. This easily implies that it coincides with $E_{\sq{f}}$ for some $\sq{f}\in\Hom_{\Sq(\ca)}^0(\sq{X},\sq{Y})$.
\end{proof}

\subsection{The special case of vector spaces}\label{subsec:vectsp}

Now we assume that $\kk$ is a field and set $\Sq:=\Sq(\Mod{\kk})$ and $\SQ:=\SQ(\Mod{\kk})$. We know from \autoref{lem:abGroth} that $\Sq$ is a Grothendieck abelian category. We investigate additional properties  of such a category here. In particular we describe its injective (and, in part, projective) objects.

Let us start by setting some notation. Given $a\in\ZZ\cup\{-\infty\}$ and $b\in\ZZ\cup\{\infty\}$ with $a\le b$, let $\sq{S}_{a,b}$ be the object of $\Sq$ defined by
\[
S_{a,b}^i:=
\begin{cases}
\kk & \text{if $a\le i\le b$} \\
0 & \text{otherwise}
\end{cases}
\qquad
\sm[\sq{S}_{a,b}]^i:=(-1)^i\id_\kk \quad \text{if $a\le i<b$}
\]
Unlike $a$ and $b$, our notation is that the letters $i,j,m,n$ take values in $\ZZ$.

\begin{remark}\label{isosign}
The choice of signs ensures that $\sh[n]{\sq{S}_{a,b}}=\sq{S}_{a-n,b-n}$ for every $n\in\ZZ$. For most purposes it is however irrelevant, since with a different choice one gets isomorphic objects. More generally, it is immediate to see that $\sq{V}\iso\sq{W}$ in $\Sq$ if $V^i=W^i=0$ for $i<a$ or $i>b$, $\sm[\sq{V}]^i$ and $\sm[\sq{W}]^i$ are isomorphisms for $a\le i<b$ and $V^i\iso W^i$ for some (hence for all) $i$ such that  $a\le i\le b$. 
\end{remark}

Given an object $\sq{V}$ of $\Sq$, we set $\sm^{i,j}=\sm[\sq{V}]^{i,j}:=\sm^{j-1}\comp\cdots\comp\sm^i\colon V^i\to V^j$ for $i\le j$; in particular, $\sm^{i,i}=\id_{V^i}$ and $\sm^{i,i+1}=\sm^i$. As a matter of notation we define also the following subspaces of $V^i$ for $m\le i\le n$:
\[
I^i_m=I^i_m(\sq{V}):=\im(\sm^{m,i}),\qquad K^i_n=K^i_n(\sq{V}):=\ker(\sm^{i,n}).
\]
We set moreover $K^i_\infty:=\cup_{n\ge i}K^i_n$.

\begin{remark}\label{Homindec}
For $n\le b$ and for every $\sq{V}\in\Sq$ there are natural isomorphisms in $\Mod{\kk}$
\[
\Hom_\Sq(\sq{S}_{n,b},\sq{V})\iso
\begin{cases}
V^n & \text{if $b=\infty$} \\
K^n_{b+1}(\sq{V}) & \text{if $b<\infty$}.
\end{cases}
\]
On the other hand, setting
\[
\compat{\sq{V}}:=\{(x_i)_{i\in\ZZ}\in\prod_{i\in\ZZ}V^i\st x_{i+1}=(-1)^i\sm[\sq{V}]^i(x_i)\ \all i\in\ZZ\},
\]
there is also a natural isomorphism in $\Mod{\kk}$
\[
\Hom_\Sq(\sq{S}_{-\infty,b},\sq{V})\iso
\begin{cases}
\compat{\sq{V}} & \text{if $b=\infty$} \\
\{(x_i)_{i\in\ZZ}\in\compat{\sq{V}}\st x_{b+1}=0\} & \text{if $b<\infty$}.
\end{cases}
\]
Observe that, defining $\inj{V}^j$ (for every $j\in\ZZ$) to be the image of $\compat{\sq{V}}$ through the projection $\prod_{i\in\ZZ}V^i\to V^j$, we obtain a subobject $\sq{\inj{V}}$ of $\sq{V}$ such that the natural map $\Hom_\Sq(\sq{S}_{-\infty,b},\sq{\inj{V}})\to\Hom_\Sq(\sq{S}_{-\infty,b},\sq{V})$ is an isomorphism.
\end{remark}

\begin{remark}\label{locallynoetherian}
In \autoref{Homindec} we noted
that, for every $n\in\ZZ$, we have a canonical
isomorphism, natural in $\sq{V}\in\Sq$ 
\[
\Hom_\Sq(\sq{S}_{n,\infty},\sq{V})\iso
V^n\ . 
\]
From this we learn two things:
\begin{enumerate}
\item
The functor $\Hom_\Sq(\sq{S}_{n,\infty},-)$
commutes with colimits. In particular it respects cokernels,
that is epimorphisms in the abelian category $\Sq$.
\item
The objects $\sq{S}_{n,\infty}$ \emph{generate}
the category $\Sq$, meaning that
for every $\sq{V}\in\Sq$ there exists
an epimorphism from a coproduct of
objects $\sq{S}_{n,\infty}$ to $\sq{V}$.
\setcounter{enumiv}{\value{enumi}}
\end{enumerate}
In classical terminology, the objects
$\sq{S}_{n,\infty}\in\Sq$ are a set of \emph{finitely presented,
projective generators.}

But we also observe that the only subobjects of
the object $\sq{S}_{m,\infty}$ are the objects
$\sq{S}_{n,\infty}\in\Sq$ with $m\leq n$, and
any increasing sequence of subobjects therefore
terminates. This means
\begin{enumerate}
\setcounter{enumi}{\value{enumiv}}
\item
The objects $\sq{S}_{n,\infty}$
are all \emph{noetherian} in 
the abelian category $\Sq$.
\end{enumerate}
In classical terminology: the category
$\Sq$ is a \emph{locally noetherian abelian category.}
\end{remark}

\begin{prop}\label{injchar}
An object $\sq{W}$ of $\Sq$ is injective if and only if $\sm[\sq{W}]^i$ is surjective for every $i\in\ZZ$.
\end{prop}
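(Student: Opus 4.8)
The proof rests on the projective generators $\sq{S}_{n,\infty}$ of \autoref{locallynoetherian} and on the natural isomorphism $\Hom_\Sq(\sq{S}_{n,\infty},\sq{V})\iso V^n$ of \autoref{Homindec}. For $m\ge n$ there is a monomorphism $\sq{S}_{m,\infty}\mono\sq{S}_{n,\infty}$ in $\Sq$, given by isomorphisms in every degree where both objects are nonzero (its cokernel is $\sq{S}_{n,m-1}$). A direct check shows that, under the above identification, the induced map $\Hom_\Sq(\sq{S}_{n,\infty},\sq{W})\to\Hom_\Sq(\sq{S}_{m,\infty},\sq{W})$ is $\pm\sm[\sq{W}]^{n,m}\colon W^n\to W^m$; in particular, for $m=n+1$ it is $\pm\sm[\sq{W}]^n$.

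\emph{Necessity.} If $\sq{W}$ is injective, apply the defining property of injective objects to the monomorphism $\sq{S}_{i+1,\infty}\mono\sq{S}_{i,\infty}$: the restriction map $\Hom_\Sq(\sq{S}_{i,\infty},\sq{W})\to\Hom_\Sq(\sq{S}_{i+1,\infty},\sq{W})$ is surjective. By the previous paragraph this says exactly that $\sm[\sq{W}]^i$ is surjective, for every $i\in\ZZ$. (Equivalently, one may use the long exact sequence of $\Hom_\Sq(-,\sq{W})$ applied to $0\to\sq{S}_{i+1,\infty}\to\sq{S}_{i,\infty}\to\sq{S}_{i,i}\to0$ and the projectivity of $\sq{S}_{i,\infty}$.)

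\emph{Sufficiency.} Assume every $\sm[\sq{W}]^i$ is surjective. Since $\{\sq{S}_{n,\infty}\}_{n\in\ZZ}$ is a generating set (\autoref{locallynoetherian}), by the Baer criterion it suffices to show that every morphism $\sq{U}\to\sq{W}$ from a subobject $\sq{U}\subseteq\sq{S}_{n,\infty}$ extends to $\sq{S}_{n,\infty}$. By \autoref{locallynoetherian} such a $\sq{U}$ is either $0$, where there is nothing to prove, or $\sq{S}_{m,\infty}$ for some $m\ge n$; in the latter case, by the first paragraph the extension problem amounts to lifting a given element of $W^m\iso\Hom_\Sq(\sq{S}_{m,\infty},\sq{W})$ along $\pm\sm[\sq{W}]^{n,m}\colon W^n\to W^m$, which is possible because $\sm[\sq{W}]^{n,m}$ is a composition of surjections and hence surjective. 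Thus $\sq{W}$ is injective.

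The only point requiring a little care is the form of the Baer criterion used above, valid for a generating set and not just a single generator; this is standard and can simply be quoted, but for completeness the proof is the usual Zorn's lemma argument: given a monomorphism $\sq{A}\mono\sq{B}$ and $\sq{f}\colon\sq{A}\to\sq{W}$, pick a maximal partial extension $\sq{f}^*\colon\sq{A}^*\to\sq{W}$; if $\sq{A}^*\ne\sq{B}$, choose $g\colon\sq{S}_{n,\infty}\to\sq{B}$ whose image is not contained in $\sq{A}^*$, extend $\sq{f}^*\comp(g|_{g^{-1}(\sq{A}^*)})$ to $h\colon\sq{S}_{n,\infty}\to\sq{W}$ using the hypothesis, and note that $(\sq{f}^*,h)\colon\sq{A}^*\oplus\sq{S}_{n,\infty}\to\sq{W}$ annihilates the kernel of the epimorphism $\sq{A}^*\oplus\sq{S}_{n,\infty}\epi\sq{A}^*+\im g$, hence descends to an extension of $\sq{f}^*$ to $\sq{A}^*+\im g\supsetneq\sq{A}^*$, contradicting maximality.
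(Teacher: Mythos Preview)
Your proof is correct. The necessity argument is identical to the paper's. For sufficiency the two diverge slightly in packaging: the paper invokes the fact that $\Sq$ is locally noetherian (\autoref{locallynoetherian}) to reduce injectivity to the vanishing of $\Ext^1(\sq{S}_{n,b},\sq{W})$ for all $n\le b\le\infty$, and then computes this $\Ext^1$ via the two-term projective resolution $0\to\sq{S}_{b+1,\infty}\to\sq{S}_{n,\infty}\to\sq{S}_{n,b}\to0$; you instead use the Baer criterion for a generating set, reducing to the extension problem along $\sq{S}_{m,\infty}\mono\sq{S}_{n,\infty}$, and you supply the Zorn's-lemma proof of that criterion explicitly. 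The underlying computation is the same (surjectivity of $\sm[\sq{W}]^{n,m}$), and the two reduction principles are closely related, but your route is somewhat more self-contained in that it does not appeal to the locally noetherian machinery; the paper's route, on the other hand, avoids redoing the Baer--Zorn argument.
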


\begin{proof}
In the light of \autoref{Homindec} it is immediate that the $\kk$-linear map $\Hom_\Sq(\sq{S}_{i,\infty},\sq{W})\to\Hom_\Sq(\sq{S}_{i+1,\infty},\sq{W})$ induced by the natural monomorphism $\sq{S}_{i+1,\infty}\mono\sq{S}_{i,\infty}$ of $\Sq$ can be identified with $(-1)^i\sm[\sq{W}]^i\colon W^i\to W^{i+1}$, for every $i\in\ZZ$. This clearly proves that 
\begin{enumerate}
\item
The map $\sm[\sq{W}]^i$ is surjective if
and only if the functor
$\Hom_\Sq(-,\sq{W})$ takes the morphism
$\sq{S}_{i+1,\infty}\mono\sq{S}_{i,\infty}$
to an epimorphism.
\end{enumerate}
Hence if the object $\sq{W}$ is injective, then
all the maps $\sm[\sq{W}]^i$ must be epimorphisms.

Now for the converse,
assume that $\sm[\sq{W}]^i$ is surjective for every $i\in\ZZ$, and
we want to prove that
$\sq{W}$ is an injective object in $\Sq$. Here
\autoref{locallynoetherian} comes
to our aid: in the locally noetherian abelian
category $\Sq$, it suffices to prove that
$\Ext^1(\sq{A},\sq{W})=0$ vanishes for
all noetherian $\sq{A}\in\Sq$. And it even suffices
to do this where $\sq{A}$ is assumed to be a
quotient of one of our noetherian generators
$\sq{S}_{n,\infty}\in\Sq$. That is: we are reduced to proving the vanishing
of $\Ext^1(\sq{S}_{n,b},\sq{W})$, for all $n\in\ZZ$ and all $b\in[n,\infty]$.

If $b=\infty$ then the vanishing of $\Ext^1(\sq{S}_{n,b},\sq{W})$
is because the object $\sq{S}_{n,\infty}\in\Sq$ is projective;
see \autoref{locallynoetherian}(1).

Assume therefore that $b<\infty$. Then the short exact
sequence
\[\xymatrix{
0
\ar[r]
&
\sq{S}_{b+1,\infty}
\ar[r]^\alpha
&
\sq{S}_{n,\infty}
\ar[r]
&
\sq{S}_{n,b}
\ar[r]
&
0
}\]
gives a projective resolution for
$\sq{S}_{n,b}$, and applying the
functor
$\Hom(-,\sq{W})$ gives
an exact sequence
\[\xymatrix@R-10pt{
\Hom(\sq{S}_{n,\infty},,\sq{W})
\ar[r]^\alpha
&
\Hom(\sq{S}_{b+1,\infty},\sq{W})
\ar[r]^\beta
&
\Ext^1(\sq{S}_{n,b},\sq{W})
\ar[r]
&
\Ext^1(\sq{S}_{n,\infty},,\sq{W})
\ar@{=}[d] \\
& & &
0
}\]
where the vanishing is by the projectivity of
$\sq{S}_{n,\infty}$. But the map induced by
$\alpha$ is an epimorphism by (1) above, allowing
us to deduce the vanishing of
$\Ext^1(\sq{S}_{n,b},\sq{W})$.
\end{proof}

\begin{cor}\label{injpart}
For every $\sq{V}\in\Sq$ the subobject $\sq{\inj{V}}$ of $\sq{V}$ introduced in \autoref{Homindec} is the maximal injective subobject of $\sq{V}$ and is a direct summand of $\sq{V}$.
\end{cor}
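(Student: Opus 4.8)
The plan is to derive everything from the characterization of injective objects in \autoref{injchar} (an object of $\Sq$ is injective exactly when all its structure maps are surjective) together with the explicit description of $\sq{\inj{V}}$ as the image of $\compat{\sq{V}}$ under the coordinate projections, recorded in \autoref{Homindec}. Concretely there are three assertions to prove: (i) $\sq{\inj{V}}$ is an injective object of $\Sq$; (ii) being an injective subobject of $\sq{V}$, it is automatically a direct summand of $\sq{V}$; (iii) every injective subobject of $\sq{V}$ is contained in $\sq{\inj{V}}$. Granting (i)--(iii), $\sq{\inj{V}}$ is an injective subobject containing all injective subobjects, hence is the maximal one, and it splits off, which is the content of the statement.

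For (i) I would invoke \autoref{injchar} and check that each $\sm[\sq{\inj{V}}]^i\colon\inj{V}^i\to\inj{V}^{i+1}$ is surjective. This is exactly where the definition of $\inj{V}^{i+1}$ via $\compat{\sq{V}}$ does the work: given $y\in\inj{V}^{i+1}$ there is a compatible family $(x_j)_{j\in\ZZ}\in\compat{\sq{V}}$ with $x_{i+1}=y$, and the defining relation $x_{i+1}=(-1)^i\sm[\sq{V}]^i(x_i)$ exhibits $(-1)^ix_i\in\inj{V}^i$ as a preimage of $y$. Then (ii) is formal: applying the lifting property of the injective object $\sq{\inj{V}}$ to its own identity along the monomorphism $\sq{\inj{V}}\mono\sq{V}$ yields a retraction $\sq{V}\to\sq{\inj{V}}$, so the inclusion splits.

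The only step demanding a genuine (if short) argument is the maximality (iii). Let $\sq{W}\subseteq\sq{V}$ be an injective subobject; note that, $\Sq$ being the functor category from $\ZZ$ to $\Mod{\kk}$, monomorphisms are detected levelwise, so $W^i\subseteq V^i$ and $\sm[\sq{W}]^i$ is the restriction of $\sm[\sq{V}]^i$, and by \autoref{injchar} every $\sm[\sq{W}]^i$ is surjective. Given $w\in W^j$, I would construct a compatible family $(x_i)_{i\in\ZZ}$ taking values in the $W^i$ with $x_j=w$: put $x_j:=w$, define $x_{i+1}:=(-1)^i\sm[\sq{W}]^i(x_i)$ for $i\ge j$, and for $i<j$ choose (downward) some $x_i\in W^i$ with $\sm[\sq{W}]^i(x_i)=(-1)^ix_{i+1}$, using surjectivity of $\sm[\sq{W}]^i$. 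Since $\sq{W}$ is a subobject with the restricted structure maps, this family lies in $\compat{\sq{V}}$, so $w=x_j\in\inj{V}^j$; as $j$ and $w\in W^j$ were arbitrary, $\sq{W}\subseteq\sq{\inj{V}}$. I do not expect any real obstacle here: once \autoref{injchar} is available the proof is a direct unwinding of the definitions, and the only things to keep track of are the signs $(-1)^i$ appearing in $\compat{\sq{V}}$ and the levelwise nature of monomorphisms in $\Sq$.
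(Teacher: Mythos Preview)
Your proposal is correct and follows essentially the same route as the paper's own proof: both verify injectivity of $\sq{\inj{V}}$ via \autoref{injchar} by checking surjectivity of the restricted differentials from the definition of $\compat{\sq{V}}$, note that an injective subobject splits off, and establish maximality by extending any element of an injective subobject $\sq{W}$ to a compatible family using the surjectivity of the $\sm[\sq{W}]^i$. The only difference is that you spell out the sign bookkeeping and the forward/backward construction of the compatible family more explicitly than the paper does.
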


\begin{proof}
It is clear by definition that $\sm[\sq{V}]^i(\inj{V}^i)=\inj{V}^{i+1}$ for every $i\in\ZZ$, whence $\sq{\inj{V}}$ is injective by \autoref{injchar}, and an injective subobject is always a direct summand. Moreover, if $\sq{U}$ is another injective subobjet of $\sq{V}$, then, again by \autoref{injchar}, $\sm[\sq{V}]^i(U^i)=U^{i+1}$ for every $i\in\ZZ$. It follows that, given $j\in\ZZ$ and $x\in U^j$ there exists $(x_i)_{i\in\ZZ}\in\compat{\sq{V}}$ such that $x_j=x$, which implies that $x\in\inj{V}^j$. Thus $\sq{U}$ is a subobject of $\sq{\inj{V}}$.
\end{proof}

\subsection{Indecomposable objects}\label{subsec:indec}

Recall that a non-zero object in an additive category is \emph{indecomposable} if it is not a coproduct of two non-zero objects. An object is \emph{completely decomposable} if it is a coproduct of indecomposable objects. Using \autoref{essbij} and the fact that $\fpr\colon\SQ\to\Sq$ preserves coproducts, we see that decompositions as a coproduct in $\Sq$ and in $\SQ$ coincide; in particular, $\Sq$ and $\SQ$ have the same indecomposable objects and the same completely decomposable objects.

\begin{remark}\label{indec}
It is clear that $\sq{S}_{a,b}$ are indecomposable in $\Sq$ (for every $a\in\ZZ\cup\{-\infty\}$ and $b\in\ZZ\cup\{\infty\}$ with $a\le b$), and we will see in \autoref{indchar} that they are the only indecomposable objects of $\Sq$, up to isomorphism.
\end{remark}

Given $A\subseteq\ZZ\cup\{-\infty\}$ and $B\subseteq\ZZ\cup\{\infty\}$, we will denote by $\Sqd[A,B]$ the (strictly) full subcategory of $\Sq$ consisting of coproducts of objects of the form $\sq{S}_{a,b}$ with $a\in A$, $b\in B$ and $a\le b$. We will write $\Sqd$ instead of $\Sqd[\ZZ\cup\{-\infty\},\ZZ\cup\{\infty\}]$. This notation will be extended to $\SQ$ in an obvious way.

\begin{lem}\label{injdec}
An object of $\Sq$ is injective if and only if it belongs to $\Sqd[\{-\infty\},\ZZ\cup\{\infty\}]$.
\end{lem}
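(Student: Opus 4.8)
The plan is to characterize the injective objects of $\Sq$ directly via \autoref{injchar}, and then to recognize exactly which coproducts of the indecomposables $\sq{S}_{a,b}$ satisfy that characterization. Recall \autoref{injchar} says that $\sq{W}$ is injective if and only if every structure map $\sm[\sq{W}]^i$ is surjective. So the statement to prove is: a coproduct $\bigoplus_\lambda \sq{S}_{a_\lambda,b_\lambda}$ has all its structure maps surjective if and only if every $a_\lambda=-\infty$, that is, every summand is of the form $\sq{S}_{-\infty,b}$ with $b\in\ZZ\cup\{\infty\}$.

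First I would handle the ``if'' direction. Since coproducts in the abelian category $\Sq$ are computed degreewise (being functor categories into $\Mod{\kk}$ by \autoref{lem:abGroth}), the $i$-th structure map of a coproduct is the coproduct of the structure maps of the summands; and a coproduct of surjections is a surjection. So it suffices to check that each $\sq{S}_{-\infty,b}$ has all structure maps surjective. For $a\le i<b$ the map $\sm[\sq{S}_{-\infty,b}]^i=(-1)^i\id_\kk$ is an isomorphism, and for $i\ge b$ (when $b<\infty$) the target $S_{-\infty,b}^{i+1}$ is zero, so the map is trivially surjective; when $b=\infty$ all maps are isomorphisms. Hence every $\sq{S}_{-\infty,b}$ is injective by \autoref{injchar}, and therefore so is any coproduct of such objects (a coproduct of injectives in $\Sq$ is injective because $\Sq$ is locally noetherian by \autoref{locallynoetherian}, or directly from \autoref{injchar}).

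For the ``only if'' direction, I would use \autoref{injpart}: given an injective object $\sq{V}$, it equals its maximal injective subobject $\sq{\inj V}$, and by the description in \autoref{Homindec} an element $x\in \inj{V}^j$ is precisely one that fits into a full compatible sequence $(x_i)_{i\in\ZZ}\in\compat{\sq V}$. The key point is that every homogeneous element then lies in the image of $\sm^{i,j}$ for all $i\le j$, i.e.\ is ``infinitely divisible to the left.'' Now suppose $\sq{V}$ is injective and decompose it (as we may, once we know indecomposables are the $\sq{S}_{a,b}$ — but to avoid circularity with \autoref{indchar} I would instead argue intrinsically): if some structure map $\sm[\sq{V}]^i$ failed to be surjective we would already contradict \autoref{injchar}, so that is not the obstacle; rather, the content is that surjectivity of all $\sm[\sq{V}]^i$ forces, for the purposes of this lemma, that $\sq{V}$ be a coproduct of $\sq{S}_{-\infty,b}$'s. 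Concretely: choose for each $i$ a $\kk$-basis of $\ker(\sm[\sq{V}]^i)$ and lift, using surjectivity of the $\sm^{j}$ with $j<i$, to compatible sequences indexed by $\ZZ_{\le i}$; splicing these together and using that over a field everything splits, one builds an isomorphism $\sq{V}\iso\bigoplus_\lambda \sq{S}_{-\infty,b_\lambda}$ where each basis vector of $\ker(\sm^{b_\lambda})$ generates a summand $\sq{S}_{-\infty,b_\lambda}$ (and vectors never killed by any $\sm^{i,n}$ generate $\sq{S}_{-\infty,\infty}$). Thus $\sq{V}\in\Sqd[\{-\infty\},\ZZ\cup\{\infty\}]$.

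The main obstacle is the bookkeeping in this last construction: one must simultaneously choose compatible bases in all the kernels $K^i_n(\sq{V})$ and in the ``infinitely divisible, never killed'' part, splice the left-infinite rays together coherently, and verify the resulting map is an isomorphism — a transfinite basis-choice argument over the field $\kk$. It may be cleaner to invoke \autoref{indchar} (the classification of indecomposables) once it is available and simply say: $\sq{V}$ injective $\Rightarrow$ $\sq{V}$ completely decomposable into $\sq{S}_{a,b}$'s, and a summand $\sq{S}_{a,b}$ with $a\in\ZZ$ has non-surjective structure map $\sm[\sq{S}_{a,b}]^{a-1}=0\colon 0\to\kk$, contradicting \autoref{injchar}; so all summands have $a=-\infty$. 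I would present the argument in whichever order the paper has already established the classification, but in any case the essential input is just \autoref{injchar} together with the indecomposable decomposition.
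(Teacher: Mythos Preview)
Your ``if'' direction matches the paper's exactly. For the ``only if'' direction, your two proposed routes diverge from the paper, and one of them has a genuine gap.

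Your approach via \autoref{indchar} is circular in the paper's development: \autoref{indchar} is deduced from \autoref{seqdec}, whose proof explicitly invokes \autoref{injdec} to conclude that the quotient $\sq{W}$ and the pieces $\sq{U}_n$ lie in $\Sqd[\{-\infty\},\ZZ\cup\{\infty\}]$. So you cannot fall back on the classification here; you correctly flagged this worry, but it is not optional.

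Your direct basis-lifting construction is a different strategy from the paper's and could in principle be made to work, but as you acknowledge, the coherence bookkeeping is nontrivial: one must simultaneously choose backward lifts for kernel bases in all degrees, handle the part of $\sq{V}$ never killed by any $\sm^{i,n}$, and verify that the resulting map from the coproduct is an isomorphism rather than merely having dense image. The paper sidesteps all of this with a short Zorn's-lemma argument: partially order the subobjects $\sq{W}\subseteq\sq{V}$ lying in $\Sqd[\{-\infty\},\ZZ\cup\{\infty\}]$ by inclusion-plus-direct-summand, take a maximal one, and if its (injective) complement $\sq{\widetilde W}$ were nonzero, produce a nonzero map $\sq{S}_{n,\infty}\to\sq{\widetilde W}$, extend it by injectivity through the chain $\sq{S}_{n,\infty}\mono\sq{S}_{n-1,\infty}\mono\cdots$ to a nonzero map from $\sq{S}_{-\infty,\infty}$, and observe that its image gives a copy of some $\sq{S}_{-\infty,m}$ inside $\sq{\widetilde W}$, contradicting maximality. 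This avoids any explicit basis manipulation and is considerably shorter than what your approach~1 would require.
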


\begin{proof}
The ``if'' part is clear by \autoref{injchar}, since each $\sq{S}_{-\infty,b}$ is injective and a coproduct of injective objects is injective.
We need to prove the converse.

Assume therefore that $\sq{V}\in\Sq$ is injective, and consider the set $A$ of all subobjects $\sq{W}\subset\sq{V}$ such that
\begin{enumerate}
\item
$\sq{W}\in\Sqd[\{-\infty\},\ZZ\cup\{\infty\}]$.
\setcounter{enumiv}{\value{enumi}}
\end{enumerate}
Define a partial order on $A$ by declaring that 
\begin{enumerate}
\setcounter{enumi}{\value{enumiv}}
\item
$\sq{W_1}<\sq{W_2}$ means that $\sq{W_1}\subset\sq{W_2}$
and $\sq{W_2}=\sq{W_1}\oplus\sq{\widetilde W}$ with
$\sq{\widetilde W}\in\Sqd[\{-\infty\},\ZZ\cup\{\infty\}]$.
\end{enumerate}
By Zorn's Lemma the set $A$ contains a maximal member, and we
assert that it must be all of $\sq{V}$.

Assume it isn't, and we prove a contradiction. Let
$\sq{W}\in A$ be maximal; being an injective subobject
of $\sq{V}$ we must have a decomposition $\sq{V}=\sq{W}\oplus\sq{\widetilde W}$,
and because $\sq{W}$ is a proper subobject of
$\sq{V}$ we must have $\sq{\widetilde W}\neq0$.
By \autoref{locallynoetherian}
there must exist a nonzero map $\sq{S}_{n,\infty}\to\sq{\widetilde W}$,
and because $\sq{\widetilde W}$ is injective the map must factor
through each of the monomorphisms
$\sq{S}_{n,\infty}\mono\sq{S}_{n-1,\infty}\mono\sq{S}_{n-2,\infty}\mono\cdots$.
Therefore it factors through the colimit, and we deduce a nonzero
map $\sq{S}_{-\infty,\infty}\to\sq{\widetilde W}$. The kernel
is a proper subobject of $\sq{S}_{-\infty,\infty}$, and the only nonzero,
proper
subobjects are $\sq{S}_{m+1,\infty}$. Hence the map must factor through
a monomorphism $\sq{S}_{-\infty,m}\mono\sq{\widetilde W}$. This
makes $\sq{W}\oplus\sq{S}_{-\infty,m}$ a subobject of
$\sq{V}$, contradicting the maximality of
$\sq{W}$.
\end{proof}

\begin{remark}\label{projchar}
In \autoref{locallynoetherian} we saw that the objects
$\sq{S}_{n,\infty}$ are projective generators for
the abelian category $\Sq$. Since coproducts of
projective objects are projective, every object in
$\Sqd[\ZZ,\{\infty\}]$ must be projective. But as
every object admits an epimorphism from an
object in $\Sqd[\ZZ,\{\infty\}]$, the projective
objects are precisely the direct summands of the
objects in $\Sqd[\ZZ,\{\infty\}]$.

It can be shown that the category
$\Sqd[\ZZ,\{\infty\}]$ is closed under direct
summands, but since we do not need it
we will not include a proof.
It would be interesting to know if
$\Sqd$ is also closed under direct summands.

It immediately follows that, if
$\sq{W}$ is projective, then $\sm[\sq{W}]^i$ is injective for every $i\in\ZZ$.
The converse, however, isn't true; the injectivity of each $\sm[\sq{W}]^i$ does not in general imply that $\sq{W}$ is projective. For instance, we claim that $\sq{S}_{-\infty,\infty}$ is not projective. In order to see this, consider the object $\sq{T}$ of $\Sq$ defined by $T^i:=\kk^\NN$ and $\sm[\sq{T}]^i(c_0,c_1,\dots):=(0,c_0,c_1,\dots)$ for every $i\in\ZZ$. As $\compat{\sq{T}}=0$, by \autoref{Homindec} we have $\Hom_\Sq(\sq{S}_{-\infty,\infty},\sq{T})=0$. On the other hand, denoting by $\sq{\quot{T}}$ the quotient of $\sq{T}$ by its subobject whose $i^{\mathrm{th}}$ component is $\kk^{(\NN)}$, it is straightforward to see that each $\sm[\sq{\quot{T}}]^i$ is an isomorphism. It follows that $\sq{T}$ is isomorphic to a non-empty (actually uncountable) coproduct of copies of $\sq{S}_{-\infty,\infty}$ (see \autoref{isosign}), whence $\Hom_\Sq(S_{-\infty,\infty},\sq{\quot{T}})\ne0$. This shows that $\sq{S}_{-\infty,\infty}$ is not projective. It can be proved that $\sq{T}$ is not projective either.
\end{remark}

\begin{lem}\label{gendec}
For every $\sq{V}\in\Sq$ the subobjects $\sq{V}_n$ of $\sq{V}$ (for $n\in\ZZ$) defined by
\[
V_n^i:=
\begin{cases}
V^i & \text{if $i\le n$} \\
I^i_n & \text{if $i>n$}
\end{cases}
\]
are such that $\sq{V}_n\subseteq\sq{V}_{n+1}$ and $\sq{V}=\cup_{n\in\ZZ}\sq{V}_n$. Moreover, $\sq{V}_n$ is a direct summand of $\sq{V}_{n+1}$.
\end{lem}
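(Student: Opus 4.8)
The plan is to dispatch the first three assertions quickly and to concentrate on the splitting. Since the abelian structure on $\Sq$ is computed degreewise (\autoref{lem:abGroth}), write $V_n^i=I^i_n$ for all $i\ge n$ (this is consistent with $i=n$, where $I^n_n=V^n$). Then the inclusion $\sq{V}_n\subseteq\sq{V}_{n+1}$ reduces in degrees $\ge n+1$ to $I^i_n\subseteq I^i_{n+1}$, which holds because $\sm^{n,i}=\sm^{n+1,i}\comp\sm^n$; that each $\sq{V}_n$ is a subobject of $\sq{V}$ follows from $\sm^i(I^i_n)=I^{i+1}_n$; and $\sq{V}=\bigcup_n\sq{V}_n$ because $V_n^i=V^i$ as soon as $n\ge i$.

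For the last assertion I would build an explicit complement $\sq{W}$ of $\sq{V}_n$ in $\sq{V}_{n+1}$. Put $A:=I^{n+1}_n=\im(\sm[\sq{V}]^n)\subseteq V^{n+1}$, and pick a complement $C$ of $A$ in $V^{n+1}$ (to be constrained below). Define $\sq{W}$ by $W^i:=0$ for $i<n+1$, $W^{n+1}:=C$, and $W^i:=\sm^{n+1,i}(C)$ for $i>n+1$; the maps $\sm[\sq{V}]^i$ restrict to $\sq{W}$ by construction, so $\sq{W}$ is a subobject of $\sq{V}_{n+1}$. Again because everything is degreewise, $\sq{V}_{n+1}=\sq{V}_n\oplus\sq{W}$ is equivalent to the identities $V_{n+1}^i=V_n^i\oplus W^i$ in each degree. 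These hold trivially for $i\le n$, and for $i=n+1$ they are just $V^{n+1}=A\oplus C$. For $i>n+1$ they say $I^i_{n+1}=I^i_n\oplus\sm^{n+1,i}(C)$; applying $\sm^{n+1,i}$ to $V^{n+1}=A\oplus C$ and using $\sm^{n+1,i}(A)=\sm^{n,i}(V^n)=I^i_n$ immediately gives $I^i_{n+1}=I^i_n+\sm^{n+1,i}(C)$, so the one thing left to check is that this sum is direct, i.e.\ $I^i_n\cap\sm^{n+1,i}(C)=0$. An elementary computation shows that, given $V^{n+1}=A\oplus C$, one has $\sm^{n+1,i}(A)\cap\sm^{n+1,i}(C)=0$ if and only if $K^{n+1}_i=(K^{n+1}_i\cap A)\oplus(K^{n+1}_i\cap C)$, where $K^{n+1}_i=\ker(\sm^{n+1,i})$. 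This is the constraint on $C$: it must be ``compatible'' with the increasing chain of subspaces $K^{n+1}_{n+2}\subseteq K^{n+1}_{n+3}\subseteq\cdots$ of $V^{n+1}$, all at once.

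The task is thereby reduced to a fact of pure linear algebra: given a vector space $U$, a subspace $A\subseteq U$ and an increasing chain $K_{n+2}\subseteq K_{n+3}\subseteq\cdots$ of subspaces of $U$, there is a complement $C$ of $A$ in $U$ such that $K_i=(K_i\cap A)\oplus(K_i\cap C)$ for every $i$. I would prove this by constructing $C$ step by step along the (countable) chain: choose a complement $C_{n+2}$ of $A\cap K_{n+2}$ inside $K_{n+2}$; as $C_{n+2}\subseteq K_{n+2}$ forces $C_{n+2}\cap A=0$, enlarge $C_{n+2}$ to a complement $C_{n+3}$ of $A\cap K_{n+3}$ inside $K_{n+3}$, and iterate; then $C_\infty:=\bigcup_i C_i$ still satisfies $C_\infty\cap A=0$, so extend it to a complement $C$ of $A$ in $U$. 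For each $i$ one has $C_i\subseteq C\cap K_i$ and $(A\cap K_i)\cap(C\cap K_i)\subseteq A\cap C=0$, while $(A\cap K_i)+C_i=K_i$; hence $K_i=(A\cap K_i)\oplus(C\cap K_i)$, as required.

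I expect the main obstacle to be precisely this compatible-complement lemma, together with the small computation that converts the directness of $I^i_n+\sm^{n+1,i}(C)$ into compatibility of $C$ with $\ker(\sm^{n+1,i})$; the rest is bookkeeping. It should be stressed that an arbitrary complement $C$ of $\im(\sm[\sq{V}]^n)$ will not do — the images under $\sm^{n+1,i}$ of two complementary subspaces of $V^{n+1}$ need not be complementary inside $I^i_{n+1}$ — which is exactly what forces the inductive choice of $C$.
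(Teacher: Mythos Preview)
Your argument is correct, but the paper proceeds quite differently. Instead of building an explicit complement, the paper exploits \autoref{injchar}: it defines an auxiliary object $\sq{\tilde{V}}_n$ (equal to $\sq{V}_n$ in degrees $\ge n$, with identity maps filling in the degrees $<n$) whose differentials are all surjective, hence $\sq{\tilde{V}}_n$ is injective; the obvious map $\sq{V}_n\to\sq{\tilde{V}}_n$ then extends along the inclusion $\sq{V}_n\hookrightarrow\sq{V}_{n+1}$ by injectivity, and from the extension one reads off a retraction $\sq{V}_{n+1}\to\sq{V}_n$.

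So the paper's proof is short and categorical, leaning on the injectivity criterion already established, whereas yours is a hands-on linear-algebra construction that bypasses injectivity entirely. Your route costs the compatible-complement lemma for an increasing countable chain of subspaces (which you prove correctly), but in exchange it yields an explicit description of the complement $\sq{W}$ and makes transparent exactly where a naive choice of $C$ would fail. Both approaches are valid; the paper's is slicker given the machinery available at that point, while yours is more self-contained.
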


\begin{proof}
The first part being straightforward, it is enough to prove the last statement.  To this end, we define $\sq{\tilde{V}}_n\in\Sq$ by
\[
\tilde{V}_n^i:=
\begin{cases}
V^n & \text{if $i\le n$} \\
V^i_n=I^i_n & \text{if $i>n$}
\end{cases}
\qquad
\sm[\sq{\tilde{V}}_n]^i:=
\begin{cases}
\id_{V^n} & \text{if $i<n$} \\
\sm[\sq{V}_n]^i & \text{if $i\ge n$}
\end{cases}
\]
and observe that each $\sm[\sq{\tilde{V}}_n]^i$ is surjective, so that $\sq{\tilde{V}}_n$ is injective by \autoref{injchar}. Then the natural morphism $\sq{f}\colon\sq{V}_n\to\sq{\tilde{V}}_n$ (defined by $f^i:=\sm[\sq{V}]^{i,n}$ for $i<n$ and $f^i:=\id_{V^i_n}$ for $i\ge n$) extends to a morphism $\sq{g}\colon\sq{V}_{n+1}\to\sq{\tilde{V}}_n$. Finally, it is very easy to check that $\sq{h}\colon\sq{V}_{n+1}\to\sq{V}_n$ defined by $h^i:=\id_{V^i_n}$ for $i\le n$ and $h^i:=g^i$ for $i>n$ is a morphism of $\Sq$ such that $\sq{h}\rest{\sq{V}_n}=\id_{\sq{V}_n}$. 
\end{proof}

\begin{prop}\label{seqdec}
For every $\sq{V}\in\Sq$ there exists an exact sequence $0\to\sq{U}\to\sq{V}\to\sq{W}\to0$ with $\sq{U}\in\Sqd[\ZZ,\ZZ\cup\{\infty\}]$ and $\sq{W}\in\Sqd[\{-\infty\},\ZZ\cup\{\infty\}]$. Moreover, the sequence splits if, for every $i\in\ZZ$, there exists $n_i<i$ such that the inclusion $I^i_n\subseteq I^i_{n+1}$ is an equality for $n\le n_i$.
\end{prop}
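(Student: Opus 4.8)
The plan is to manufacture the short exact sequence from the exhaustive filtration $\sq V=\bigcup_n\sq V_n$ furnished by \autoref{gendec}, exploiting that each inclusion $\sq V_n\mono\sq V_{n+1}$ there is split. First I would fix, for every $n$, a decomposition $\sq V_{n+1}=\sq V_n\oplus\sq C_n$ in $\Sq$ and determine the shape of the layer $\sq C_n$. Since $V_n^i=V^i=V_{n+1}^i$ for $i\le n$, the layer $\sq C_n$ is concentrated in degrees $>n$; and since for $i>n$ the map $\sm[\sq V_{n+1}]^i$ is the direct sum $\sm[\sq V_n]^i\oplus\sm[\sq C_n]^i$ and carries $I^i_{n+1}$ onto $I^{i+1}_{n+1}$ while its first summand already carries $I^i_n$ onto $I^{i+1}_n$, the second summand $\sm[\sq C_n]^i$ is surjective. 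Thus $\sq C_n$ is a sequence of $\kk$-vector spaces supported in degrees $\ge n+1$ with all structure maps surjective, and I would prove the (elementary) structural fact that any such sequence is a coproduct of objects $\sq{S}_{a,b}$ with $a\ge n+1$: this is the surjective analogue of the iso case of \autoref{isosign}, proved by choosing in degree $n+1$ a basis of $C_n^{n+1}$ adapted to the increasing filtration by the kernels of the composites $C_n^{n+1}\to C_n^j$ and reading off one interval strand per basis vector, surjectivity of all the structure maps making this assignment an isomorphism. In particular each $\sq C_n$, hence any coproduct of them, lies in $\Sqd[\ZZ,\ZZ\cup\{\infty\}]$.

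Next I would set $\sq U:=\sum_{n\in\ZZ}\sq C_n\subseteq\sq V$ and $\sq W:=\sq V/\sq U$. The internal sum is direct because the layers are staggered: for $n_1<\dots<n_k$ one has $\sq C_{n_1},\dots,\sq C_{n_{k-1}}\subseteq\sq V_{n_k}$ while $\sq V_{n_k}\cap\sq C_{n_k}=0$, so $\sq C_{n_1}+\dots+\sq C_{n_k}$ is direct by induction; hence $\sq U\iso\bigoplus_n\sq C_n\in\Sqd[\ZZ,\ZZ\cup\{\infty\}]$. To see that $\sq W$ belongs to $\Sqd[\{-\infty\},\ZZ\cup\{\infty\}]$, by \autoref{injdec} and \autoref{injchar} it is enough to check that each $\sm[\sq W]^i$ is surjective, i.e.\ that $V^{i+1}=\im(\sm[\sq V]^i)+U^{i+1}$; and indeed $V^{i+1}=V^{i+1}_{i+1}=V^{i+1}_i\oplus C^{i+1}_i$ with $V^{i+1}_i=I^{i+1}_i=\im(\sm[\sq V]^i)$, so already $V^{i+1}=\im(\sm[\sq V]^i)+C^{i+1}_i\subseteq\im(\sm[\sq V]^i)+U^{i+1}$. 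This gives the short exact sequence.

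For the splitting criterion I would argue that, under the hypothesis, the ``tail'' $\sq Z:=\bigcap_n\sq V_n$ (so $Z^i=\bigcap_n I^i_n$) is a complement of $\sq U$. The key observation is $C^i_n\iso I^i_{n+1}/I^i_n$ for $i>n$ (with the convention $I^i_i:=V^i$) and $C^i_n=0$ for $i\le n$, so the hypothesis forces $C^i_n=0$ whenever $n\le n_i$ and, for each fixed $i$, $C^i_n\ne0$ for only finitely many $n$. Iterating the split steps gives $\sq V_n=\sq V_{n_0}\oplus\bigoplus_{n_0\le m<n}\sq C_m$ for $n_0<n$, and passing to the union over $n$---using that in the Grothendieck category $\Sq$ filtered unions commute with finite direct sums and with intersections---yields $\sq V=\sq V_{n_0}\oplus\bigoplus_{m\ge n_0}\sq C_m$ for every $n_0$. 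Fixing $i$ and choosing $n_0\le n_i$, the hypothesis gives $V^i_{n_0}=I^i_{n_0}=Z^i$ and $C^i_m=0$ for $m<n_0$, so the last decomposition in degree $i$ reads $V^i=Z^i\oplus U^i$; as this holds in every degree, $\sq V=\sq Z\oplus\sq U$, the composite $\sq Z\mono\sq V\epi\sq W$ is an isomorphism, and the sequence splits.

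I expect the one genuinely new ingredient to be the structural lemma in the first step---that a surjective sequence of vector spaces supported in degrees $\ge m$ is a coproduct of intervals---although this is no harder than the adapted-basis/Zorn arguments already used for \autoref{injchar} and \autoref{injdec}. The other delicate point is the passage $n_0\to-\infty$ in the splitting part, where the stabilization hypothesis on the images $I^i_n$ is exactly what is needed to make $\sq Z$ a genuine complement (and then, automatically, an object of $\Sqd[\{-\infty\},\ZZ\cup\{\infty\}]$); everything remaining is bookkeeping with \autoref{gendec} and with exactness of filtered colimits in $\Sq$.
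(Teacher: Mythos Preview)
Your proof is correct and follows essentially the same route as the paper: take the layers of the filtration from \autoref{gendec}, set $\sq U$ to be their internal direct sum, check that $\sq W=\sq V/\sq U$ has surjective structure maps (hence lies in $\Sqd[\{-\infty\},\ZZ\cup\{\infty\}]$ by \autoref{injchar} and \autoref{injdec}), and for the splitting use the intersection $\sq Z=\bigcap_n\sq V_n$ as the complement. The one executional difference is in showing that each layer $\sq C_n$ lies in $\Sqd[\ZZ,\ZZ\cup\{\infty\}]$: the paper observes that $\sq C_n$ is the truncation $\ge n{+}1$ of an injective object (just extend by identities to the left) and invokes \autoref{injdec} directly, which is shorter than the adapted-basis argument you sketch and avoids reproving a special case of that lemma.
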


\begin{proof}
By \autoref{gendec}, denoting by $\sq{U}_n$ a complementary of $\sq{V}_{n-1}$ in $\sq{V}_n$, obviously $\sq{U}:=\bigoplus_{n\in\ZZ}\sq{U}_n$ is a subobject of $\sq{V}$, and we define $\sq{W}:=\sq{V}/\sq{U}$. It is easy to see that $\sm[\sq{W}]^i$ is surjective for every $i\in\ZZ$, while $\sm[\sq{U}_n]^i$ is surjective for $i\ge n$ and $U_n^i=0$ for $i<n$. It follows from \autoref{injchar} that $\sq{W}$ is injective and that there exist injective objects $\sq{\tilde{U}}_n$ such that $\sq{U}_n=\tilde{U}_n^{\ge n}$ (one can take, for instance, $\tilde{U}_n^i:=U_n^n$ and $\sm[\sq{\tilde{U}}_n]^i:=\id_{U^n_n}$ for $i<n$). By \autoref{injdec} we obtain, as wanted, that $\sq{W}\in\Sqd[\{-\infty\},\ZZ\cup\{\infty\}]$ and $\sq{U}_n\in\Sqd[\{n\},\ZZ\cup\{\infty\}]$ for every $n\in\ZZ$.

As for the last statement, observe that, in any case, $\sq{V}_{-\infty}:=\cap_{n\in\ZZ}\sq{V}_n$ is a subobject of $\sq{V}$ such that $\sq{U}\cap\sq{V}_{-\infty}=0$. Assuming that, for every $i\in\ZZ$, there exists $n_i<i$ such that $I^i_n=I^i_{n+1}$ for $n\le n_i$, it is clear that $V^i_{-\infty}=I^i_n$  for $n\le n_i$ and $V^i=U^i+V^i_{-\infty}$. Thus we conclude that $\sq{V}=\sq{U}\oplus\sq{V}_{-\infty}$ (hence $\sq{W}\iso\sq{V}_{-\infty}$) in this case.
\end{proof}

\begin{cor}\label{indchar}
An object $\sq{V}$ of $\Sq$ is indecomposable (respectively, completely decomposable) if and only if $\sq{V}\iso\sq{S}_{a,b}$ for some $a\in\ZZ\cup\{-\infty\}$ and $b\in\ZZ\cup\{\infty\}$ with $a\le b$ (respectively, $\sq{V}\in\Sqd$).
\end{cor}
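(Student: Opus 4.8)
The plan is to reduce the corollary to the characterization of indecomposable objects, and to prove the latter by combining the chain of \autoref{gendec} with \autoref{seqdec}, \autoref{injchar} and \autoref{injdec}. First observe that the ``completely decomposable'' half is purely formal once the ``indecomposable'' half is known: if $\sq{V}\in\Sqd$ then $\sq{V}$ is a coproduct of objects $\sq{S}_{a,b}$, each indecomposable by \autoref{indec}, hence $\sq{V}$ is completely decomposable; conversely, if $\sq{V}$ is a coproduct of indecomposables then each summand is an $\sq{S}_{a,b}$ by the indecomposable case, so $\sq{V}\in\Sqd$. Moreover the implication ``$\sq{V}\iso\sq{S}_{a,b}\Rightarrow\sq{V}$ indecomposable'' is exactly \autoref{indec}. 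So the whole statement reduces to showing that an indecomposable $\sq{V}$ (necessarily nonzero) is isomorphic to some $\sq{S}_{a,b}$.

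For that, I would start from the chain of subobjects $\sq{V}_n\subseteq\sq{V}_{n+1}$ of \autoref{gendec}, with $\sq{V}=\bigcup_{n\in\ZZ}\sq{V}_n$ and each inclusion split. The key preliminary point is that each $\sq{V}_N$ is then a direct summand of $\sq{V}$ itself: since $\{n\ge N\}$ is cofinal in $\ZZ$ we have $\sq{V}=\colim_{n\ge N}\sq{V}_n$, and composing the retractions of the split monomorphisms $\sq{V}_n\hookrightarrow\sq{V}_{n+1}$ gives a compatible family $\sq{V}_n\to\sq{V}_N$ ($n\ge N$), hence a retraction $\sq{V}\to\sq{V}_N$ of the inclusion $\sq{V}_N\hookrightarrow\sq{V}$. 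As $\sq{V}$ is indecomposable, it follows that every $\sq{V}_N$ equals either $0$ or $\sq{V}$.

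Now distinguish two cases. If $\sq{V}_N=\sq{V}$ for all $N\in\ZZ$, then $\im(\sm[\sq{V}]^{N,i})=V^i$ whenever $N<i$; in particular every $\sm[\sq{V}]^i$ is surjective, so $\sq{V}$ is injective by \autoref{injchar}, hence $\sq{V}\in\Sqd[\{-\infty\},\ZZ\cup\{\infty\}]$ by \autoref{injdec}, and being indecomposable and nonzero it must be a single $\sq{S}_{-\infty,b}$. Otherwise $\sq{V}_M=0$ for some $M$, which by the definition of $\sq{V}_M$ forces $V^i=0$ for all $i\le M$. Applying \autoref{seqdec} we get a short exact sequence $0\to\sq{U}\to\sq{V}\to\sq{W}\to0$ with $\sq{U}\in\Sqd[\ZZ,\ZZ\cup\{\infty\}]$ and $\sq{W}\in\Sqd[\{-\infty\},\ZZ\cup\{\infty\}]$; but $\sq{W}$, being a quotient of $\sq{V}$, vanishes in all degrees $\le M$, whereas a nonzero coproduct of objects $\sq{S}_{-\infty,b}$ is nonzero in arbitrarily negative degrees, so $\sq{W}=0$ and $\sq{V}\iso\sq{U}$. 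Again indecomposability forces $\sq{U}$ to be a single $\sq{S}_{n,b}$ with $n\in\ZZ$. In every case $\sq{V}\iso\sq{S}_{a,b}$, as desired.

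The step I expect to require the most care is the preliminary observation in the second paragraph, that the split chain of \autoref{gendec} exhibits each $\sq{V}_N$ as a direct summand of $\sq{V}$ (so that indecomposability can be applied termwise); once that is in place, the rest is a matter of unwinding definitions and quoting \autoref{gendec}, \autoref{seqdec}, \autoref{injchar} and \autoref{injdec}.
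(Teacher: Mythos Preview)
Your argument is correct and follows essentially the same route as the paper: both proofs use the chain $\sq{V}_n$ from \autoref{gendec} together with indecomposability to reduce to the dichotomy ``all $\sq{V}_n=\sq{V}$'' (where $\sq{V}$ is injective, hence handled by \autoref{injdec}) versus ``some $\sq{V}_M=0$'' (where $\sq{V}$ is bounded below). The only real difference is that the paper obtains $\sq{V}_n$ as a direct summand of $\sq{V}$ by quoting the explicit decomposition $\sq{V}=(\bigoplus_{m>n}\sq{U}_m)\oplus\sq{V}_n$ from the proof of \autoref{seqdec}, which also lets it conclude immediately that $\sq{V}=\sq{U}_n\in\Sqd[\{n\},\ZZ\cup\{\infty\}]$ in the second case---your colimit/retraction argument and subsequent use of \autoref{seqdec} to force $\sq{W}=0$ reach the same endpoint a bit more indirectly.
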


\begin{proof}
With the notation used in the proof of \autoref{seqdec}, $\sq{V}=(\bigoplus_{m>n}\sq{U}_m)\oplus\sq{V}_n$, for every $\sq{V}\in\Sq$ and every $n\in\ZZ$.

Now, if $\sq{V}$ is indecomposable, then either $\sq{V}_n=\sq{V}$ for every $n\in\ZZ$ or there exists $n\in\ZZ$ such that $\sq{V}_n=\sq{V}$ and $\sq{V}_{n-1}=0$. In the former case $\sq{U}=\bigoplus_{n\in\ZZ}\sq{U}_n=0$, whence $\sq{V}\iso\sq{W}\in\Sqd[\{-\infty\},\ZZ\cup\{\infty\}]$, whereas in the latter case $\sq{V}=\sq{U}_n\in\Sqd[\{n\},\ZZ\cup\{\infty\}]$. As $\sq{V}$ is indecomposable, we obtain $\sq{V}\iso\sq{S}_{a,b}$ for some $a\le b$. By definition, this implies that the completely decomposable objects of $\Sq$ are precisely those of $\Sqd$.
\end{proof}

\section{The derived category of dual numbers}\label{dercatdual}

In this section we investigate several basic properties of various triangulated categories associated to categories of modules over the dual numbers $\kep$, where $\kk$ is a field.

\subsection{Homotopy categories and their relatives}\label{subsec:homotpy}

As a preliminary step, let us consider the following full subcategories of the homotopy category $\K:=\K(\Mod{\kep})$. First recall the following.

\begin{remark}\label{freemod}
(i) It is a standard property that a $\kep$-module is free if and only if it is projective if and only if it is injective.

(ii) If $M$ is a free $\kep$-module, then $M\iso V\otimes_\kk\kep$ with $V:=M\otimes_{\kep}\kk\in\Mod{\kk}$. If $N\iso W\otimes_\kk\kep$ (for some $W\in\Mod{\kk}$) is another free $\kep$-module, every morphism 
\[
f\in\Hom_{\kep}(M,N)\iso\Hom_{\kep}(V\otimes_\kk\kep,W\otimes_\kk\kep)\iso\Hom_\kk(V,W\otimes_\kk\kep)
\]
can be represented (uniquely) as $f_1+\ep f_\ep$ with $f_1,f_\ep\in\Hom_\kk(V,W)$. With this notation, given another morphism of free $\kep$-modules $g\colon N\to P$, it is obvious that $(g\comp f)_1=g_1\comp f_1$ and $(g\comp f)_\ep=g_1\comp f_\ep+g_\ep\comp f_1$.
\end{remark}

We can then define the following full subcategories of $\K$:
\begin{itemize}
\item $\Kac$ with objects the acyclic complexes;
\item $\Khp$ with objects the h-projective complexes (namely, the left orthogonal of $\Kac$ in $\K$);
\item $\Kco$ with objects the cofibrant complexes for the projective model structure on $\C:=\C(\Mod{\kep})$;
\item $\Kfr$ with objects the complexes $\sq{M}$ such that $M^i$ is a free $\kep$-module for every $i\in\ZZ$ (it is the homotopy category of injectives of $\Mod{\kep}$);
\item $\Kmi$ with objects $\sq{M}\in\Kfr$ which are \emph{minimal}, meaning that $\sm[\sq{M}]^i\in\ep\Hom_{\kep}(M^i,M^{i+1})$ for every $i\in\ZZ$;
\item $\Kaf:=\Kac\cap\Kfr$;
\item $\Kam:=\Kac\cap\Kmi$;
\item $\Kpf:=\Khp\cap\Kfr$;
\item $\Kpm:=\Khp\cap\Kmi$.
\end{itemize}

A preliminary comparison between the categories above is provided by the following result.

\begin{prop}\label{minimal}
The inclusion $\Kmi\subseteq\Kfr$ is an equivalence.
\end{prop}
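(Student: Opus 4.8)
The plan is to show the inclusion $\Kmi\subseteq\Kfr$ is both essentially surjective and fully faithful (as a functor of triangulated categories it is automatically exact, and we just need it to be an equivalence of categories). Full faithfulness on morphisms will be the easy half: since morphisms in the homotopy category between objects of $\Kmi$ are already homotopy classes of chain maps between complexes of free modules, and $\Kmi$ is a \emph{full} subcategory of $\Kfr$, there is nothing to prove for fullness or faithfulness — the content is entirely in essential surjectivity. So the real work is: given $\sq{M}\in\Kfr$, produce a minimal complex $\sq{M}'\in\Kmi$ with $\sq{M}\iso\sq{M}'$ in $\K$.

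First I would set up the bookkeeping using \autoref{freemod}. Write each free module $M^i\iso V^i\otimes_\kk\kep$ with $V^i\in\Mod\kk$, and decompose each differential as $\sm[\sq{M}]^i = \sm[\sq{M}]^i_1 + \ep\,\sm[\sq{M}]^i_\ep$ with $\sm[\sq{M}]^i_1,\sm[\sq{M}]^i_\ep\in\Hom_\kk(V^i,V^{i+1})$; here $\sm[\sq{M}]^i_1$ is the "classical" part obtained by reducing mod $\ep$, i.e.\ the differential of the complex $\sq{M}\otimes_{\kep}\kk$ of $\kk$-vector spaces. Minimality is exactly the condition that all the $\sm[\sq{M}]^i_1$ vanish. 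The strategy is to split off the "non-minimal part" as a contractible summand. Over a field, the complex $(V^\bullet,\sm[\sq{M}]^\bullet_1)$ of $\kk$-vector spaces decomposes (non-canonically) as a direct sum of a complex with zero differential and a direct sum of two-term exact complexes $\kk\xrightarrow{\id}\kk$; concretely, choose for each $i$ a splitting $V^i = B^i\oplus H^i\oplus L^i$ where $B^i=\im(\sm[\sq{M}]^{i-1}_1)$, $H^i$ is a complement of $B^i$ in $\ker(\sm[\sq{M}]^i_1)$, and $L^i$ maps isomorphically onto $B^{i+1}$. Tensoring with $\kep$ and using the multiplicativity $(g\comp f)_1=g_1\comp f_1$, $(g\comp f)_\ep=g_1\comp f_\ep+g_\ep\comp f_1$ of \autoref{freemod}(ii), one sees that the subcomplex of $\sq{M}$ built from the pieces $L^i\otimes\kep$ and $B^{i+1}\otimes\kep$ — using the part $\sm[\sq{M}]^i_1\colon L^i\isomor B^{i+1}$ of the differential — is a contractible direct summand of $\sq{M}$ in $\C$ (one builds the contracting homotopy and the splitting from the chosen isomorphisms, modifying by the $\ep$-terms as needed, which is where a short computation enters).

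Having split $\sq{M}\iso\sq{N}\oplus\sq{C}$ in $\C$ with $\sq{C}$ contractible (hence zero in $\K$) and $\sq{N}$ the complement, I would then argue that $\sq{N}$ is minimal: by construction the underlying $\kk$-complex of $\sq{N}$ is $(H^\bullet, 0)$, so all the $\sm[\sq{N}]^i_1$ vanish, i.e.\ $\sm[\sq{N}]^i\in\ep\Hom_{\kep}(N^i,N^{i+1})$, which is exactly the definition of $\Kmi$. Thus $\sq{M}\iso\sq{N}$ in $\K$ with $\sq{N}\in\Kmi$, giving essential surjectivity and completing the proof.

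The main obstacle is the splitting step: one must be careful that, when passing from the $\kk$-linear decomposition of the mod-$\ep$ complex to an honest direct-sum decomposition of the complex of $\kep$-modules, the $\ep$-parts $\sm[\sq{M}]^i_\ep$ of the differentials do not obstruct the splitting. They don't — because the obstruction lives in an Ext-group over $\kep$ between the contractible piece and the rest, and contractible complexes of projectives are genuinely projective/injective objects in $\C$, so the extension splits — but making this precise (either via an explicit homotopy/idempotent constructed degreewise, or by invoking that $\sq{C}$ is a zero object of $\K$ together with the fact that complexes of free $\kep$-modules compute $\Hom$ correctly) is the one place where care is required. I expect the cleanest writeup constructs the degreewise idempotent projector $\sq{M}\to\sq{M}$ onto $\sq{N}$ directly and checks it is a chain map using \autoref{freemod}(ii).
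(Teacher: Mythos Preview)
Your strategy is the same as the paper's: write $M^i=V^i\otimes_\kk\kep$, split each $V^i=B^i\oplus H^i\oplus L^i$ along the mod--$\ep$ complex $(V^\bullet,\sm_1)$, and show that $\sq{M}$ is homotopy equivalent to the minimal complex $\sq{N}$ with $N^i=H^i\otimes_\kk\kep$. The paper does precisely what you propose in your last sentence: after recording the matrix shape of $\sm_\ep^i$ forced by $\sm_1^{i+1}\sm_\ep^i+\sm_\ep^{i+1}\sm_1^i=0$, it writes down explicit chain maps $\sq{f}\colon\sq{M}\to\sq{N}$ and $\sq{g}\colon\sq{N}\to\sq{M}$ (each of the form $(\cdot)_1+\ep(\cdot)_\ep$) together with an explicit homotopy $\sq{k}$, and checks by hand that $\sq{f}\comp\sq{g}=\id_{\sq{N}}$ and $\sq{g}\comp\sq{f}\sim\id_{\sq{M}}$. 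In particular the splitting $\sq{M}\cong\sq{N}\oplus\ker(\sq{f})$ in $\C$ that you anticipate really does hold.

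One small point worth sharpening: the naive submodule $(B^i\oplus L^i)\otimes_\kk\kep\subset M^i$ is \emph{not} a subcomplex of $\sq{M}$, because $\sm_\ep^i(L^i)$ generally has a nonzero $H^{i+1}$-component; so the Ext argument cannot be run against that candidate directly. You already flag that a modification is needed. The clean way to produce the exact sequence your argument wants is to take $\sq{C}=\bigoplus_i D^i(L^i\otimes_\kk\kep)$ as a coproduct of disk complexes and use $\Hom_\C(D^i(P),\sq{M})\cong\Hom_{\kep}(P,M^i)$ to get a chain map $\sq{C}\to\sq{M}$ from the inclusions $L^i\hookrightarrow V^i$; this is split mono in each degree (cokernel free, isomorphic to $H^i\otimes_\kk\kep$ after an $(\id+\ep\psi)$ change of basis), and then projectivity of $\sq{C}$ in $\C$ splits the sequence. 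The paper's explicit $\sq{f},\sq{g},\sq{k}$ are exactly this argument made concrete.
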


\begin{proof}
Given $\sq{M}\in\Kfr$, by \autoref{freemod} (ii) we can assume $M^i=V^i\otimes_\kk\kep$ (with $V^i\in\Mod{\kk}$) and $\sm^i=\sm[\sq{M}]^i$ is represented by $\sm[1]^i+\ep\sm[\ep]^i$ (with $\sm[1]^i,\sm[\ep]^i\in\Hom_\kk(V^i,V^{i+1})$), for every $i\in\ZZ$. Moreover, $\sm^{i+1}\comp\sm^i=0$ is equivalent to
\begin{gather}
\sm[1]^{i+1}\comp\sm[1]^i=0,\label{diff1} \\
\sm[1]^{i+1}\comp\sm[\ep]^i+\sm[\ep]^{i+1}\comp\sm[1]^i=0.\label{diffep}
\end{gather}
By \eqref{diff1} $B^i:=\im(\sm[1]^{i-1})\subseteq Z^i:=\ker(\sm[1]^i)$. Since $B^{i+1}\iso V^i/Z^i$, setting $H^i:=Z^i/B^i$, we can clearly assume $V^i=B^i\oplus H^i\oplus B^{i+1}$ and
\[
\sm[1]^i=
\begin{pmatrix}
0 & 0 & \id_{B^{i+1}} \\
0 & 0 & 0 \\
0 & 0 & 0
\end{pmatrix}
\colon V^i=B^i\oplus H^i\oplus B^{i+1}\to V^{i+1}=B^{i+1}\oplus H^{i+1}\oplus B^{i+2}.
\]
Then it is easy to see that \eqref{diffep} is satisfied if and only if $\sm[\ep]^i$ is of the form
\[
\sm[\ep]^i=
\begin{pmatrix}
e^i & a^i & b^{i+1} \\
0 & h^i & c^{i+1} \\
0 & 0 & -e^{i+1}
\end{pmatrix}
\colon V^i=B^i\oplus H^i\oplus B^{i+1}\to V^{i+1}=B^{i+1}\oplus H^{i+1}\oplus B^{i+2}
\]
(with arbitrary $\kk$-linear maps $a^i$, $b^i$, $c^i$, $e^i$ and $h^i$), for every $i\in\ZZ$. We claim that $\sq{M}$ is homotopy equivalent to $\sq{N}\in\Kmi$ defined by $N^i:=H^i\otimes_\kk\kep$ and $\sm[\sq{N}]^i$ represented by $\ep h^i$, for every $i\in\ZZ$. To this end, we must find morphisms $\sq{f}\colon\sq{M}\to\sq{N}$ and $\sq{g}\colon\sq{N}\to\sq{M}$ of $\C$ inducing inverse isomorphisms in $\K$. If $f^i$ and $g^i$ are represented, respectively, by $f^i_1+\ep f^i_\ep$ and $g^i_1+\ep g^i_\ep$ (with $f^i_1,f^i_\ep\in\Hom_\kk(V^i,H^i)$ and $g^i_1,g^i_\ep\in\Hom_\kk(H^i,V^i)$), we can set
\begin{gather*}
f^i_1:=
\begin{pmatrix}
0 & \id_{H^i} & 0
\end{pmatrix},
f^i_\ep:=
\begin{pmatrix}
-c^i & 0 & 0
\end{pmatrix}
\colon V^i=B^i\oplus H^i\oplus B^{i+1}\to H^i, \\
g^i_1:=
\begin{pmatrix}
0 \\
\id_{H^i} \\
0
\end{pmatrix},
g^i_\ep:=
\begin{pmatrix}
0 \\
0 \\
-a^i
\end{pmatrix}
\colon H^i\to V^i=B^i\oplus H^i\oplus B^{i+1}. \\
\end{gather*}
Indeed, it is straightforward to check that $\sm[\sq{N}]^i\comp f^i=f^{i+1}\comp\sm[\sq{M}]^i$, namely that
\[
0=f^{i+1}_1\comp\sm[1]^i, \qquad
h^i\comp f^i_1=f^{i+1}_1\comp\sm[\ep]^i+f^{i+1}_\ep\comp\sm[1]^i,
\]
and that $\sm[\sq{M}]^i\comp g^i=g^{i+1}\comp\sm[\sq{N}]^i$, namely that
\[
\sm[1]^i\comp g^i=0, \qquad
\sm[1]^i\comp g^i_\ep+\sm[\ep]^i\comp g^i_1=g^{i+1}_1\comp h^i.
\]
Finally, it is easy to verify that $\sq{f}\comp\sq{g}=\id_{\sq{N}}$, whereas a homotopy between $\sq{g}\comp\sq{f}$ and $\id_{\sq{M}}$ is given by morphisms $k^i\colon M^i\to M^{i-1}$ of $\Mod{\kep}$, represented by $k^i_1+k^i_\ep$, where
\[
k^i_1:=
\begin{pmatrix}
0 & 0 & 0 \\
0 & 0 & 0 \\
\id_{B^i} & 0 & 0
\end{pmatrix},
k^i_\ep:=
\begin{pmatrix}
0 & 0 & 0 \\
0 & 0 & 0 \\
-b^i & 0 & 0
\end{pmatrix}
\colon V^i=B^i\oplus H^i\oplus B^{i+1}\to V^{i-1}=B^{i-1}\oplus H^{i-1}\oplus B^i,
\]
for every $i\in\ZZ$.
\end{proof}

\begin{remark}\label{pmpf}
Clearly $\Kfr$ is a (non strictly) full triangulated subcategory of $\K$, and \autoref{minimal} implies that the same is true for $\Kmi$. Moreover, since $\Kac$ and $\Khp$ are localizing subcategories of $\K$, we obtain also that $\Kam$ and $\Kpm$ are localizing subcategories of $\Kmi$. Finally, again as a direct consequence of \autoref{minimal}, we see that the inclusions $\Kam\subseteq\Kaf$ and $\Kpm\subseteq\Kpf$ are equivalences, as well.
\end{remark}

\subsection{Categories of sequences}\label{subsec:sequences}

Keeping the notation from the previous section, we can now prove the following.

\begin{prop}\label{KmiSQ}
There is a natural equivalence between $\Kmi$ and $\SQ$, which is compatible with shifts (hence the latter category, with the standard shift functor, is triangulated in a natural way).
\end{prop}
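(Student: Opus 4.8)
The plan is to construct an explicit functor $\fF\colon\Kmi\to\SQ$ and to check that it is an equivalence commuting with shifts. The key preliminary observation is that an object of $\Kmi$ carries essentially the same data as an object of $\Sq$: by \autoref{freemod}(ii) one may write $M^i=V^i\otimes_\kk\kep$ with $V^i\in\Mod{\kk}$ (canonically $V^i=M^i\otimes_\kep\kk$), and minimality says that $\sm[\sq{M}]^i$ is represented by $\ep e^i$ for a unique $\kk$-linear map $e^i\colon V^i\to V^{i+1}$; moreover, by the composition rule recorded in \autoref{freemod}(ii), the identity $\sm[\sq{M}]^{i+1}\comp\sm[\sq{M}]^i=0$ becomes automatic (both factors have vanishing type-$1$ part), so that no relation is imposed on the $e^i$. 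I would therefore define $\fF(\sq{M})$ to be the sequence $\sq{V}$ with $i$-th term $V^i$ and differential $\sm[\sq{V}]^i:=e^i$ (recall that $\Sq$ and $\SQ$ have the same objects). Essential surjectivity of $\fF$ is then clear, since every $\sq{V}\in\Ob(\SQ)$ is the image of the minimal complex with terms $V^i\otimes_\kk\kep$ and differentials represented by $\ep\sm[\sq{V}]^i$.

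The substantial part is the behaviour on morphisms. Given a morphism $\sq{f}\colon\sq{M}\to\sq{N}$ of $\C(\Mod{\kep})$ between objects of $\Kmi$, I would write $f^i=f^i_1+\ep f^i_\ep$ as in \autoref{freemod}(ii) and expand the chain-map identity using the same composition rule; I expect the type-$1$ component to say precisely that $\sq{f}_1:=(f^i_1)_i$ is a morphism $\fF(\sq{M})\to\fF(\sq{N})$ in $\Sq$, while the type-$\ep$ component imposes nothing, so that $\sq{f}_\ep:=(f^i_\ep)_i$ is an arbitrary element of $\Hom^0_{\Sq}(\fF(\sq{M}),\fF(\sq{N}))$. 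One then sets $\fF(\sq{f}):=\sq{f}_1+[\sq{f}_\ep]$; functoriality is immediate because the composition rule of \autoref{freemod}(ii) and the composition rule defining $\SQ$ coincide verbatim. To see that $\fF$ factors through the homotopy category, I would expand the relation that $\sq{f}$ is null-homotopic via some $k^i\colon M^i\to N^{i-1}$, written $k^i=k^i_1+\ep k^i_\ep$: it should force $\sq{f}_1=0$ and exhibit $\sq{f}_\ep$ as the image under $\sm[\SHom_{\Sq}(\fF(\sq{M}),\fF(\sq{N}))]^{-1}$ of $(k^i_1)_i$, which is exactly the subspace divided out in passing from $\Hom^0_{\Sq}$ to $\Homep_{\Sq}=\cok(\sm^{-1})$; conversely any such $\sq{f}$ is visibly null-homotopic. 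Putting this together with the decomposition $\Hom_{\SQ}=\Hom_{\Sq}\oplus\cok(\sm^{-1})$, one gets that $\fF$ induces a bijection $\Hom_{\Kmi}(\sq{M},\sq{N})\iso\Hom_{\SQ}(\fF(\sq{M}),\fF(\sq{N}))$, i.e.\ $\fF$ is fully faithful, hence an equivalence.

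It remains to record shift-compatibility. Since the shift on $\Sq$ (extended to $\SQ$) sends $\sq{V}$ to the sequence with $i$-th term $V^{n+i}$ and differential $(-1)^n\sm[\sq{V}]^{n+i}$, and the shift on $\K$ likewise multiplies the differential by $(-1)^n$, one reads off directly that $\fF(\sh[n]{\sq{M}})=\sh[n]{\fF(\sq{M})}$, naturally in $\sq{M}$, both on objects and on morphisms. Transporting along $\fF$ the triangulated structure that $\Kmi$ inherits as a full triangulated subcategory of $\K$ (see \autoref{pmpf}) then makes $\SQ$ a triangulated category with the standard shift functor.

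I do not expect a genuine obstacle: the argument is a chain of short, essentially bookkeeping, computations translating between complexes of $\kep$-modules and sequences of $\kk$-vector spaces. The one point requiring care---the ``hard part'', such as it is---is getting signs and indices to match so that the chain-homotopy relation on $\Kmi$ corresponds \emph{exactly} to the cokernel $\cok(\sm[\SHom_{\Sq}]^{-1})$ occurring in the definition of $\Homep_{\Sq}$, rather than to some shifted variant. A secondary nuisance is that the identification $M^i\iso V^i\otimes_\kk\kep$ is canonical only up to isomorphism; to obtain an honest, natural functor one either fixes these identifications once and for all, or reconstructs $\fF$ from the intrinsic data $M^i\otimes_\kep\kk$ together with the map $M^i/\ep M^i\to\ep M^{i+1}$ induced by $\sm[\sq{M}]^i$.
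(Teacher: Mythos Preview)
Your proposal is correct and follows essentially the same approach as the paper's proof: identify $\sq{M}\in\Kmi$ with the sequence $\sq{V}$ given by $V^i=M^i\otimes_{\kep}\kk$ and $\sm[\sq{V}]^i$ the $\ep$-part of $\sm[\sq{M}]^i$, then check that a chain map corresponds to $\sq{f}_1+[\sq{f}_\ep]$ and that null-homotopies kill exactly $\sq{f}_1$ together with the image of $\sm[\SHom_\Sq]^{-1}$. The paper's version is terser, but the content and the order of the steps match yours.
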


\begin{proof}
By \autoref{freemod} every object $\sq{M}\in\Kmi$ corresponds to an object $\sq{V}\in\Sq$ such that $V^i:=M^i\otimes_{\kep}\kk$ and $\sm[\sq{M}]^i$ is represented by $\ep\sm[\sq{V}]^i$, for every $i\in\ZZ$. On the other hand, given also $\sq{N}\in\Kmi$ corresponding to $\sq{W}\in\Sq$, a morphism $\sq{f}\colon\sq{M}\to\sq{N}$ in $\C$ can be identified with a sequence $\{f^i_1+\ep f^i_\ep\}_{i\in\ZZ}$ with $f^i_1,f^i_\ep\in\Hom_\kk(V^i,W^i)$ such that $\sq{f_1}\in\Hom_\Sq(\sq{V},\sq{W})$. Moreover such a morphism is homotopic to $0$ if and only if $\sq{f_1}=0$ and $\sq{f_\ep}\in\im(\sm[\SHom_\Sq(\sq{V},\sq{W})]^{-1})$. It follows that there is a natural isomorphism $\Hom_{\Kmi}(\sq{M},\sq{N})\iso\Hom_{\SQ}(\sq{V},\sq{W})$. It is also clear that compositions and shifts in the two categories are identified.
\end{proof}

Using the equivalence above, we can actually define the full subcategories of $\SQ$:
\begin{itemize}
\item $\SQa$ corresponding to $\Kam$;
\item $\SQp$ corresponding to $\Kpm$.
\end{itemize}

\begin{remark}\label{triangles}
As in every triangulated category, a morphism $h\colon\sq{V}\to\sq{W}$ of $\SQ$ can be extended to a distinguished triangle
\[
\sh[-1]{\sq{W}}\mor{f}\sq{U}\mor{g}\sq{V}\mor{h}\sq{W}
\]
(which is unique up to isomorphism). Using (beyond the equivalence of \autoref{KmiSQ}) the explicit computations in the proof of \autoref{minimal} in order to replace the mapping cone in $\Kfr$ with an isomorphic object of $\Kmi$, it is not hard to prove that such a triangle satisfies the following properties. If $h=\sq{h}_1+[\sq{h}_\ep]$, then $U^i=\ker(h^i_1)\oplus\cok(h^{i-1}_1)$ and
\[
\sm[\sq{U}]^i=
\begin{pmatrix}
\alpha^i & 0 \\
-\gamma^i & -\beta^{i-1} 
\end{pmatrix}
\colon U^i=\ker(h^i_1)\oplus\cok(h^{i-1}_1)\to U^{i+1}=\ker(h^{i+1}_1)\oplus\cok(h^i_1),
\]
where $\alpha^i$ is the restriction of $\sm[\sq{V}]^i$, $\beta^{i-1}$ is induced by $\sm[\sq{W}]^{i-1}$ and $\gamma^i$ is the composition $\ker(h^i_1)\mono V^i\mor{h^i_\ep}W^i\epi\cok(h^i_1)$. On the other hand, $f^i_1$ and $g^i_1$ are the natural morphisms, whereas $f^i_\ep$ and $g^i_\ep$ factor, respectively, through $\im(h^{i-1}_1)$ and $\im(h^i_1)$.

In particular, if $h=[\sq{h}_\ep]$, then $f=\sq{f}_1$, $g=\sq{g}_1$ and there is a short exact sequence
\[
0\to\sh[-1]{\sq{W}}\mor{f}\sq{U}\mor{g}\sq{V}\to0
\]
in $\Sq$. One can also check that the element of $\Ext^1_\Sq(\sq{V},\sh[-1]{\sq{W}})$ defined by the isomorphism class of this sequence corresponds to $h\in\Homep_\Sq(\sq{V},\sq{W})$ under the isomorphism of \autoref{Ext1}. Conversely, it should then be clear that every short exact sequence of $\Sq$ extends to a distinguished triangle of $\SQ$ by adding, as a third morphism, the one corresponding to the isomorphism class of the sequence.
\end{remark}

\subsection{Derived categories}\label{subsec:derived}

Set now $\Da:=\Da(\Mod{\kep})$, for $?=sb,b,+,-,\emptyset$. As the notation is certainly not standard, we should explain what is the `strictly bounded' derived category $\Dsb$. We refer to \cite[Example 8.4 and Definition 8.5]{Nee} for its definition in a more general setting. Here, it suffices to say that, when $\ca$ is an abelian category with  a finitely-presented projective generator, then $\Dsb(\ca)$ is the full subcategory of $\Dm(\ca)$ corresponding to $\Kb(\Proj(\ca))$ under the natural equivalence between $\Dm(\ca)$ and $\Km(\Proj(\ca))$. Here $\Proj(\ca)$ denotes the full subcategory of $\ca$ consisting of projective objects. Observe that $\Dsb(\ca)\subseteq\Db(\ca)$, with equality if and only if $\ca$ has finite homological dimension.

We also denote by $\Dafg$, for $?=b,-$,  the full subcategory of $\Da$ whose objects have cohomologies in $\fMod{\kep}$ (which can be identified with $\Da(\fMod{\kep})$) and by $\Dcp=\Dp(\kep)$ the full subcategory of compact objects of $\D$.

\begin{lem}\label{KD}
The natural functor $\Kpm\to\D$ is an exact equivalence. Moreover, there is a semiorthogonal decomposition $\Kmi=\ort{\Kam,\Kpm}$.
\end{lem}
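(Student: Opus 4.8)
The plan is to deduce both assertions from two inputs: the fact that an arbitrary complex of $\kep$-modules admits an h-projective resolution all of whose terms are free (standard; over $\kep$ projective and free modules coincide by \autoref{freemod}(i), and one builds such resolutions by the usual argument for possibly unbounded complexes over a ring), together with the ``minimalisation'' furnished by \autoref{minimal}. First I would record that the composite $\Khp\hookrightarrow\K\to\D$ is an exact equivalence: full faithfulness is precisely the definition of $\Khp=\lort{\Kac}$, and essential surjectivity is exactly the existence of h-projective resolutions. Restricting along the inclusion $\Kpf\subseteq\Khp$, the functor $\Kpf\to\D$ is then fully faithful, and it remains essentially surjective because the resolutions produced above lie in $\Kpf$; hence $\Kpf\to\D$ is an exact equivalence. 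Since by \autoref{pmpf} the inclusion $\Kpm\subseteq\Kpf$ is an equivalence, composing gives the asserted exact equivalence $\Kpm\to\D$.

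For the semiorthogonal decomposition, I would first note that $\Kam$ and $\Kpm$ are full triangulated subcategories of the triangulated category $\Kmi$ (\autoref{pmpf}), and that the semiorthogonality $\Hom_{\Kmi}(\Kpm,\Kam)=0$ is simply the restriction of $\Hom_\K(\Khp,\Kac)=0$, which holds by the definition of $\Khp$. What remains is to construct, for every $\sq{M}\in\Kmi$, a distinguished triangle $\sq{P}\to\sq{M}\to\sq{A}\to\sh{\sq{P}}$ of $\Kmi$ with $\sq{P}\in\Kpm$ and $\sq{A}\in\Kam$. Here I would take a quasi-isomorphism $\sq{Q}\to\sq{M}$ with $\sq{Q}\in\Kpf$ (first input), and replace $\sq{Q}$ by a homotopy-equivalent $\sq{P}\in\Kmi$ (\autoref{minimal}); as h-projectivity is preserved under homotopy equivalence, $\sq{P}\in\Khp\cap\Kmi=\Kpm$ and the induced map $\sq{P}\to\sq{M}$ is still a quasi-isomorphism. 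Completing it to a triangle $\sq{P}\to\sq{M}\to\sq{C}\to\sh{\sq{P}}$ in $\K$, the object $\sq{C}$ is acyclic (being the cone of a quasi-isomorphism) with free terms $C^i=M^i\oplus P^{i+1}$, so $\sq{C}\in\Kac\cap\Kfr=\Kaf$; minimalising $\sq{C}$ via \autoref{minimal} replaces it by a homotopy-equivalent, necessarily acyclic object $\sq{A}\in\Kmi$, that is $\sq{A}\in\Kam$. The resulting isomorphic triangle $\sq{P}\to\sq{M}\to\sq{A}\to\sh{\sq{P}}$ has all vertices in $\Kmi$, hence is distinguished in $\Kmi$, and it exhibits $\Kmi=\ort{\Kam,\Kpm}$.

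I do not expect a serious obstacle here: the argument is essentially a matter of combining \autoref{minimal} with the standard formalism of h-projective resolutions. The one point that requires genuine care is the bookkeeping---ensuring that the chosen resolution $\sq{Q}$, the mapping cone $\sq{C}$, and their minimalisations remain inside the subcategories $\Kfr$, $\Kpm$, $\Kam$ as needed; the crucial closure property being that a finite direct sum of free $\kep$-modules is free, so that cones of morphisms between objects of $\Kfr$ stay in $\Kfr$. The only input external to this paper is the (standard) existence of h-projective resolutions with free terms for arbitrary, possibly unbounded, complexes over a ring.
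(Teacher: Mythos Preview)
Your argument is correct and follows the same overall line as the paper's. The one point of divergence is how you show that $\Kpf\to\D$ is essentially surjective: the paper routes through the subcategory $\Kco$ of cofibrant complexes for the projective model structure on $\C$, using that $\Kco\subseteq\Kpf$ (citing \cite[Lemma~2.3.6,~2.3.8]{Hov}) and that cofibrant replacement makes the inclusion $\Kco\subseteq\Khp$ an equivalence; you instead appeal directly to the standard construction of h-projective resolutions with free terms over a ring. Your route is more self-contained and avoids model-category language, while the paper's delegates the work to a citation. For the semiorthogonal decomposition the two proofs coincide---the paper's sentence ``we can assume $\sq{P}\in\Kpm$ \dots\ and then also $\sq{N}\in\Kam$'' is exactly your explicit two-step minimalisation, compressed.
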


\begin{proof}
It is well known that there is a semiorthogonal decomposition
\begin{equation}\label{semiort}
\K=\ort{\Kac,\Khp},
\end{equation}
whence the composition of the natural (exact) functors $\Khp\to\K\to\D=\K/\Kac$ is an equivalence. Taking into account that $\Kco\subseteq\Kpf$ (by \cite[Lemma 2.3.6, 2.3.8]{Hov}) and that the inclusion $\Kco\subseteq\Khp$ is an equivalence (because for every $\sq{M}\in\Khp$ a cofibrant replacement $\sq{N}\to\sq{M}$ is a quasi-isomorphism in $\Khp$, and so it is an isomorphism), we deduce that the inclusion $\Kpf\subseteq\Khp$ is an equivalence. This concludes the proof of the first statement, since, by \autoref{pmpf}, also the inclusion $\Kpm\subseteq\Kpf$ is an equivalence. As for the last statement, it is clear that $\Kpm$ is contained in the left orthogonal of $\Kam$. Moreover, given $\sq{M}\in\Kmi$, by \eqref{semiort} there is a distinguished triangle $\sq{P}\to\sq{M}\to\sq{N}$ with $\sq{P}\in\Khp$ and $\sq{N}\in\Kac$. Actually we can assume $\sq{P}\in\Kpm$ (because the inclusion $\Kpm\subseteq\Khp$ is an equivalence), and then also $\sq{N}\in\Kam$.
\end{proof}

Combining \autoref{KD} and \autoref{KmiSQ}, we obtain the following result.

\begin{cor}\label{DSQp}
We have a semiorthogonal decomposition $\SQ=\ort{\SQa,\SQp}$. Moreover, there is an exact equivalence between $\SQp$ and $\D$.
\end{cor}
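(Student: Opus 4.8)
The plan is to transport everything through the equivalence of \autoref{KmiSQ}, which I will denote $\fK\colon\Kmi\isomor\SQ$. Recall that $\SQ$ was given its triangulated structure precisely so that $\fK$ is an exact equivalence, and that by construction $\SQa=\fK(\Kam)$ and $\SQp=\fK(\Kpm)$. Thus both assertions of the corollary are the images under $\fK$ of the corresponding assertions of \autoref{KD}.

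For the semiorthogonal decomposition: by \autoref{KD} we have $\Kmi=\ort{\Kam,\Kpm}$. An exact equivalence of triangulated categories sends a semiorthogonal decomposition to a semiorthogonal decomposition---it preserves the Hom-vanishing $\Hom(\Kpm,\Kam)=0$, sends distinguished triangles to distinguished triangles, and hence preserves the gluing condition. Applying $\fK$ and using $\SQa=\fK(\Kam)$, $\SQp=\fK(\Kpm)$ gives $\SQ=\ort{\SQa,\SQp}$.

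For the equivalence with $\D$: by \autoref{KD} the natural functor $\Kpm\to\D$ is an exact equivalence, while $\fK$ restricts to an exact equivalence $\Kpm\isomor\SQp$. Composing an inverse of the latter with the former produces the desired exact equivalence $\SQp\isomor\D$.

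I do not expect a genuine obstacle here: the entire content already sits in \autoref{KD} and \autoref{KmiSQ}, and the corollary follows by transport of structure. The only subtlety worth flagging is bookkeeping, namely being explicit that ``exact equivalence'' in the statement is understood with respect to the triangulated structure on $\SQ$ transported from $\Kmi$, and that $\SQa$ and $\SQp$ inherit their triangulated (respectively, admissible-subcategory) structure in the same way.
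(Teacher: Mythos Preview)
Your argument is correct and matches the paper's own approach: the corollary is stated there as an immediate consequence of \autoref{KD} and \autoref{KmiSQ}, and you have simply spelled out the transport-of-structure that this entails. There is nothing to add.
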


We will also denote by $\SQpa$, $\SQpafg$ and $\SQpc$ the strictly full (triangulated) subcategories of $\SQp$ corresponding (under the equivalence of \autoref{DSQp}) to $\Da$, $\Dafg$ and $\Dcp$, respectively. Moreover, $\Sqa$ will be the full subcategory of $\Sq$ with the same objects as $\SQa$; similarly for $\Sqpa$, $\Sqpafg$ and $\Sqpc$.

\begin{remark}\label{Sqachar}
An object $\sq{V}$ of $\Sq$ is in $\Sqa$ if and only if $\sm[\sq{V}]^i$ is an isomorphism, for every $i\in\ZZ$. More generally, it is easy to see that, if $\sq{V}$ corresponds to $\sq{M}\in\Kmi$, then $H^i(\sq{M})\iso\ker(\sm[\sq{V}]^i)\oplus\cok(\sm[\sq{V}]^{i-1})$ in $\Mod{\kk}$, for every $i\in\ZZ$.
\end{remark}

We can also prove the following.

\begin{prop}\label{Sqpchar}
An object $\sq{V}$ of $\Sq$ is in $\Sqp$ if and only if $K^i_\infty=V^i$ for every $i\in\ZZ$.
\end{prop}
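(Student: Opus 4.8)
The plan is to use the semiorthogonal decomposition $\SQ=\ort{\SQa,\SQp}$ of \autoref{DSQp}, which identifies $\SQp$ with the left orthogonal of $\SQa$; since $\Sqp$ and $\SQp$ (like $\Sqa$ and $\SQa$) have the same objects, an object $\sq{V}$ of $\Sq$ lies in $\Sqp$ if and only if $\Hom_{\SQ}(\sq{V},\sq{A})=0$ for every $\sq{A}\in\SQa$, and by \autoref{Sqachar} the objects of $\SQa$ are exactly the sequences $\sq{A}$ with $\sm[\sq{A}]^i$ an isomorphism for all $i$. Writing $\Hom_{\SQ}(\sq{V},\sq{A})=\Hom_{\Sq}(\sq{V},\sq{A})\oplus\Homep_{\Sq}(\sq{V},\sq{A})$, I would first note that the type-$\ep$ summand vanishes for every such $\sq{A}$: the shift $\sh[-1]{\sq{A}}$ again has all differentials isomorphisms, hence epimorphisms, so it is injective in $\Sq$ by \autoref{injchar}, and by \autoref{Ext1} the space $\Homep_{\Sq}(\sq{V},\sq{A})$ injects into $\Ext^1_{\Sq}(\sq{V},\sh[-1]{\sq{A}})=0$. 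So the statement reduces to: $\sq{V}\in\Sqp$ if and only if $\Hom_{\Sq}(\sq{V},\sq{A})=0$ for every sequence $\sq{A}$ all of whose differentials are isomorphisms.

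For the ``if'' direction I would argue pointwise. Let $\sq{f}=(f^i)\colon\sq{V}\to\sq{A}$ be a morphism of $\Sq$ with all $\sm[\sq{A}]^i$ isomorphisms. Given $x\in V^i$, the hypothesis $K^i_\infty=V^i$ yields some $n\ge i$ with $\sm[\sq{V}]^{i,n}(x)=0$; from $\sm[\sq{A}]^{i,n}\comp f^i=f^n\comp\sm[\sq{V}]^{i,n}$ we get $\sm[\sq{A}]^{i,n}(f^i(x))=0$, and since $\sm[\sq{A}]^{i,n}$ is a composition of isomorphisms this forces $f^i(x)=0$. Hence $\sq{f}=0$, so $\sq{V}\in\Sqp$.

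For the ``only if'' direction I would prove the contrapositive. Suppose $K^{i_0}_\infty\subsetneq V^{i_0}$ for some $i_0$, and form the filtered colimit $L:=\colim\bigl(\cdots\to V^i\mor{\sm[\sq{V}]^i}V^{i+1}\to\cdots\bigr)$, with structure maps $\lambda^i\colon V^i\to L$ satisfying $\lambda^{i+1}\comp\sm[\sq{V}]^i=\lambda^i$. Let $\sq{A}$ be the sequence with $A^i:=L$ and $\sm[\sq{A}]^i:=\id_L$ for all $i$; it lies in $\SQa$ by \autoref{Sqachar}, and $\sq{\lambda}:=(\lambda^i)$ is then a morphism $\sq{V}\to\sq{A}$ of $\Sq$. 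Viewed as a type-$1$ morphism of $\SQ$ it is a nonzero element of $\Hom_{\SQ}(\sq{V},\sq{A})$ with $\sq{A}\in\SQa$, whence $\sq{V}\notin\SQp$. The only point requiring (easy) justification is $\sq{\lambda}\neq0$, which follows from the standard description $\ker\lambda^i=\bigcup_{n\ge i}\ker(\sm[\sq{V}]^{i,n})=K^i_\infty$ of the kernel of a structure map into a filtered colimit of vector spaces, applied at $i=i_0$ where $K^{i_0}_\infty$ is proper in $V^{i_0}$. I do not expect a real obstacle: the two mildly delicate points are the vanishing of the type-$\ep$ part, handled by \autoref{Ext1} together with \autoref{injchar}, and the colimit computation of $\ker\lambda^i$.
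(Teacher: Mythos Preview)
Your proof is correct and follows the same overall strategy as the paper: you use the semiorthogonal decomposition from \autoref{DSQp}, eliminate the type-$\ep$ summand via \autoref{Ext1} and \autoref{injchar}, and handle the ``if'' direction by the same pointwise argument. The only difference is in the contrapositive: the paper takes $x\in V^i\setminus K^i_\infty$, observes (via \autoref{Homindec}) that it corresponds to a monomorphism $\sq{S}_{i,\infty}\mono\sq{V}$, and then uses injectivity of $\sq{S}_{-\infty,\infty}$ to extend the inclusion $\sq{S}_{i,\infty}\mono\sq{S}_{-\infty,\infty}$ along it, producing a nonzero map $\sq{V}\to\sq{S}_{-\infty,\infty}$. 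Your colimit construction achieves the same end by a direct computation; it is equally valid, and your identification $\ker\lambda^i=K^i_\infty$ is the standard description of kernels of structure maps into a sequential colimit in $\Mod{\kk}$. The paper's route has the minor advantage of landing in the concrete object $\sq{S}_{-\infty,\infty}$ and reusing the injectivity already established, while yours avoids invoking injectivity a second time.
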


\begin{proof}
By \autoref{DSQp} $\sq{V}\in\Sqp$ if and only if $\Hom_{\SQ}(\sq{V},\sq{W})=0$ for every $\sq{W}\in\Sqa$. Now, by \autoref{Sqachar} $\sq{W}\in\Sqa$ if and only if $\sm[\sq{W}]^i$ is an isomorphism for every $i\in\ZZ$. When $\sq{W}$ is of this form, $\sh[-1]{\sq{W}}$ in injective in $\Sq$ by \autoref{injchar}, and so $\Ext^1_\Sq(\sq{V},\sh[-1]{\sq{W}})=0$. Remembering \autoref{Ext1}, this means $\Hom_{\SQ}(\sq{V},\sq{W})=\Hom_\Sq(\sq{V},\sq{W})$.

Suppose first that $K^i_\infty(\sq{V})=V^i$ for every $i\in\ZZ$. We have to prove that, if $\sq{f}\colon\sq{V}\to\sq{W}$ is a morphism of $\Sq$ with $\sq{W}\in\Sqa$, then $f^i=0$ for every $i\in\ZZ$. This is true because obviously $f^i(K^i_n(\sq{V}))\subseteq K^i_n(\sq{W})=0$ (for $n\ge i$), whence $f^i(V^i)=f^i(K^i_\infty(\sq{V}))\subseteq K^i_\infty(\sq{W})=0$.

Conversely, suppose that $\sq{V}\in\Sq$ is some
object, $i\in\ZZ$ is an integer,
and $x\in V^i$ \emph{does not belong to}
$K^i_\infty(\sq{V})$. We need to show
$\sq{V}\notin\Sqp$. But by
\autoref{Homindec} the element
$x\in V^i$ corresponds to a map
$\chi:\sq{S}_{i,\infty}\to V$, and the
assumption that
$x\notin K^i_\infty(\sq{V})$ means that
the map $\chi:\sq{S}_{i,\infty}\to V$ is a monomorphism.
Because the object
$\sq{S}_{-\infty,\infty}\in\Sq$
is injective,
the (monomorphism)
$\sq{S}_{i,\infty}\mono\sq{S}_{-\infty,\infty}$
factors through $\chi$,
to give a nonzero map $\sq{V}\to\sq{S}_{-\infty,\infty}$
with $\sq{S}_{-\infty,\infty}\in\Sqa$.
\end{proof}

\begin{remark}\label{Sqpachar}
Using \autoref{Sqachar} it is immediate to show that $\sq{V}\in\Sqp$ is in $\Sqpu$ (respectively, in $\Sqpm$) if and only if there exists $n\in\ZZ$ such that $\sm^i$ is an isomorphism for every $i<n$ (respectively, $V^i=0$ for every $i>n$). Moreover, $\sq{V}\in\Sqpa$, for $?=b,-$, is in $\Sqpafg$ if and only if $\dim_\kk(V^i)<\infty$ for every $i\in\ZZ$. Finally, $\sq{V}\in\Sqpb$ (respectively, $\sq{V}\in\Sqpbfg$) is in $\Sqpsb$ (respectively, $\Sqpc$) if and only if there exists $n\in\ZZ$ such that $V^i=0$ for every $i<n$.
\end{remark}

Let us reconsider the characterization of indecomposable objects in \autoref{subsec:indec}. We keep the notation introduced there.

\begin{remark}\label{indec2}
By \autoref{Sqpchar} and \autoref{Sqpachar} $\sq{S}_{a,b}\in\Sqp$ if and only if $\sq{S}_{a,b}\in\Sqpbfg$ if and only if $b<\infty$, whereas $\sq{S}_{a,b}\in\Sqpc$ if and only if $a>-\infty$ and $b<\infty$. Observe also that, denoting by $\cc$ any of the subcategories $\Sqpa$, $\Sqpafg$ or $\Sqpc$ of $\Sq$, clearly $\cc$ is closed under direct summands in $\Sq$ and the inclusion of $\cc$ in $\Sq$ preserves the coproducts which exist in $\cc$. It follows that an object of $\cc$ is indecomposable (respectively, completely decomposable) in $\cc$ if and only if it is indecomposable (respectively, completely decomposable) in $\Sq$. 
\end{remark}

Let us set $\Sqpad:=\Sqpa\cap\Sqd$, for $?=sb,b,+,-,\emptyset$, and $\Sqpafgd:=\Sqpafg\cap\Sqd$, for $?=b,-$. This notation will be extended to $\SQ$ in an obvious way.

\begin{lem}\label{indchar2}
We have the equalities $\Sqpad=\Sqpa$, for $?=sb,b,+$, and $\Sqpafgd=\Sqpafg$, for $?=b,-$.
\end{lem}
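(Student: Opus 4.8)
The plan is to reduce everything to the two inclusions $\Sqpu\subseteq\Sqd$ and $\Sqpmfg\subseteq\Sqd$, and then to verify, in each of these two cases, the splitting criterion of \autoref{seqdec}. Since $\Sqpad\subseteq\Sqpa$ and $\Sqpafgd\subseteq\Sqpafg$ hold trivially, in each case we only need the reverse inclusion; and by \autoref{indchar} an object of $\Sq$ lies in $\Sqd$ precisely when it is completely decomposable, so the task is to show that every object of $\Sqpa$ (for $?=sb,b,+$), respectively of $\Sqpafg$ (for $?=b,-$), is completely decomposable. Now $\Sqpsb\subseteq\Sqpb\subseteq\Sqpu$ and $\Sqpbfg\subseteq\Sqpmfg$, so it is enough to prove $\Sqpu\subseteq\Sqd$ and $\Sqpmfg\subseteq\Sqd$.

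For both inclusions I would argue uniformly. Given $\sq{V}$ in the relevant subcategory, the idea is to check that $\sq{V}$ satisfies the hypothesis guaranteeing that the short exact sequence of \autoref{seqdec} splits, namely: for every $i\in\ZZ$ there is $n_i<i$ with $I^i_n=I^i_{n+1}$ for all $n\le n_i$. Granting this, \autoref{seqdec} yields $\sq{V}\iso\sq{U}\oplus\sq{W}$ with $\sq{U}\in\Sqd[\ZZ,\ZZ\cup\{\infty\}]$ and $\sq{W}\in\Sqd[\{-\infty\},\ZZ\cup\{\infty\}]$; both summands belong to $\Sqd$, and since $\Sqd$ is a strictly full subcategory closed under direct sums we conclude $\sq{V}\in\Sqd$, as wanted. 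The elementary fact underlying the verification is that $\sm^{n,i}=\sm^{n+1,i}\comp\sm^n$ for $n<i$, so that $I^i_n=\im(\sm^{n,i})\subseteq\im(\sm^{n+1,i})=I^i_{n+1}$, with equality as soon as $\sm^n$ is surjective; in particular, for fixed $i$ the subspaces $I^i_n\subseteq V^i$ form, as $n$ decreases, a descending chain $V^i=I^i_i\supseteq I^i_{i-1}\supseteq I^i_{i-2}\supseteq\cdots$.

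The two verifications then go as follows. If $\sq{V}\in\Sqpu$, by \autoref{Sqpachar} there is $N\in\ZZ$ such that $\sm^j$ is an isomorphism (hence surjective) for every $j<N$; taking $n_i:=\min(i,N)-1$, every $n\le n_i$ satisfies $n<N$, so $\sm^n$ is surjective and $I^i_n=I^i_{n+1}$. If instead $\sq{V}\in\Sqpmfg$, then $\dim_\kk V^i<\infty$ for every $i$ by \autoref{Sqpachar}, so the descending chain $(I^i_n)_{n\le i}$ of subspaces of the finite-dimensional space $V^i$ must stabilize: picking $M\le i$ with $I^i_n=I^i_M$ for all $n\le M$ and setting $n_i:=M-1$ gives $I^i_n=I^i_{n+1}$ for all $n\le n_i$.

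I expect no serious obstacle here: this is a short application of the structure theory already in place (\autoref{seqdec}, \autoref{Sqpachar}, \autoref{indchar}). The only points requiring a little care are the index bookkeeping in the splitting criterion (checking that the chosen $n_i$ is strictly below $i$ and that the relevant chain is genuinely monotone) and the harmless remark that $\Sqd[\ZZ,\ZZ\cup\{\infty\}]$ and $\Sqd[\{-\infty\},\ZZ\cup\{\infty\}]$ are contained in $\Sqd$, which is immediate from the definitions.
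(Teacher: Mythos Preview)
Your proof is correct and follows essentially the same approach as the paper: reduce to the cases $\Sqpu$ and $\Sqpmfg$, then verify the splitting hypothesis of \autoref{seqdec} using \autoref{Sqpachar} (isomorphisms $\sm^n$ for $n\ll0$ in the first case, finite-dimensionality of each $V^i$ in the second). You have simply filled in more of the index bookkeeping than the paper does.
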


\begin{proof}
If $\sq{V}\in\Sqpu$ (respectively, $\sq{V}\in\Sqpmfg$), then, by \autoref{Sqpachar}, $\sm^n$ is an isomorphism for $n\ll0$ (respectively, $\dim_\kk(V^i)<\infty$ for every $i\in\ZZ$). In both cases this clearly implies that, for every $i\in\ZZ$, there exists $n_i<i$ such that $I^i_n=I^i_{n+1}$ for $n\le n_i$. We conclude by \autoref{seqdec} that $\sq{V}\iso\sq{U}\oplus\sq{W}\in\Sqd$.
\end{proof}

\begin{remark}\label{nondec}
We have $\Sqpmd\subsetneq\Sqpm$ (hence also $\Sqpd\subsetneq\Sqp$). Indeed, with the notation of \autoref{projchar}, $T^{\le n}\in\Sqpm$ for every $n\in\ZZ$ by \autoref{Sqpachar}, but we claim that $T^{\le n}\not\in\Sqd$ (observe that this implies also $\sq{T}\not\in\Sqd$). The statement being independent of $n$, we assume on the contrary $T^{\le0}\in\Sqd$. By \autoref{indchar} there exist $V_{a,b}\in\Mod{\kk}$ for $a\le b\le0$ such that $T^{\le0}\iso\bigoplus_{-\infty\le a\le b\le0}(\sq{S_{a,b}}\otimes_\kk V_{a,b})$, which implies $\kk^\NN=T^0\iso\bigoplus_{-\infty\le a\le0}V_{a,0}$. Now, for every $m\in\NN$ we have
\[
\bigoplus_{-m<a\le0}V_{a,0}\iso T^0/I^0_{-m}=\kk^\NN/\kk^{\NN_{\ge m}}\iso\kk^m,
\]
whereas $V_{-\infty,0}=0$ (because $\bigcap_{m\in\NN}I^0_{-m}=0$). This gives the contradiction that $\kk^\NN\iso\bigoplus_{-\infty\le a\le0}V_{a,0}$ has countable dimension over $\kk$.
\end{remark}

\section{Dg categories and enhancements}\label{sec:enh}

This section collects some basic notions related to dg categories and incorporates a discussion about uniqueness and strong uniqueness of enhancements for triangulated categories of geometric significance.

\subsection{Definitions}\label{subsec:enhdefex}

In full generality, in this section we assume that $\kk$ is a commutative ring.

\begin{definition}\label{def:dg}
(i) A \emph{dg category} is a $\kk$-linear category $\cc$ whose morphism spaces $\Hom_\cc\left(A,B\right)$ are complexes of $\kk$-modules and the composition maps $\Hom_\cc(B,C)\otimes_{\kk}\Hom_\cc(A,B)\to\Hom_\cc(A,C)$ are morphisms of complexes, for all $A,B,C$ in $\cc$.

(ii) A \emph{dg functor} $\fF\colon\cc_1\to\cc_2$ between two dg categories is a $\kk$-linear functor such that the maps $\Hom_{\cc_1}(A,B)\to\Hom_{\cc_2}(\fF(A),\fF(B))$ are morphisms of complexes, for all $A,B$ in $\cc_1$.
\end{definition}

For a dg category $\cc$, we can form its \emph{homotopy category} $H^0(\cc)$ which has the same objects as $\cc$ while $\Hom_{H^0(\cc)}(A,B):=H^0(\Hom_\cc(A,B))$, for all $A,B$ in $\cc$. A dg functor $\fF\colon\cc_1\to\cc_2$ induces a $\kk$-linear functor $H^0(\fF)\colon H^0(\cc_1)\to H^0(\cc_2)$ and a dg functor $\fF$ is a \emph{quasi-equivalence} if the maps $\Hom_{\cc_1}(A,B)\to\Hom_{\cc_2}(\fF(A),\fF(B))$ are quasi-isomorphisms, for all $A,B$ in $\cc_1$, and $H^0(\fF)$ is an equivalence.

\begin{ex}\label{ex:dgcompl}
(i) Assume that $\ca$ is a $\kk$-linear category. We can form the dg category $\dgC(\ca)$, whose objects are (unbounded) complexes of objects in $\ca$. As graded modules, morphisms are defined as
\[
\Hom_{\dgC(\ca)}(A,B)^n:=\prod_{i\in\ZZ}\Hom_\ca(A^i,B^{n+i})
\]
for every $A,B\in\dgC(\ca)$ and for every $n\in\ZZ$. The composition of morphisms is the obvious one, while the differential is defined on a homogeneous element $f\in\Hom_{\dgC(\ca)}(A,B)^n$ by $d(f):=d_B\comp f-(-1)^nf\comp d_A$. Clearly we can similarly define $\dgC^?(\ca)$, for $?=b,+,-,\emptyset$, by imposing the obvious bounds to the complexes.

(ii) As dg functors between two dg categories $\cc_1$ and $\cc_2$ form a dg category $\dgHom(\cc_1,\cc_2)$, for every dg category $\cc$ we can construct a much larger $\dgMod{\cc}:=\dgHom(\cc\opp,\dgC(\Mod{\kk}))$ whose objects are called \emph{(right) dg $\cc$-modules}. The category $H^0(\dgMod{\cc})$ has a natural triangulated structure (see \cite[Section 2.2]{K3}). On the other hand, given a dg category $\cc$, the map defined on objects by $A\mapsto\Hom_\cc(\farg,A)$ extends to a fully faithful dg functor $\dgYon\colon\cc\to\dgMod{\cc}$ called the \emph{dg Yoneda embedding}.
\end{ex}

Let us denote by $\dgCat$ the category of all (small) dg categories (linear over $\kk$). Its localization, which is obtained by inverting all quasi-equivalences in $\dgCat$, will be denoted by $\Hqe$. Note that, if $\cc_1$ and $\cc_2$ are dg categories and $f\colon\cc_1\to\cc_2$ is a morphism in $\Hqe$, then one gets an induced functor $H^0(f)\colon H^0(\cc_1)\to H^0(\cc_2)$, which is well defined up to isomorphism. Note that, if $f$ is an isomorphism in $\Hqe$, then $H^0(f)$ is an equivalence.

\begin{definition}\label{def:dgpreptr}
(i) A dg category $\cc$ is \emph{pretriangulated} if the essential image of $H^0(\dgYon)$ is a full triangulated subcategory of $H^0(\dgMod{\cc})$.

(ii) Given a triangulated category $\ct$ an \emph{enhancement} of $\ct$ is a pair $(\cc,\fF)$, where $\cc$ is a pretriangulated dg category and $\fF\colon H^0(\cc)\to\ct$ is a triangulated equivalence.

(iii) A triangulated category $\ct$ is \emph{algebraic} if it has an enhancement.
\end{definition}

\begin{ex}\label{ex:models}
(i) If $\ca$ is an additive ($\kk$-linear) category, it is easy to see that $\dgC^?(\ca)$ is pretriangulated and there is a natural triangulated equivalence $H^0(\dgC^?(\ca))\iso\K^?(\ca)$. Thus the triangulated category $\K^?(\ca)$ is algebraic, for $?=b,+,-,\emptyset$.

(ii) Assume now that $\ca$ is an abelian category, and denote by $\Acya(\ca)\subseteq\Ka(\ca)$ the full triangulated subcategory consisting of acyclic complexes. Then the derived category $\D^?(\ca)$ of $\ca$, which is the Verdier quotient $\Ka(\ca)/\Acya(\ca)$, is algebraic. Here, as usual, $?=b,+,-,\emptyset$. Indeed, it is true more generally that, if $\ct$ is an algebraic triangulated category and $\cs$ is a full triangulated subcategory of $\ct$, then $\ct/\cs$ is algebraic, as well. Explicitly, an enhancement of $\ct/\cs$ can be obtained as the \emph{Drinfeld quotient} (see \cite{Dr}) of an enhancement of $\ct$ by the induced (by restriction) enhancement of $\cs$.
\end{ex}

\subsection{Uniqueness}\label{subsec:enhuniq}

For an algebraic triangulated category, it is a natural question to ask how many dg enhancements it possesses. First of all, we want to clarify in which sense they may be unique.

\begin{definition}\label{def:dguniq}
Let $\ct$ be an algebraic triangulated category.

(i) The category $\ct$ \emph{has a unique enhancement} if, given two enhancements $(\cc_1,\fF_1)$ and $(\cc_2,\fF_2)$ of $\ct$, the dg categories $\cc_1$ and $\cc_2$ are isomorphic in $\Hqe$.

(ii) The category $\ct$ \emph{has a strongly unique enhancement} if, given two enhancements $(\cc_1,\fF_1)$ and $(\cc_2,\fF_2)$ of $\ct$, there exists an isomorphism  $f\colon\cc_1\to\cc_2$ in $\Hqe$ such that $\fF_2\comp H^0(f)\iso\fF_1$.
\end{definition}

The more general result about uniqueness of enhancements which is now available in the literature is the following.

\begin{thm}[\cite{CNS2}, Theorems A and B]\label{thm:main2}
{\rm (i)} If $\ca$ is an abelian category, then $\D^?(\ca)$ has a unique enhancement, for $?=b,+,-,\emptyset$.

{\rm (ii)} Let $X$ be a quasi-compact and quasi-separated scheme. Then the categories $\D_\mathbf{qc}^?(X)$ and $\Dp(X)$ have a unique dg enhancement, for $?=b,+,-,\emptyset$.
\end{thm}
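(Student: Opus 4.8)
The proof is the content of \cite{CNS2}; here we only sketch the plan. The idea is to reduce both parts to a single statement---that $\D^?(\cG)$ has a unique enhancement whenever $\cG$ is a Grothendieck category---and then to establish that. Granting it, part (ii) follows: for $X$ quasi-compact and quasi-separated the category $\Qcoh(X)$ is Grothendieck and the natural functor $\D^?(\Qcoh(X))\to\Dqa(X)$ is an equivalence, so the quasi-coherent cases are instances of (i); moreover $\Dp(X)$ is the subcategory of compact objects of $\Dq(X)$, which is compactly generated, so any isomorphism in $\Hqe$ between two enhancements of $\Dq(X)$ induces a triangulated self-equivalence of $\Dq(X)$, which preserves compactness and hence restricts to an isomorphism between the induced enhancements of $\Dp(X)$.

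For part (i) over an arbitrary abelian category $\ca$, the plan is to embed $\ca$ into a Grothendieck category. A standard choice is $\cG:=\Lex(\ca\opp,\mathbf{Ab})$, the category of left-exact functors, into which the Yoneda functor $\ca\hookrightarrow\cG$ is exact and fully faithful with image closed under subobjects; one then identifies $\D^?(\ca)$ with the full triangulated subcategory of $\D^?(\cG)$ of complexes with cohomology in $\ca$. Deducing ``$\D^?(\ca)$ has a unique enhancement'' from ``$\D^?(\cG)$ does'' requires a descent principle for uniqueness along such inclusions---one of the technical pillars of the \cite{CNS} toolbox, and by no means formal.

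It remains to treat $\D^?(\cG)$ for $\cG$ Grothendieck. Here each of the four categories has a canonical enhancement: since every complex over $\cG$ admits an h-injective resolution, $\D(\cG)$ is the homotopy category of h-injective complexes, $\D^+(\cG)\simeq\K^+(\Inj(\cG))$, and the bounded and bounded-above variants sit inside $\D(\cG)$ as the full subcategories with appropriately bounded cohomology; in each case one takes the corresponding full dg subcategory of $\dgC(\cG)$. Proving this is the only enhancement is the heart of the matter. One natural route uses the Gabriel--Popescu theorem to present $\cG$ as a reflective localization of a module category $\Mod{R}$: then $\D(\cG)$ is a Verdier localization of $\D(R)$, for which uniqueness is comparatively soft---in any enhancement the compact generator $R$ lifts to an object whose endomorphism dg algebra has cohomology concentrated in degree $0$, hence is formal---and one transports uniqueness along the localization and then down to the bounded variants.

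The skeleton above is essentially formal; the real content of \cite{CNS2}, and the main difficulty, lies in the transport principles it invokes: that uniqueness of enhancements is inherited by the subcategory of compact objects, by full triangulated subcategories defined by (co)homological or boundedness conditions, and by Verdier localizations. None of these is automatic; each rests on delicate lifting and gluing constructions for dg functors, in the spirit initiated in \cite{LO}, and this is where essentially all the work goes. Additional care is required in the unbounded case $?=\emptyset$---where a naive embedding need not make $\ca$ a Serre subcategory of $\cG$, so $\D(\ca)\to\D(\cG)$ need not be fully faithful---and in the comparison $\D(\Qcoh(X))\simeq\Dq(X)$ for non-noetherian $X$.
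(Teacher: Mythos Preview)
The paper does not give its own proof of this theorem: it is quoted as a result from \cite{CNS2} (and in the introduction attributed to \cite{CNS}), with no argument beyond a brief historical paragraph. So there is nothing in the paper to compare your sketch against.

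That said, your sketch deserves a warning. The ``transport principles'' you invoke---uniqueness of enhancements passing to Verdier localizations, to full triangulated subcategories cut out by cohomological conditions, and to compact objects---are \emph{not} established general theorems; in particular, stability of uniqueness under Verdier quotients is not known. So the reduction ``$\D(\cG)$ has unique enhancement $\Rightarrow$ $\D^?(\ca)$ has unique enhancement'' via Gabriel--Popescu plus localization plus restriction is not a valid proof scheme, however natural it sounds. The actual argument in \cite{CNS} proceeds quite differently: it works directly with an arbitrary enhancement $(\cc,\fF)$ of $\D^?(\ca)$, lifts the quotient functor $\K^?(\ca)\to\D^?(\ca)$ to the dg level, and controls the comparison using the full subcategory $\V^?(\ca)\subset\K^?(\ca)$ of complexes with zero differential (you can see a shadow of this machinery in the present paper's proof of \autoref{stronguniqher}). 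There is no detour through Grothendieck categories or module categories for part (i), and part (ii) is handled by separate arguments specific to the geometric setting rather than by a formal reduction to (i).
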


Here $\D_\mathbf{qc}^?(X)$ denotes the full subcategory of $\D_\mathbf{qc}(X)$ which consists of complexes of $\mathcal{O}_X$ modules with quasi-coherent cohomology. On the other hand, $\Dp(X)$ is the full subcategory of  $\D_\mathbf{qc}(X)$ consisting of \emph{perfect} complexes (i.e.\ complexes in $\D_\mathbf{qc}(X)$ which are locally quasi-isomorphic to bounded complexes of locally free sheaves of finite rank). Alternatively, $\Dp(X)$ coincides with the category $\D_\mathbf{qc}(X)^c$ of compact objects in $\D_\mathbf{qc}(X)$. Recall that, for a triangulated category $\ct$, an object $C\in\ct$ is  
\emph{compact} if $\Hom_\ct(C,-)$ respects those coproducts that exist in $\ct$.

It must be said that \autoref{thm:main2} comes at the end of a long story. Indeed, in \cite{BLL} it was conjectured that, for a smooth projective scheme over a field, the bounded derived category of coherent sheaves should have a unique enhancement. The conjecture was proved in \cite{LO}and these results were then extended in \cite{CS6}. Part of the results in the latter paper have been reproved in \cite{GR}. Finally \cite{A1} proposed a different approach to the problem by means of techniques which use the theory of stable $\infty$-categories.

As for strong uniqueness of enhancements, the first results were obtained in \cite{LO}, and later in a series of papers \cite{CS1}, \cite{Ol} and \cite{Lo}). But, despite some special cases, the general picture about strongly uniqueness remains elusive.

\subsection{Strong uniqueness for hereditary categories}\label{subsec:stronguniqher}

In this section we cover yet another case of strong uniqueness of enhancements: the derived category of a hereditary category, while in the next section we prove our main results concerning various triangulated categories associated to the category of modules over the dual numbers.

Let us first recall the following.

\begin{definition}\label{def:hereditary}
An abelian category $\cb$ is \emph{hereditary}, if $\Ext_\cb^2(A,B)=0$, for all $A,B\in\cb$.   
\end{definition}

We have the following examples of such abelian categories.

\begin{ex}\label{ex:her}
(i) If $R$ is a left hereditary ring (meaning that all left ideals of $R$ are projective $R$-modules), then the abelian category $\Mod{R}$ of left $R$-modules is hereditary. The same is true for $\fMod{R}$ if $R$ is also left noetherian.

(ii) If $C$ is a smooth curve over a field, then the abelian categories $\Coh(C)$ of coherent sheaves and $\Qcoh(C)$ of quasi-coherent sheaves on $C$ are hereditary. 

(iii) The abelian category $\Sq$ is hereditary. Indeed, $\Sq$ has enough injectives (being a Grothendieck category), hence every object $\sq{V}$ of $\Sq$ can be embedded in a short exact sequence $0\to\sq{V}\to\sq{W}\mor{\sq{g}}\sq{U}\to0$ with $\sq{W}$ injective. Since $\sm[\sq{W}]^i$ (by \autoref{injchar}) and $g^{i+1}$ are surjective, also $\sm[\sq{U}]^i$ is surjective, for every $i\in\ZZ$. This proves that $\sq{U}$ is injective (again by \autoref{injchar}), whence $\sq{V}$ has an injective resolution of length $\le1$.
\end{ex}

We can then prove the following.

\begin{prop}\label{stronguniqher}
Let $\ca$ be a hereditary abelian category. Then $\D^?(\ca)$ has a strongly unique enhancement, for $?=b,+,-,\emptyset$.
\end{prop}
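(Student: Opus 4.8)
The plan is to reduce strong uniqueness for $\D^?(\ca)$ to the combination of two facts: the uniqueness theorem \autoref{thm:main2}(i), and the statement that every exact autoequivalence of $\D^?(\ca)$ admits a dg lift. As explained in the introduction, once uniqueness is known, strong uniqueness follows if every exact autoequivalence $\fG$ of $\ct = \D^?(\ca)$ lifts to an automorphism of an (equivalently, any) enhancement in $\Hqe$. So the core of the proof is: \emph{every exact autoequivalence of $\D^?(\ca)$, for $\ca$ hereditary, lifts to $\Hqe$.}

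First I would fix the standard enhancement $\cc = \dgC^?(\ca)$ of $\ca$, so that $H^0(\cc) \iso \D^?(\ca)$ after passing to the Drinfeld/Verdier quotient by acyclics as in \autoref{ex:models}(ii); call the resulting enhancement $\cE$. The key structural input from hereditariness is the classical splitting: for $\ca$ hereditary, every object $M \in \D^?(\ca)$ is isomorphic to $\bigoplus_{n} \sh[-n]{H^n(M)}$, i.e.\ each complex decomposes as the coproduct of the shifts of its cohomology objects (this uses $\Ext^2_\ca = 0$, which kills all the potential connecting maps/obstructions, via the standard argument with the truncation triangles $\tau_{\le n}M \to \tau_{\le n+1}M \to \sh[-n-1]{H^{n+1}(M)}$ splitting). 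This means $\D^?(\ca)$ is, in a precise sense, controlled by $\ca$ together with its shifts, and an exact functor out of it is essentially determined by what it does on the heart.

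Next, given an exact autoequivalence $\fG$ of $\D^?(\ca)$, I would argue it lifts. One clean way: $\fG$ restricts to an autoequivalence of the heart $\ca \subseteq \D^?(\ca)$ up to shift — more precisely, by the splitting, $\fG$ must send $\ca$ to $\sh[m]{\ca}$ for some fixed $m \in \ZZ$ (because indecomposable objects concentrated in one degree go to indecomposables concentrated in one degree, and connectedness-type arguments pin down a single shift $m$ on all of $\ca$; here one should be a little careful when $? = b$ or $? = \emptyset$ versus $? = +$, but the argument is uniform). Composing with $\sh[-m]{}$ we may assume $\fG$ preserves $\ca$, hence is induced by an exact autoequivalence $g$ of $\ca$ itself. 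But any exact autoequivalence of $\ca$ is induced by an additive autoequivalence, which visibly lifts to a dg automorphism of $\dgC^?(\ca)$ by applying $g$ termwise to complexes; this dg automorphism descends to the quotient enhancement $\cE$ and induces $\fG$ (up to the shift we removed, which is obviously realized dg-functorially) on $H^0$. Alternatively, and perhaps more in the spirit of \cite{CNS}, one can invoke a lifting criterion for exact functors between derived categories of abelian categories that only requires controlling the functor on the heart, which the splitting makes available. Either route gives the dg lift of $\fG$, and combined with \autoref{thm:main2}(i) yields strong uniqueness.

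The main obstacle, I expect, is making the reduction "$\fG$ preserves $\ca$ up to a single shift" fully rigorous and uniform in $? = b, +, -, \emptyset$: one must show the shift $m$ is the same for every object of $\ca$ (not a priori clear object-by-object), handle the possibility that $\ca$ has no nonzero projective/injective objects or is not connected, and verify that the resulting $g$ is genuinely exact (not merely additive) so that it visibly lifts. For the $? = -, +$ cases there is also a mild subtlety that infinite coproducts/products of shifts appear, but since the splitting $M \iso \bigoplus_n \sh[-n]{H^n(M)}$ is a \emph{bounded-below or bounded-above} sum in those cases it is still a genuine (co)product in the category and causes no trouble. Once the reduction is in place, the actual lifting is essentially formal, exactly because termwise application of an exact functor on complexes is a dg functor.
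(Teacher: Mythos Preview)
Your reduction to lifting every exact autoequivalence $\fG$ of $\Da(\ca)$ is correct, but the key step---claiming that $\fG$ preserves the heart $\ca$ up to a single shift---is false in general; this is not a matter of rigor but a genuine obstruction. Take $\ca=\Coh(E)$ for $E$ an elliptic curve over a field (hereditary by \autoref{ex:her}(ii)), and let $\fG$ be the Fourier--Mukai autoequivalence of $\Db(\Coh(E))$ with the Poincar\'e bundle as kernel. Then $\fG$ sends a skyscraper $\so_p$ (in degree $0$) to a degree-$0$ line bundle (still in degree $0$), but sends $\so_E$ (in degree $0$) to a shifted skyscraper concentrated in cohomological degree $1$. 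Since $\Hom_\ca(\so_E,\so_p)\ne0$, the ``connectedness'' argument you sketch cannot repair this: two objects of $\ca$ with a nonzero map between them land in different cohomological degrees. Hence $\fG$ is not isomorphic to $\sh[m]{}\comp\Da(g)$ for any $m\in\ZZ$ and any autoequivalence $g$ of $\ca$, and your proposed termwise construction of a dg lift simply does not apply to it.

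The paper's proof avoids this problem by not attempting to lift an \emph{arbitrary} autoequivalence. Instead it exploits a feature of the uniqueness proof in \cite{CNS}: given two enhancements $(\cc_1,\fF_1)$ and $(\cc_2,\fF_2)$, the isomorphism $f\colon\cc_1\to\cc_2$ in $\Hqe$ produced there can be chosen so that the resulting autoequivalence $\fG:=\fF_2\comp H^0(f)\comp\fF_1^{-1}$ of $\Da(\ca)$ already comes equipped with an isomorphism $\theta$ to the identity on the full subcategory $\Va(\ca)\subset\Ka(\ca)$ of complexes with zero differential. Hereditariness then enters only to say that such complexes exhaust $\Da(\ca)$ up to isomorphism; the remaining work is to check, via a roof argument and \cite[Proposition~5.2]{CNS}, that $\theta$ is natural also for $\Da(\ca)$-morphisms, not just $\Ka(\ca)$-morphisms. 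Thus one never has to understand what a general autoequivalence does to the heart; one only has to show that the \emph{specific} $f$ handed over by the uniqueness machinery already satisfies $\fF_2\comp H^0(f)\iso\fF_1$.
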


\begin{proof}
Let $(\cc_1,\fF_1)$ and $(\cc_2,\fF_2)$ be enhancements for $\Da(\ca)$. Since $\ca$ is an abelian category, \autoref{thm:main2} implies that there is an isomorphism $f\colon\cc_1\to\cc_2$ in $\Hqe$. More is true. If $\fQ\colon\Ka(\ca)\to\Da(\ca)$ is the natural quotient functor, let us consider the exact functors
\[
\fG:=\fF_2\comp H^0(f)\comp\fF_1^{-1}\colon\Da(\ca)\to\Da(\ca),\qquad \fH:=\fG\comp\fQ\colon\Ka(\ca)\to\Da(\ca).
\]
By the discussion in \cite[Section 5.1]{CNS} (see, in particular, \cite[Lemma 5.1]{CNS}), $f$ can be chosen in such a way that there exists an isomorphism $\theta\colon\fH\rest{\Va(\ca)}\to\fQ\rest{\Va(\ca)}$, where $\Va(\ca)$ (respectively $\Ba(\ca)$) is the full subcategory of $\Ka(\ca)$ (respectively $\Da(\ca)$) consisting of complexes with $0$ differential.

We need to show that $\fF_2\comp H^0(f)\iso\fF_1$, or, equivalently, that $\fG\iso\id$. Now, the fact that $\ca$ is hereditary implies that the inclusion of $\Ba(\ca)$ in $\Da(\ca)$ is an equivalence (see, for instance, \cite[Section 1.6]{Kr}). Thus it is enough to find an isomorphism $\fG\rest{\Ba(\ca)}\iso\id\rest{\Ba(\ca)}$, and we claim that $\theta$ also defines an isomorphism $\fG\rest{\Ba(\ca)}\to\id\rest{\Ba(\ca)}$ (note that the objects of $\Va(\ca)$ and $\Ba(\ca)$ coincide). So we have to prove that, for any morphism $g\colon A\to B$ in $\Ba(\ca)$, the square
\begin{equation}\label{eq:comm}
\xymatrix{
\fH(A)=\fG(A) \ar[rr]^-{\fG(g)} \ar[d]_-{\theta_A} & & \fG(B)=\fH(B) \ar[d]^-{\theta_B} \\
\fQ(A)=A \ar[rr]_-g & & B=\fQ(B)
}
\end{equation}
commutes in $\Da(\ca)$. As $g$ can be represented by a roof in $\Ka(\ca)$
\[
\xymatrix{
A \ar[r]^-{g_1} & C & B \ar[l]_-{g_2}
}
\]
(where $g_2$ is a quasi-isomorphism), by \cite[Proposition 5.2]{CNS} there exists an isomorphism $\widetilde\theta_C\colon\fH(C)\to\fQ(C)$ such that the diagram
\[
\xymatrix{
\fH(A) \ar[rr]^-{\fH(g_1)} \ar[d]_-{\theta_A} & & \fH(C) \ar[d]^-{\widetilde\theta_C} & & \fH(B) \ar[ll]_-{\fH(g_2)} \ar[d]^-{\theta_B}\\
\fQ(A) \ar[rr]_-{\fQ(g_1)} & & \fQ(C) & & \fQ(B) \ar[ll]^-{\fQ(g_2)}
}
\]
commutes in $\Da(\ca)$. Since $\fQ(g_2)^{-1}\comp\fQ(g_1)=g$ (hence also $\fH(g_2)^{-1}\comp\fH(g_1)=\fG(g)$), this proves that \eqref{eq:comm} commutes.
\end{proof}

The following is then a straightforward application.

\begin{cor}\label{cor:stroncases}
The following triangulated categories have strongly unique enhancement:
\begin{itemize}
\item $\Da(\Mod{R})$ (and $\Da(\fMod{R})$), where $R$ is a left hereditary (and left noetherian) ring,
\item $\Da(\Qcoh(C))$ and $\Da(\Coh(C))$, where $C$ is a smooth curve over a field,
\end{itemize}
for $?=b,+,-,\emptyset$.
\end{cor}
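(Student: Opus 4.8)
The plan is to deduce this directly from \autoref{stronguniqher}, since every abelian category on the list is hereditary. First I would appeal to \autoref{ex:her}: part (i) records that $\Mod R$ is hereditary whenever $R$ is left hereditary, and that $\fMod R$ is an abelian hereditary category when $R$ is in addition left noetherian; part (ii) records that $\Qcoh(C)$ and $\Coh(C)$ are hereditary for $C$ a smooth curve over a field. (Smoothness of a curve forces every local ring to be regular of Krull dimension $\le 1$, hence of global dimension $\le 1$, which is the geometric source of heredity; over a field the curve is moreover of finite type, hence noetherian, which is what makes $\Coh(C)$ abelian.)

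With that in hand the argument is a single line: apply \autoref{stronguniqher} with $\ca$ successively equal to $\Mod R$, $\fMod R$, $\Qcoh(C)$ and $\Coh(C)$, obtaining in each case that $\Da(\ca)$ has a strongly unique enhancement for $?=b,+,-,\emptyset$. It is worth recalling that \autoref{stronguniqher} itself rests on the classical fact that, over a hereditary abelian category, every complex splits as the coproduct of the shifts of its cohomologies, so that the subcategory of complexes with zero differential is equivalent to the full derived category; combined with the uniqueness statement \autoref{thm:main2}(i) and the comparison lemmas from \cite{CNS}, this forces the autoequivalence $\fG$ appearing there to be isomorphic to the identity, which is exactly strong uniqueness.

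I do not expect a genuine obstacle here; the corollary really is a formality once \autoref{stronguniqher} is available. The only point that deserves a moment's care is checking that \autoref{stronguniqher} is legitimately applicable to $\fMod R$ and to $\Coh(C)$, i.e.\ that these really are abelian categories and that heredity passes to them correctly (Ext groups computed inside $\fMod R$ agreeing with those in $\Mod R$, and likewise for $\Coh(C)\subseteq\Qcoh(C)$) — and this is precisely why the statement imposes left noetherianity on $R$ and smoothness on $C$, so nothing further needs to be added.
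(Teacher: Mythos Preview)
Your proposal is correct and matches the paper's approach exactly: the paper treats this corollary as a straightforward application of \autoref{stronguniqher}, using \autoref{ex:her} to verify that each of the listed abelian categories is hereditary. No further argument is given or needed.
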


\section{Strong uniqueness for the dual numbers}\label{stronguniq}

By \autoref{DSQp} $\Da$, for $?=sb,b,+,-,\emptyset$, and $\Dmfg$ have strongly unique enhancements if and only if the same is true for $\SQpa$ and $\SQpmfg$. In the following we will always deal with the latter categories. 

We first assume $?=b,+,-,\emptyset$. Since it has a unique enhancement by \autoref{thm:main2}, $\SQpa$ has a strongly unique enhancement if and only if every exact autoequivalence $\fF$ of $\SQpa$ admits a dg lift to an isomorphism $f\in\Hom_{\Hqe}(\cC,\cC)$, where $\cC$ is an arbitrary fixed dg enhancement of $\SQpa$.

We start by proving the following result.

\begin{lem}\label{rest}
By restriction $\fF$ induces an exact autoequivalence of $\SQpbfg$.
\end{lem}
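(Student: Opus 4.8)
The plan is to characterize $\SQpbfg$ intrinsically inside $\SQpa$ by properties that are manifestly preserved by any exact autoequivalence, and then observe that $\fF$ sends this characterized subcategory to itself, hence restricts. The candidate characterization is: an object of $\SQpa$ lies in $\SQpbfg$ if and only if it is a \emph{compact} object of $\SQpa$ (equivalently, using \autoref{DSQp} and \autoref{Sqpachar}/\autoref{Sqpachar}, if and only if it corresponds under the equivalence $\SQp\iso\D$ to an object of $\Dbfg$, which on the level of sequences is cut out by $K^i_\infty=V^i$, $V^i=0$ for $i\gg0$ and $i\ll0$, and $\dim_\kk V^i<\infty$). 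The point is that compactness is a purely categorical notion, defined via commutation of $\Hom$ with the coproducts that exist in the ambient category, so it is automatically preserved by any exact equivalence (an equivalence preserves coproducts and $\Hom$-sets). Therefore, once we know $\SQpbfg$ coincides with the subcategory of compact objects of $\SQpa$, we get for free that $\fF(\SQpbfg)=\SQpbfg$ and that $\fF$ restricts to an exact autoequivalence there.

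First I would pin down which ambient category to use. For $?=\emptyset$ the natural ambient is $\SQp\iso\D=\D(\Mod\kep)$, and the compact objects of $\D(\Mod\kep)$ are exactly $\Dcp=\Perf\kep$; via \autoref{Sqpachar} and \autoref{Sqpachar} these correspond to the $\sq V$ with $K^i_\infty=V^i$, $V^i$ finite-dimensional, and $V^i=0$ outside a finite range—that is, exactly $\SQpbfg$ by \autoref{Sqpachar}. For $?=b,+,-$ the subtlety is that $\SQpa$ need not have all coproducts, so "compact" has to be read as "commuting with the coproducts that exist in $\SQpa$"; I would argue that with this reading the compact objects of $\SQpb$, of $\SQpu$, and of $\SQpm$ are again precisely $\SQpbfg$. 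For $\SQpb$ this is because a bounded complex of $\kep$-modules has bounded finitely generated cohomology iff it is perfect (finite homological dimension is not available, but boundedness of the sequence plus finite-dimensionality is exactly the perfectness condition over $\kep$ after passing to minimal models via \autoref{minimal}/\autoref{KmiSQ}), and one checks directly that an object of $\SQpb$ with some $\dim_\kk V^i=\infty$ fails to be compact by testing against a suitable countable coproduct of objects $\sq S_{a,b}\in\SQpbfg$—concretely, against $\bigoplus_{n}\sq S_{i,i}$ mapping to $\sq V$ componentwise, cf.\ the argument in \autoref{nondec}. The cases $?=+,-$ are similar, using that an object with $V^i=0$ outside a half-line but with infinitely many nonzero $V^i$, or with some infinite-dimensional $V^i$, is non-compact by the same kind of test.

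The alternative, if the compactness route turns out to be awkward for $?=+,-$ (where the relevant coproducts are sparse), is to characterize $\SQpbfg$ by a finiteness condition phrased entirely in $\Hom$-spaces: $\sq V\in\SQpa$ lies in $\SQpbfg$ iff the graded $\kk$-vector space $\bigoplus_{n\in\ZZ}\Hom_{\SQpa}(\sq S,\sh[n]{\sq V})$ is finite-dimensional for every $\sq S$ in some fixed generating family of compact generators of $\SQpbfg$ (for instance the $\sq S_{a,b}$ with $a,b$ finite), together with $\Hom_{\SQpa}(\sq V,\sh[n]{\sq V})=0$ for $|n|\gg0$. Both the generating family and the vanishing/finiteness conditions are preserved by an exact autoequivalence, so $\fF$ preserves the subcategory they carve out. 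Either way the conclusion is the same and the restriction is automatic.

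The main obstacle I expect is the $?=+,-$ cases: there $\SQpa$ has comparatively few coproducts, so "compactness" is a weak condition and I will need to supply a genuine argument—using the explicit structure of objects from \autoref{Sqpachar} and \autoref{Sqpachar} and the behavior of $\Hom$ out of the $\sq S_{n,\infty}$ (and into the $\sq S_{-\infty,b}$) recorded in \autoref{Homindec}—that the only compact objects are the honestly finite ones, ruling out things like $\sq S_{-\infty,b}$ with $b$ finite (which is in $\SQpb$ but not $\SQpbfg$ only after imposing the dimension bound; here it would be $\sq S_{a,b}$ with $\dim$ unbounded that must be excluded). Once that finiteness lemma is in place, the restriction statement for $\fF$ follows formally and the proof is short.
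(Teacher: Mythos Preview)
Your approach has a genuine gap: you have conflated $\SQpbfg$ with $\SQpc$. Over the dual numbers these differ, because $\kep$ has infinite global dimension. Concretely, $\sq{S}_{-\infty,0}\in\SQpbfg$ corresponds under \autoref{DSQp} to the simple module $\kk\in\Dbfg$, which is not perfect and hence not compact in $\D$. On the sequence side $\sq{S}_{-\infty,0}$ has $V^i=\kk$ for all $i\le0$, so it violates your condition ``$V^i=0$ for $i\ll0$''; that condition cuts out $\SQpsb$ or $\SQpc$, not $\SQpb$---the correct condition for $\SQpb$ from \autoref{Sqpachar} is that $\sm^i$ be an \emph{isomorphism} for $i\ll0$. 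Your alternative characterization fails on the same object: $\Hom_{\SQp}(\sq{S}_{-\infty,0},\sh[n]{\sq{S}_{-\infty,0}})\iso\Ext^n_{\kep}(\kk,\kk)\iso\kk$ for every $n\ge0$, so the Ext-vanishing test would wrongly exclude it from $\SQpbfg$.

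The paper uses indecomposability instead. By \autoref{indchar} and \autoref{indec2} the indecomposable objects of $\SQpa$ (for each of $?=b,+,-,\emptyset$) are exactly the $\sq{S}_{a,b}$ with $-\infty\le a\le b<\infty$, all of which lie in $\SQpbfg$; conversely, by \autoref{indchar2} together with the finite-dimensionality bound, every object of $\SQpbfg$ is a \emph{finite} coproduct of such. Thus $\SQpbfg$ is characterized inside $\SQpa$ as the class of finite coproducts of indecomposables, a description preserved by any additive equivalence. Applying this to $\fF$ and to $\fF^{-1}$ gives the lemma at once.
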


\begin{proof}
From \autoref{indchar} and \autoref{indec2} we immediately deduce that the objects of $\SQpbfg$ are precisely the finite coproducts of the indecomposable objects of $\SQpa$. As $\fF$ is an equivalence, this clearly implies that $\fF(\SQpbfg)\subseteq\SQpbfg$, and so we obtain an exact and fully faithful functor $\fF\rest{\SQpbfg}\colon\SQpbfg\to\SQpbfg$. By applying the same argument to $\fF^{-1}$ we conclude that $\fF\rest{\SQpbfg}$ is also essentially surjective.
\end{proof}

Now, by \cite[Theorem 7.1]{CY}, $\fF\rest{\SQpbfg}$ admits a dg lift, which can be clearly extended to an isomorphism $f\in\Hom_{\Hqe}(\cC,\cC)$. Up to replacing $\fF$ with $\fF\comp H^0(f)^{-1}$, we could conclude that $\SQpa$ has a strongly unique enhancement provided the following question has a positive answer.

\begin{qn}\label{mainqn}
Let $\fF$ be an exact autoequivalence of $\SQpa$, for $?=b,+,-,\emptyset$, such that $\fF\rest{\SQpbfg}\iso\id_{\SQpbfg}$. Does this imply $\fF\iso\id_{\SQpa}$?
\end{qn}

\begin{remark}\label{funiso}
In \autoref{mainqn} we can actually assume $\fF\rest{\SQpbfg}=\id_{\SQpbfg}$. In order to see this, we consider a more general setting, since it will be useful several times in the following. Given a functor $\fF\colon\cc\to\cd$ and a collection of isomorphisms $\nat_X\colon\fF(X)\to\fG(X)$ in $\cd$ for every $X\in\cc$, the map on objects defined by $\fG$ extends uniquely to a functor $\fG\colon\cc\to\cd$ such that $\nat\colon\fF\to\fG$ is an isomorphism of functors. Note that $\fG(f)=\nat_Y\comp\fF(f)\comp\nat_X^{-1}$ for every morphism $f\colon X\to Y$ in $\cc$, and for this reason we will say that $\fG$ is obtained from $\fF$ by conjugation with the isomorphisms $\{\nat_X\}_{X\in\cc}$. In particular, given a functor $\fG'\colon\cc'\to\cd$ (for some full subcategory $\cc'$ of $\cc$) and an isomorphism $\nat'\colon\fF\rest{\cc'}\to\fG'$, we can find a functor $\fG\colon\cc\to\cd$ such that $\fG\rest{\cc'}=\fG'$ and an isomorphism $\nat\colon\fF\to\fG$ simply by choosing, for instance, $\nat_X:=\nat'_X$ for $X\in\cc'$ and $\nat_X:=\id_{\fF(X)}$ for $X\in\cc\setminus\cc'$.
\end{remark}

\subsection{Strong uniqueness in the bounded and bounded below cases}

We are now ready to prove part of \autoref{mainthm} (b) by showing that \autoref{mainqn} has a positive answer for $?=b,+$.

We assume (by \autoref{funiso}) $\fF\rest{\SQpbfg}=\id_{\SQpbfg}$. Notice first that, by \autoref{indchar}, \autoref{indec2} and \autoref{indchar2}, the objects of $\SQpa$ are precisely the coproducts, which exist in $\SQpa$, of objects of $\SQpbfg$. Now we choose, for every $\sq{V}\in\SQpa$, such a representation of $\sq{V}$ as a coproduct of objects of $\SQpbfg$, namely morphisms $\inc_\lambda\colon\sq{U}_\lambda\to\sq{V}$ (where $\lambda\in\Lambda$) of $\SQpa$ with $\sq{U}_\lambda\in\SQpbfg$, which satisfy the universal property of coproduct. As $\fF$ (being an equivalence) preserves coproducts and $\fF(\sq{U}_\lambda)=\sq{U}_\lambda$, there exists a unique isomorphism $\nat_{\sq{V}}\colon\fF(\sq{V})\to\sq{V}$ such that $\inc_\lambda=\nat_{\sq{V}}\comp\fF(\inc_\lambda)$ for every $\lambda\in\Lambda$. Observe also that $\nat_{\sq{V}}=\id_{\sq{V}}$ when $\sq{V}\in\SQpbfg$ (because in that case $\fF(\inc_\lambda)=\inc_\lambda$). Therefore, up to conjugation with the $\nat_{\sq{V}}$, we can assume $\fF(\inc_\lambda)=\inc_\lambda$ for every $\lambda\in\Lambda$ (and for every $\sq{V}\in\SQpa$) and that the condition $\fF\rest{\SQpbfg}=\id_{\SQpbfg}$ still holds. We claim that this is enough to conclude, meaning that then $\fF(f)=f$ for every morphism $f$ of $\SQpa$. If $f\colon\sq{U}\to\sq{V}$ with $\sq{U}\in\SQpbfg$, the compactness\footnote{It is not difficult to show directly that every object of $\SQpbfg$ is compact in $\SQpa$, for $?=b,+$. Alternatively, one can invoke \cite[Corollary 6.17]{R} when $?=b$, and then observe that the case $?=+$ follows from an easy truncation argument.} of $\sq{U}$ implies that there exist morphisms $f_i\colon\sq{U}\to\sq{U}_{\lambda_i}$ (for $i=1,\dots,n$ and $\lambda_i\in\Lambda$) of $\SQpbfg$ such that $f=\sum_{i=1}^n\inc_{\lambda_i}\comp f_i$. As $\fF$ is an additive functor, we deduce that $\fF(f)=f$ in this case. In general, when $f\colon\sq{V}\to\sq{W}$ is an arbitrary morphism of $\SQpa$, by what we have already proved we obtain 
\[
\fF(f)\comp\inc_\lambda=\fF(f)\comp\fF(\inc_\lambda)=\fF(f\comp\inc_\lambda)=f\comp\inc_\lambda
\]
for every $\lambda\in\Lambda$, whence $\fF(f)=f$.

\subsection{The remaining cases of \autoref{mainthm} (b)}

The proof of strong uniqueness for $\SQpsb$ and $\SQpmfg$ uses arguments similar to the one above for $\SQpb$ and $\SQpu$. Thus we just sketch them, emphasizing what needs to be changed.

As for $\SQpsb$, uniqueness of enhancement holds thanks to \cite{CNS3}. Moreover, one can easily show that every exact autoequivalence of $\SQpsb$ restricts to an exact autoequivalence of $\SQpc$. Using the fact that the objects of $\SQpsb$ are precisely the coproducts, which exist in $\SQpsb$, of objects of $\SQpc$, the rest of the proof works as before.

Passing to $\SQpmfg$, again by \autoref{indchar2}, every object $\sq{V}$ of $\SQpmfg$ is a coproduct of objects of $\SQpbfg$. Since $\dim_\kk(V^i)<\infty$ for every $i\in\ZZ$, it should be clear that every such coproduct is also a product. Then one can rather directly conclude using the universal properties of coproduct and product.

\section{Speculations on the unbounded and bounded above cases}\label{open}

We want to conclude the paper with some additional speculations. The aim would be to deal with the remaining two cases: the unbounded and the bounded above cases. Unfortunately at the moment we are unable to decide whether the enhancements for these two categories are strongly unique or not, while they are certainly unique in view of \autoref{thm:main2}. In \autoref{results} we exhibit the partial results we have in this direction, while in \autoref{spec} we outline some open problems. 

\subsection{Partial results }\label{results}

Let us now summarize what we can prove so far. From now on, we assume $?=-,\emptyset$. We set $\Sqpadc:=\Sqpa\cap\Sqd[\ZZ,\ZZ]$ and $\SQpadc:=\SQpa\cap\SQd[\ZZ,\ZZ]$.

\begin{lem}\label{idSQpadc}
We can assume $\fF\rest{\SQpadc}=\id_{\SQpadc}$.
\end{lem}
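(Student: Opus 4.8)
The plan is to transcribe, nearly verbatim, the argument used in \autoref{stronguniq} for the cases $?=b,+$, with $\SQpbfg$ replaced by $\SQpc$ and $\SQpa$ replaced by $\SQpadc$; recall that from the reductions already made (cf.\ \autoref{funiso}) we may start from an exact autoequivalence $\fF$ of $\SQpa$ with $\fF\rest{\SQpbfg}=\id_{\SQpbfg}$. First I would record the relevant combinatorics: by \autoref{indec2} every $\sq{S}_{a,b}$ with $a,b\in\ZZ$ lies in $\SQpc$, which, being triangulated, is closed under finite coproducts; on the other hand, by definition of $\SQd[\ZZ,\ZZ]$ and because $\SQpa$ is a full subcategory of $\SQ$ closed under the coproducts of such families, every object of $\SQpadc$ is a coproduct \emph{in} $\SQpa$ of (indecomposable) objects of $\SQpc$. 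Since $\SQpc\subseteq\SQpbfg$, the standing assumption already gives $\fF\rest{\SQpc}=\id_{\SQpc}$.

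Next, for every $\sq{V}\in\SQpadc$ I would fix such a coproduct representation $\inc_\lambda\colon\sq{U}_\lambda\to\sq{V}$ ($\lambda\in\Lambda$, $\sq{U}_\lambda\in\SQpc$). Since $\fF$ is an equivalence it preserves this coproduct and fixes each $\sq{U}_\lambda$, so there is a unique isomorphism $\nat_{\sq{V}}\colon\fF(\sq{V})\to\sq{V}$ with $\inc_\lambda=\nat_{\sq{V}}\comp\fF(\inc_\lambda)$ for all $\lambda$; and $\nat_{\sq{V}}=\id_{\sq{V}}$ whenever $\sq{V}\in\SQpbfg$, because then the $\inc_\lambda$ are morphisms of $\SQpbfg$, hence fixed by $\fF$. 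Extending $\nat$ by identities outside $\SQpadc$ and conjugating $\fF$ by the resulting family as in \autoref{funiso}, I may assume $\fF(\inc_\lambda)=\inc_\lambda$ for all the chosen structure maps, while $\fF\rest{\SQpbfg}=\id_{\SQpbfg}$ is still in force.

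It then remains to show that $\fF(f)=f$ for every morphism $f\colon\sq{V}\to\sq{W}$ of $\SQpadc$. As in \autoref{stronguniq}, joint epimorphy of the $\inc_\lambda$ for $\sq{V}$ reduces this to the case $f\colon\sq{U}\to\sq{W}$ with $\sq{U}\in\SQpc$; here the key input is that such a $\sq{U}$, being a perfect complex, is a compact object of $\SQp$ (equivalent to $\D(\Mod{\kep})$ by \autoref{DSQp}), hence compact in $\SQpadc$, so $f$ factors through a finite subcoproduct of the chosen decomposition of $\sq{W}$, which lies in $\SQpc\subseteq\SQpbfg$. Writing $f$ through that finite subcoproduct and using additivity of $\fF$ together with $\fF\rest{\SQpbfg}=\id$ and $\fF(\inc_\mu)=\inc_\mu$ gives $\fF(f)=f$ in this case, and then $\fF(f)\comp\inc_\lambda=\fF(f\comp\inc_\lambda)=f\comp\inc_\lambda$ for all $\lambda$ forces $\fF(f)=f$ in general; renaming the conjugated functor $\fF$ gives the claim.

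The only point I would verify with real care is the compactness in $\SQpadc$ of the objects of $\SQpc$: for $?=\emptyset$ it is immediate from compactness in $\D(\Mod{\kep})$, while for $?=-$ one must also note that the coproducts existing in $\SQpadc$ are precisely those with a uniform upper bound, that these exist in $\SQpm$ (corresponding to $\Dm(\Mod{\kep})$) and are computed there as in $\D(\Mod{\kep})$, so that compactness descends and the inclusions $\SQpadc\subseteq\SQpa\subseteq\SQp$ preserve all the coproducts at issue. Beyond this bookkeeping the proof introduces no idea not already present in \autoref{stronguniq}, which is exactly why it only yields this partial statement rather than a full answer to \autoref{mainqn}.
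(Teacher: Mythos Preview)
Your proposal is correct and is exactly the approach the paper intends: its proof of this lemma is the single sentence ``This is similar to the proof in \autoref{stronguniq}, in view of \autoref{funiso},'' and you have faithfully unpacked that sentence, with $\SQpc$ playing the role of $\SQpbfg$ and $\SQpadc$ that of $\SQpa$. The only extra content you add is the explicit verification of compactness of $\SQpc$-objects for the coproducts occurring in $\SQpadc$ when $?=-$, which is a reasonable point to spell out and is handled correctly.
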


\begin{proof}
This is similar to the proof in \autoref{stronguniq}, in view of \autoref{funiso}.
\end{proof}

For every $\sq{V}\in\Sq$, setting $\cotr{\sq{V}}:=\bigoplus_{n\le0}V^{\ge n}$, it is easy to see that there is an exact sequence $0\to\cotr{\sq{V}}\mor{\nata_{\sq{V}}}\cotr{\sq{V}}\mor{\natb_{\sq{V}}}\sq{V}\to0$ in $\Sq$, where each component $V^{\ge n}\to\cotr{\sq{V}}$ of $\nata_{\sq{V}}$ is the difference between the natural morphisms $V^{\ge n}\to\cotr{\sq{V}}$ and $V^{\ge n}\to V^{\ge n-1}\to\cotr{\sq{V}}$, while each component $V^{\ge n}\to\sq{V}$ of $\natb_{\sq{V}}$ is just the natural morphism, which we will denote by $\natb_{\sq{V},n}$. By \autoref{triangles} there exists unique $\natc_{\sq{V}}\in\Homep_\Sq(\sq{V},\sh{\cotr{\sq{V}}})$ such that the triangle
\begin{equation}\label{distrunc}
\cotr{\sq{V}}\mor{\nata_{\sq{V}}}\cotr{\sq{V}}\mor{\natb_{\sq{V}}}\sq{V}\mor{\natc_{\sq{V}}}\sh{\cotr{\sq{V}}}
\end{equation}
is distinguished in $\SQ$. For later use we also point out that for every $n\in\ZZ$ there is a natural exact sequence $0\to V^{\ge n}\mor{\natb_{\sq{V},n}}\sq{V}\mor{\natd_{\sq{V},n}}V^{<n}\to0$ in $\Sq$. Hence, similarly as above, there exists unique $\nate_{\sq{V},n}\in\Homep_\Sq(V^{<n},\sh{V^{\ge n}})$ such that the triangle
\begin{equation}\label{distruncn}
V^{\ge n}\mor{\natb_{\sq{V},n}}\sq{V}\mor{\natd_{\sq{V},n}}V^{<n}\mor{\nate_{\sq{V},n}}\sh{V^{\ge n}}
\end{equation}
is distinguished in $\SQ$.

\begin{remark}\label{trdc}
If $\sq{V}\in\Sqpa$, then certainly $V^{\ge n}\in\Sqpu\cap\Sqpa$, so that $V^{\ge n}\in\Sqpad$ by \autoref{indchar} and \autoref{indchar2}. As clearly $V^{\ge n}\in\Sqd[\ZZ_{\ge n},\ZZ]$, we see that $\cotr{\sq{V}}\in\Sqpadc$.
\end{remark}

\begin{prop}\label{mainhyp}
We can assume that $\fF$ acts as the identity on the morphisms with source in $\SQpadc$ and on the morphisms of the triangles \eqref{distrunc} (for every $\sq{V}\in\SQpa$).
\end{prop}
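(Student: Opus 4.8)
The plan is to keep enlarging the full subcategory, resp.\ the class of morphisms, on which $\fF$ is the \emph{literal} identity, modifying $\fF$ only by conjugation with isomorphisms in the sense of \autoref{funiso}, so that its isomorphism class never changes. We start from \autoref{idSQpadc}, i.e.\ $\fF\rest{\SQpadc}=\id_{\SQpadc}$, and we arrange that every subsequent conjugation is by a family $\{\nat_{\sq{V}}\}_{\sq{V}\in\SQpa}$ with $\nat_{\sq{V}}=\id_{\sq{V}}$ for all $\sq{V}\in\SQpadc$; this keeps $\fF\rest{\SQpadc}=\id$ throughout.

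The first step is to fix the truncation triangles \eqref{distrunc}. By \autoref{trdc} we have $\cotr{\sq{V}}\in\SQpadc$ for every $\sq{V}\in\SQpa$, hence $\fF(\cotr{\sq{V}})=\cotr{\sq{V}}$, $\fF(\sh{\cotr{\sq{V}}})=\sh{\cotr{\sq{V}}}$ and $\fF(\nata_{\sq{V}})=\nata_{\sq{V}}$. Applying the exact functor $\fF$ to \eqref{distrunc} therefore produces a second distinguished triangle on the morphism $\nata_{\sq{V}}$, and a standard use of the triangulated axioms provides an isomorphism $\nat_{\sq{V}}\colon\fF(\sq{V})\to\sq{V}$ with $\nat_{\sq{V}}\comp\fF(\natb_{\sq{V}})=\natb_{\sq{V}}$ and $\fF(\natc_{\sq{V}})=\natc_{\sq{V}}\comp\nat_{\sq{V}}$; for $\sq{V}\in\SQpadc$ one takes $\nat_{\sq{V}}=\id_{\sq{V}}$, which is consistent since $\natb_{\sq{V}}$ and $\natc_{\sq{V}}$ are then morphisms of $\SQpadc$. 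Conjugating $\fF$ by the $\nat_{\sq{V}}$ (chosen compatibly with the shift, which is routine) and renaming, we may assume $\fF(\nata_{\sq{V}})=\nata_{\sq{V}}$, $\fF(\natb_{\sq{V}})=\natb_{\sq{V}}$ and $\fF(\natc_{\sq{V}})=\natc_{\sq{V}}$ for every $\sq{V}\in\SQpa$, while still $\fF\rest{\SQpadc}=\id$.

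It remains to treat a morphism $g\colon\sq{U}\to\sq{V}$ with $\sq{U}\in\SQpadc$ and $\sq{V}\in\SQpa$ arbitrary. By \autoref{indchar}, \autoref{indchar2} and \autoref{indec2}, $\sq{U}$ is a coproduct $\bigoplus_\lambda\sq{S}_{a_\lambda,b_\lambda}$ with $a_\lambda,b_\lambda\in\ZZ$ (the coproduct being computed in $\SQpa$), the inclusions $\inc_\lambda$ are morphisms of $\SQpadc$, hence fixed by $\fF$, and $\fF$ preserves coproducts; so it suffices to prove $\fF(\phi)=\phi$ for $\phi\colon\sq{S}_{a,b}\to\sq{V}$ with $a,b\in\ZZ$. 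Here \eqref{distrunc} is recognised as the telescope triangle exhibiting $\sq{V}$ as the homotopy colimit of the system $\{V^{\ge n}\}_{n\le 0}$, with canonical maps $\natb_{\sq{V},n}\colon V^{\ge n}\to\sq{V}$; by \autoref{trdc} each $V^{\ge n}$ lies in $\SQpadc$, and $\natb_{\sq{V},n}$ is the composite of the (now fixed) $\natb_{\sq{V}}$ with the coproduct inclusion $V^{\ge n}\mono\cotr{\sq{V}}$, whence $\fF(\natb_{\sq{V},n})=\natb_{\sq{V},n}$. Since $\sq{S}_{a,b}$ with $a,b\in\ZZ$ is a compact object of $\SQp$ (\autoref{indec2}), and hence of $\SQpa$, the map $\phi$ factors as $\phi=\natb_{\sq{V},n}\comp\phi'$ with $\phi'\colon\sq{S}_{a,b}\to V^{\ge n}$ a morphism of $\SQpadc$; therefore $\fF(\phi)=\fF(\natb_{\sq{V},n})\comp\fF(\phi')=\natb_{\sq{V},n}\comp\phi'=\phi$, and then $\fF(g)\comp\inc_\lambda=\fF(g\comp\inc_\lambda)=g\comp\inc_\lambda$ for all $\lambda$ forces $\fF(g)=g$.

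I do not expect a substantive obstacle inside this proposition: the only real care needed is that the conjugation used to fix \eqref{distrunc} leave $\SQpadc$ untouched (this is why we insist on $\nat_{\sq{V}}=\id$ there), and the rest is the observation that compactness of the indecomposables $\sq{S}_{a,b}$ with $a,b\in\ZZ$ propagates the identity from $\SQpadc$ and the telescope triangles to every morphism out of $\SQpadc$. The genuine difficulty, which this proposition deliberately does not resolve, is the behaviour of $\fF$ on morphisms whose source is \emph{not} in $\SQpadc$, equivalently on the non completely decomposable objects of \autoref{nondec}; it is precisely this that keeps \autoref{mainqn} open for $?=-,\emptyset$.
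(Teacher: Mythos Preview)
Your argument is correct and follows essentially the same route as the paper: first use exactness of $\fF$ on the triangle \eqref{distrunc} (whose first two vertices lie in $\SQpadc$ by \autoref{trdc}) to produce the conjugating isomorphisms $\nat_{\sq{V}}$ with $\nat_{\sq{V}}=\id$ on $\SQpadc$, and then factor any morphism $\sq{S}_{m,n}\to\sq{V}$ through $\natb_{\sq{V}}$ via some $V^{\ge n}\hookrightarrow\cotr{\sq{V}}$. The only cosmetic differences are that the paper phrases the factorization step directly (``every morphism $\sq{S}_{m,n}\to\sq{V}$ factors through $V^{\ge m}\to\sq{V}$'') rather than via compactness and the telescope, and that your parenthetical about choosing the $\nat_{\sq{V}}$ ``compatibly with the shift'' is unnecessary, since conjugation by any family of isomorphisms preserves exactness.
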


\begin{proof}
By \autoref{idSQpadc} we can assume $\fF\rest{\SQpadc}=\id_{\SQpadc}$. Thus, due to \autoref{trdc}, $\fF(\nata_{\sq{V}})=\nata_{\sq{V}}$ for every $\sq{V}\in\SQpa$. Since $\fF$ is exact, there exists an isomorphism $\nat_{\sq{V}}\colon\fF(\sq{V})\to\sq{V}$ in $\SQpa$ such that the diagram
\[
\xymatrix{
 & & \fF(\sq{V}) \ar[dd]^{\nat_{\sq{V}}} \ar[drr]^{\fF(\natc_{\sq{V}})} \\
\cotr{\sq{V}} \ar[rru]^{\fF(\natb_{\sq{V}})} \ar[rrd]_{\natb_{\sq{V}}} & & & & \sh{\cotr{\sq{V}}} \\
 & & \sq{V} \ar[rru]_{\natc_{\sq{V}}} 
}
\]
commutes for every $\sq{V}\in\SQpa$. We can clearly choose $\nat_{\sq{V}}=\id_{\sq{V}}$ when $\sq{V}\in\SQpadc$. Therefore, up to conjugation with the $\nat_{\sq{V}}$, we can also assume $\fF(\natb_{\sq{V}})=\natb_{\sq{V}}$ and $\fF(\natc_{\sq{V}})=\natc_{\sq{V}}$ for every $\sq{V}\in\SQpa$. Finally, for every morphism $f\colon\sq{W}\to\sq{V}$ of $\SQpa$ with $\sq{W}$ in $\SQpadc$ there exists $g\in\Hom_{\SQpadc}(\sq{W},\cotr{\sq{V}})$ such that $f=\natb_{\sq{V}}\comp g$. Indeed, this follows from the fact that $\sq{W}$ is a coproduct of objects of the form $\sq{S}_{m,n}$ (with $m,n\in\ZZ$ and $m\le n$), and obviously every morphism $\sq{S}_{m,n}\to\sq{V}$ factors through the natural morphism $V^{\ge m}\to\sq{V}$. Hence we conclude that $\fF(f)=f$, as we already know that $\fF(\natb_{\sq{V}})=\natb_{\sq{V}}$ and $\fF(g)=g$.
\end{proof}

From now on we assume that $\fF$ is an exact autoequivalence of $\SQpa$ as in \autoref{mainhyp}. Observe that, in particular, $\fF$ is the identity on objects.

Recall that a morphism $f\colon X\to Y$ in a compactly generated triangulated category (with coproducts) $\ct$ is \emph{phantom} if $f\comp g=0$ for every morphism $g\colon C\to X$ with $C$ compact. It is immediate from the definition that phantom morphisms form an ideal in $\ct$.

For every $\sq{V},\sq{W}\in\Sqp$ we denote by $\Homph_\Sq(\sq{V},\sq{W})$ the subspace of $\Hom_{\SQ}(\sq{V},\sq{W})$ consisting of phantom morphisms.

\begin{remark}\label{phep}
A morphism $f\colon\sq{V}\to\sq{W}$ of $\SQp$ is phantom if and only if $f\comp\natb_{\sq{V},n}=0$ for every $n\in\ZZ$. Indeed, the condition is necessary because $V^{\ge n}\in\Sqpdc$ by \autoref{trdc}. On the other hand, the condition is sufficient because every morphism $\sq{U}\to\sq{V}$ of $\SQ$ with $\sq{U}\in\Sqpc$ factors through $\natb_{\sq{V},n}\colon V^{\ge n}\to\sq{V}$, where $n$ is such that $U^i=0$ for $i<n$. It follows that $f$ is phantom if and only if $f\comp\natb_{\sq{V}}=0$. Thanks to the distinguished triangle \eqref{distrunc} this last condition is satisfied if and only if $f$ factors through $\natc_{\sq{V}}$ (in particular, $\natc_{\sq{V}}$ is phantom). As $\natc_{\sq{V}}\in\Homep_\Sq(\sq{V},\sh{\cotr{\sq{V}}})$, this implies
\[
\Homph_\Sq(\sq{V},\sq{W})\subseteq\Homep_\Sq(\sq{V},\sq{W})
\]
for every $\sq{V},\sq{W}\in\Sqp$. In other words, every phantom morphism of $\SQp$ is of type $\ep$.
\end{remark}

\begin{remark}\label{extraiso}
The assumptions of \autoref{mainhyp} are still satisfied if we change $\fF$ by conjugation with (iso)morphisms of $\SQpa$ of the form $\id_{\sq{V}}+p_{\sq{V}}$ with $p_{\sq{V}}\colon\sq{V}\to\sq{V}$ phantom for every $\sq{V}\in\Sqpa$. This is simply due to the fact that every phantom morphism with source in $\SQpadc$ is trivial and every morphism in a triangle \eqref{distrunc} either has source in $\SQpadc$ (namely, $\nata_{\sq{V}}$ and $\natb_{\sq{V}}$) or is of type $\ep$ (namely, $\natc_{\sq{V}}$).
\end{remark}

\begin{prop}\label{diffph}
For every morphism $f\colon\sq{V}\to\sq{W}$ of $\SQpa$ we have $\fF(f)-f\in\Homph_\Sq(\sq{V},\sq{W})$.
\end{prop}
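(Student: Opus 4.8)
The plan is to reduce everything to the characterization of phantom morphisms in \autoref{phep}, which says that a morphism $g\colon\sq{V}\to\sq{W}$ of $\SQp$ is phantom if and only if $g\comp\natb_{\sq{V}}=0$, where $\natb_{\sq{V}}\colon\cotr{\sq{V}}\to\sq{V}$ is the morphism from the triangle \eqref{distrunc}. So it suffices to show that $(\fF(f)-f)\comp\natb_{\sq{V}}=0$ for an arbitrary morphism $f\colon\sq{V}\to\sq{W}$ of $\SQpa$.

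The key observation is that the source of $\natb_{\sq{V}}$ lies in $\SQpadc$: indeed $\cotr{\sq{V}}\in\Sqpadc$ by \autoref{trdc}, and $\Sqpadc$ and $\SQpadc$ have the same objects. Hence both $\natb_{\sq{V}}$ and the composite $f\comp\natb_{\sq{V}}$ are morphisms of $\SQpa$ with source in $\SQpadc$, so the standing assumptions on $\fF$ (from \autoref{mainhyp}) give $\fF(\natb_{\sq{V}})=\natb_{\sq{V}}$ and $\fF(f\comp\natb_{\sq{V}})=f\comp\natb_{\sq{V}}$. Functoriality of $\fF$ then yields
\[
\fF(f)\comp\natb_{\sq{V}}=\fF(f)\comp\fF(\natb_{\sq{V}})=\fF(f\comp\natb_{\sq{V}})=f\comp\natb_{\sq{V}},
\]
so that $(\fF(f)-f)\comp\natb_{\sq{V}}=0$, and by \autoref{phep} the morphism $\fF(f)-f$ is phantom, as claimed. (The difference $\fF(f)-f$ makes sense and lies in $\Hom_{\SQ}(\sq{V},\sq{W})$ because $\fF$ is the identity on objects and $\SQpa$ is a full subcategory of $\SQ$.)

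I do not expect a genuine obstacle in this step: the real content has been front-loaded into \autoref{mainhyp}, which normalizes $\fF$ to fix the truncation triangles \eqref{distrunc} and all morphisms out of $\SQpadc$, and into \autoref{phep}, which converts the compact-object definition of ``phantom'' into the single equation $g\comp\natb_{\sq{V}}=0$. The only care needed is the routine bookkeeping that $\natb_{\sq{V}}$ and $f\comp\natb_{\sq{V}}$ are indeed morphisms of $\SQpa$, which follows since $\cotr{\sq{V}}\in\SQpa$.
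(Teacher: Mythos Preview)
Your argument is correct and matches the paper's proof essentially line for line: both use \autoref{trdc} to place $\cotr{\sq{V}}$ in $\SQpadc$, invoke \autoref{mainhyp} to get $\fF(\natb_{\sq{V}})=\natb_{\sq{V}}$ and $\fF(f\comp\natb_{\sq{V}})=f\comp\natb_{\sq{V}}$, chain these via functoriality, and conclude by \autoref{phep}.
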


\begin{proof}
By assumption, and recalling that $\cotr{\sq{V}}\in\Sqpadc$ by \autoref{trdc}, we have
\[
  \fF(f)\comp\natb_{\sq{V}}=\fF(f)\comp\fF(\natb_{\sq{V}})=\fF(f\comp\natb_{\sq{V}})=f\comp\natb_{\sq{V}}.
\]
Thus we conclude by \autoref{phep}.
\end{proof}

\begin{cor}\label{injtar}
If $f\colon\sq{V}\to\sq{W}$ is a morphism of $\SQpa$ with $\sq{W}$ injective in $\Sq$, then $\fF(f)=f$.
\end{cor}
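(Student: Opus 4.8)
The plan is to combine \autoref{diffph} with the structural results on phantom and type-$\ep$ morphisms. Since $\fF$ acts as the identity on objects (as noted after \autoref{mainhyp}), the expression $\fF(f)-f$ is a well-defined element of $\Hom_{\SQpa}(\sq{V},\sq{W})\subseteq\Hom_{\SQ}(\sq{V},\sq{W})$, and by \autoref{diffph} it is phantom. So it suffices to prove that $\Homph_\Sq(\sq{V},\sq{W})=0$ whenever $\sq{W}$ is injective in $\Sq$. By \autoref{phep} we have $\Homph_\Sq(\sq{V},\sq{W})\subseteq\Homep_\Sq(\sq{V},\sq{W})$, so it is in fact enough to establish the stronger vanishing $\Homep_\Sq(\sq{V},\sq{W})=0$.

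For the latter I would appeal to \autoref{Ext1}, which provides an injection $\Homep_\Sq(\sq{V},\sq{W})\hookrightarrow\Ext^1_\Sq(\sq{V},\sh[-1]{\sq{W}})$. The only point that needs to be checked is that $\sh[-1]{\sq{W}}$ is still injective: by \autoref{injchar}, injectivity of $\sq{W}$ says that each $\sm[\sq{W}]^i$ is surjective, and since the differentials of $\sh[-1]{\sq{W}}$ are (up to sign) the $\sm[\sq{W}]^{i-1}$, they remain surjective, so \autoref{injchar} applies once more. Hence $\Ext^1_\Sq(\sq{V},\sh[-1]{\sq{W}})=0$, and the injection forces $\Homep_\Sq(\sq{V},\sq{W})=0$.

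Combining, $\fF(f)-f\in\Homph_\Sq(\sq{V},\sq{W})\subseteq\Homep_\Sq(\sq{V},\sq{W})=0$, which gives $\fF(f)=f$. I do not expect a genuine obstacle here: the statement is essentially a formal consequence of \autoref{diffph}, \autoref{phep} and \autoref{Ext1}. The only things to be mildly careful about are that $\sq{V}$ and $\sq{W}$ lie in $\Sqp$ (so that the phantom/type-$\ep$ machinery applies -- this holds because they are objects of $\SQpa$), and the small verification that shifting an injective object of $\Sq$ produces an injective object.
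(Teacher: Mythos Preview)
Your proof is correct and follows essentially the same approach as the paper: both combine \autoref{diffph}, \autoref{phep}, and \autoref{Ext1} to show $\fF(f)-f\in\Homph_\Sq(\sq{V},\sq{W})\subseteq\Homep_\Sq(\sq{V},\sq{W})\hookrightarrow\Ext^1_\Sq(\sq{V},\sh[-1]{\sq{W}})=0$. You spell out explicitly (via \autoref{injchar}) why $\sh[-1]{\sq{W}}$ is injective, which the paper leaves implicit.
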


\begin{proof}
By \autoref{phep} and \autoref{Ext1} we have
\[
\Homph_\Sq(\sq{V},\sq{W})\subseteq\Homep_\Sq(\sq{V},\sq{W})\iso\Ext^1_\Sq(\sq{V},\sh[-1]{\sq{W}})=0
\]
(since $\sh[-1]{\sq{W}}$ is injective). The conclusion then follows from \autoref{diffph}.
\end{proof}

\begin{lem}\label{idph}
If $f$ is a phantom morphism of $\SQpa$, then $\fF(f)=f$.
\end{lem}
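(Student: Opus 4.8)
The plan is to derive this immediately from \autoref{phep} and from the two properties of $\fF$ granted by \autoref{mainhyp}. Recall that, by \autoref{phep}, a phantom morphism $f\colon\sq{V}\to\sq{W}$ of $\SQpa$ satisfies $f\comp\natb_{\sq{V}}=0$, and therefore — via the distinguished triangle \eqref{distrunc} — factors as $f=g\comp\natc_{\sq{V}}$ for some morphism $g\colon\sh{\cotr{\sq{V}}}\to\sq{W}$ (which automatically lies in $\SQpa$, since $\SQpa$ is a full subcategory containing $\sq{W}$, $\sq{V}$ and $\sh{\cotr{\sq{V}}}$).

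First I would check that $\sh{\cotr{\sq{V}}}$ belongs to $\SQpadc$. This holds because $\cotr{\sq{V}}\in\Sqpadc$ by \autoref{trdc}, and $\SQpadc=\SQpa\cap\SQd[\ZZ,\ZZ]$ is stable under the shift functor: indeed $\SQpa$ is a triangulated subcategory, while the shift sends each $\sq{S}_{a,b}$ with $a,b\in\ZZ$ to $\sq{S}_{a-1,b-1}$ (see \autoref{isosign}) and commutes with coproducts. Granting this, $g$ is a morphism with source in $\SQpadc$, so $\fF(g)=g$ by the first assertion of \autoref{mainhyp}; moreover $\fF(\natc_{\sq{V}})=\natc_{\sq{V}}$ by the second assertion of \autoref{mainhyp}, since $\natc_{\sq{V}}$ is one of the morphisms of the triangle \eqref{distrunc}. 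Composing, $\fF(f)=\fF(g)\comp\fF(\natc_{\sq{V}})=g\comp\natc_{\sq{V}}=f$, which is the claim.

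I do not expect any genuine obstacle here: all the substance has already been isolated in \autoref{phep} (the factorization of phantoms through the maps $\natc_{\sq{V}}$) and in \autoref{mainhyp} (that $\fF$ fixes those maps and fixes every morphism out of $\SQpadc$), so the proof is essentially a two-line computation. The only point requiring a moment's care is the stability of $\SQpadc$ under shift, which is immediate; note also that the factorization $f=g\comp\natc_{\sq{V}}$ need not be unique, but this is harmless because $\fF$ fixes $g$ and $\natc_{\sq{V}}$ separately.
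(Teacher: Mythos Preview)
Your proof is correct and follows exactly the same approach as the paper: factor the phantom morphism through $\natc_{\sq{V}}$ via \autoref{phep}, then use \autoref{mainhyp} to conclude that $\fF$ fixes both $g$ (source in $\SQpadc$) and $\natc_{\sq{V}}$. The only difference is that you spell out the stability of $\SQpadc$ under shift, which the paper leaves implicit.
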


\begin{proof}
If $f\in\Homph_{\Sqpa}(\sq{V},\sq{W})$, then by \autoref{phep} there exists $g\in\Hom_{\SQpa}(\sh{\cotr{\sq{V}}},\sq{W})$ such that $f=g\comp\natc_{\sq{V}}$. By assumption $\fF(g)=g$ (recall that $\sh{\cotr{\sq{V}}}\in\Sqpadc$ by \autoref{trdc}) and $\fF(\natc_{\sq{V}})=\natc_{\sq{V}}$, whence $\fF(f)=f$.
\end{proof}

\begin{lem}\label{idkdc}
If $f\colon\sq{S}_{-\infty,n}\to\sq{V}$ is a morphism of $\SQpa$ with $\sq{V}\in\SQpadc$ and $n\in\ZZ$, then $\fF(f)=f$.
\end{lem}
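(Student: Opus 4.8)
By \autoref{diffph} the difference $\fF(f)-f$ is phantom, hence of type $\ep$ by \autoref{phep}; so the whole content of the lemma is that this phantom morphism vanishes. The plan is to realise $\sq{S}_{-\infty,n}$ as a homotopy colimit of compact objects and play this against the normalisations already imposed on $\fF$.

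First I would apply \autoref{triangles} to the short exact sequence
\[
0\to\bigoplus_{m\le 0}\sq{S}_{m,n}\mor{1-s}\bigoplus_{m\le 0}\sq{S}_{m,n}\to\sq{S}_{-\infty,n}\to 0
\]
of $\Sq$, where $s$ is induced by the monomorphisms $\sq{S}_{m,n}\mono\sq{S}_{m-1,n}$, to obtain a distinguished triangle
\[
\bigoplus_{m}\sq{S}_{m,n}\mor{1-s}\bigoplus_{m}\sq{S}_{m,n}\mor{\beta}\sq{S}_{-\infty,n}\mor{\eta}\sh{\bigoplus_{m}\sq{S}_{m,n}}
\]
in $\SQ$, with $\beta$ of type $1$ and $\eta$ of type $\ep$. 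Both $\bigoplus_{m}\sq{S}_{m,n}$ and its shift lie in $\SQpadc$ (coproducts of objects $\sq{S}_{a,b}$ with $a,b\in\ZZ$), so $\fF(\beta)=\beta$ and $\fF(f\comp\beta)=f\comp\beta$ by \autoref{mainhyp}. Moreover each composite $\sq{S}_{m,n}\mono\sq{S}_{-\infty,n}$ factors through $\beta$ and $\eta\comp\beta=0$, so $\eta$ is phantom (by \autoref{phep}) and therefore $\fF(\eta)=\eta$ by \autoref{idph}. Hence $(\fF(f)-f)\comp\beta=\fF(f)\comp\beta-f\comp\beta=0$, so by the triangle $\fF(f)-f$ factors through $\eta$: we may write $\fF(f)-f=g\comp\eta$ for some $g\colon\sh{\bigoplus_m\sq{S}_{m,n}}\to\sq V$, and $\fF(g)=g$ as well since its source lies in $\SQpadc$.

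It remains to force $g\comp\eta=0$, and this is the delicate point, since phantom morphisms $\sq{S}_{-\infty,n}\to\sq V$ with $\sq V\in\SQpadc$ need not all vanish. Here I would invoke \autoref{extraiso}: one may replace $\fF$ by its conjugate under automorphisms $\id_{\sq W}+p_{\sq W}$ with $p_{\sq W}$ phantom. Since every phantom morphism with source in $\SQpadc$ is zero (by the universal property of coproducts, the summands $\sq{S}_{a,b}$ being compact), we may and do take $p_{\sq W}=0$ for $\sq W\in\SQpadc$ (and for injective $\sq W$); such a conjugation then changes $\fF(f)$, for $f\colon\sq{S}_{-\infty,n}\to\sq V$ with $\sq V\in\SQpadc$, exactly into $\fF(f)-f\comp p_{\sq{S}_{-\infty,n}}$. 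Thus the task reduces to choosing the phantom endomorphisms $p_{\sq{S}_{-\infty,n}}$ (uniformly over all $n$, and compatibly with the remaining normalisations, which \autoref{extraiso} permits) so that the derivation $f\mapsto\fF(f)-f$, restricted to morphisms out of $\sq{S}_{-\infty,n}$ with target in $\SQpadc$, becomes inner — i.e.\ equal to precomposition by a fixed phantom $p_{\sq{S}_{-\infty,n}}$. I expect this bookkeeping to be the main obstacle; it should be carried out using the explicit parametrisation of phantom morphisms out of $\sq{S}_{-\infty,n}$ coming from the telescope triangle above together with \autoref{phep}, exploiting that $g$ in $\fF(f)-f=g\comp\eta$ can be chosen additively in $f$.
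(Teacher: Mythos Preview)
Your telescope argument correctly yields a factorisation $\fF(f)-f=g\comp\eta$ with $g$ fixed by $\fF$, but the proof then stalls: showing $g\comp\eta=0$ is not bookkeeping, and your proposed fix via \autoref{extraiso} does not close the gap. The lemma is asserted for \emph{every} $\fF$ satisfying \autoref{mainhyp}, so conjugating by $\id+p_{\sq{S}_{-\infty,n}}$ would at best prove it for a well-chosen $\fF$, which is a weaker statement (and one that would not support the subsequent use of the lemma in \autoref{idep}, which is again for arbitrary $\fF$ as in \autoref{mainhyp}). Worse, asking for a single phantom endomorphism $p_{\sq{S}_{-\infty,n}}$ that makes $f\mapsto\fF(f)-f$ inner simultaneously for all targets $\sq{V}\in\SQpadc$ is precisely the shape of the open \autoref{noninner}; you have reduced the special case to a version of the general problem rather than solved it.

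The paper's argument is quite different and works with the target rather than the source. The first observation you are missing is that $\Hom_\Sq(\sq{S}_{-\infty,n},\sq{V})=0$ when $\sq{V}\in\SQpadc$ (since $\compat{\sq{V}}=0$), so $f$ is already of type $\ep$. One then composes with the truncation $\natd_{\sq{V},n}\colon\sq{V}\to V^{<n}$ and checks directly that $\natd_{\sq{V},n}\comp f$ is phantom (because $\Homep_\Sq(\sq{S}_{m,n},V^{<n})=0$). By \autoref{idph} and $\fF(\natd_{\sq{V},n})=\natd_{\sq{V},n}$ this gives $\natd_{\sq{V},n}\comp(\fF(f)-f)=0$, so $\fF(f)-f$ factors through $\natb_{\sq{V},n}\colon V^{\ge n}\to\sq{V}$. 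The decisive step, which has no analogue in your approach, is that because $\sq{V}\in\SQpadc$ the map into $V^{\ge n}$ factors through a \emph{compact} direct summand $\sq{\tilde V}$ of $\sq{V}$; a phantom morphism $\sq{S}_{-\infty,n}\to\sq{\tilde V}$ with compact target then vanishes, since it factors through $\sq{S}_{-\infty,m-1}$ for $m$ below the support of $\sq{\tilde V}$. No modification of $\fF$ is needed.
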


\begin{proof}
First note that $\Hom_\Sq(\sq{S}_{-\infty,n},\sq{V})=0$, since $\compat{\sq{V}}=0$ (see \autoref{Homindec}). This means that $f$ is of type $\ep$, and we claim that $\natd_{\sq{V},n}\comp f\colon\sq{S}_{-\infty,n}\to V^{<n}$ is phantom. In order to see this, by \autoref{phep} it is enough to show that $\natd_{\sq{V},n}\comp f\comp\natb_{\sq{S}_{-\infty,n},m}\colon\sq{S}_{m,n}\to V^{<n}$ is $0$ for every $m\le n$. In fact, this last morphism is of type $\ep$ and it is very easy to check that $\Homep_\Sq(\sq{S}_{m,n},\sq{W})=0$ if $\sq{W}\in\Sq$ is such that $W^i=0$ for $i\ge n$. Thus $\fF(\natd_{\sq{V},n}\comp f)=\natd_{\sq{V},n}\comp f$ by \autoref{idph} and, as also $\fF(\natd_{\sq{V},n})=\natd_{\sq{V},n}$ by assumption, we obtain
\[
\natd_{\sq{V},n}\comp\fF(f)=\fF(\natd_{\sq{V},n})\comp\fF(f)=\fF(\natd_{\sq{V},n}\comp f)=\natd_{\sq{V},n}\comp f.
\]
This proves that $g:=\fF(f)-f\colon\sq{S}_{-\infty,n}\to\sq{V}$ satisfies $\natd_{\sq{V},n}\comp g=0$. Then from the distinguished triangle \eqref{distruncn} we deduce that there exists $h\in\Hom_{\SQpa}(\sq{S}_{-\infty,n},V^{\ge n})$ such that $g=\natb_{\sq{V},n}\comp h$. Now, it is easy to see that there is a direct summand $\sq{\tilde{V}}\in\SQpc$ of $\sq{V}$ such that $h$ factors through the natural inclusion $\tilde{V}^{\ge n}\subseteq V^{\ge n}$, which implies that $g$ is the composition of $\tilde{g}\in\Hom_{\SQpa}(\sq{S}_{-\infty,n},\sq{\tilde{V}})$ and of the inclusion $\sq{\tilde{V}}\subseteq\sq{V}$. On the other hand, $g$ (hence also $\tilde{g}$) is phantom by \autoref{diffph}, whence for $m\le n$ we have $\tilde{g}\comp\natb_{\sq{S}_{-\infty,n},m}=0\colon\sq{S}_{m,n}\to\sq{\tilde{V}}$. So by \eqref{distruncn} $\tilde{g}$ factors through $\natd_{\sq{S}_{-\infty,n},m}\colon\sq{S}_{-\infty,n}\to\sq{S}_{-\infty,m-1}$. Choosing $m$ such that $\tilde{V}^i=0$ for $i<m$, we get $\tilde{g}=0$ (because $\Hom_{\SQpa}(\sq{S}_{-\infty,m-1},\sq{\tilde{V}})=0$), hence $g=0$. 
\end{proof}

\begin{prop}\label{idep}
If $f$ is a morphism of type $\ep$ of $\SQpa$, then $\fF(f)=f$.
\end{prop}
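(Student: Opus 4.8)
The strategy is to use that $\Sq$ is hereditary with enough injectives in order to factor an arbitrary type‑$\ep$ morphism through an injective object (reducing to a very special source), and then to exploit the truncation triangle \eqref{distrunc} together with \autoref{idkdc}, \autoref{mainhyp}, \autoref{diffph} and \autoref{idph}. First I would reduce to the case $\sq{V}=\sq{S}_{-\infty,m}$. Recall that $\Sq$ is a Grothendieck category (\autoref{lem:abGroth}), hence has enough injectives, and that $\Sqp$ is closed in $\Sq$ under subobjects and quotients (immediate from \autoref{Sqpchar}). Given $\sq{W}\in\SQpa$, embed $\sh[-1]{\sq{W}}$ into an injective object of $\Sq$ and let $0\to\sh[-1]{\sq{W}}\to\sq{J}\to\sq{J}'\to0$ be the resulting short exact sequence; by the previous remark $\sq{J},\sq{J}'\in\Sqp$, and since $\Sq$ is hereditary $\sq{J}'$ is again injective, so that $\sq{J},\sq{J}'\in\Sqd[\{-\infty\},\ZZ]$ by \autoref{injdec}. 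For $?=-$ one checks, via the description of an injective hull, that $\sq{J}$ and $\sq{J}'$ may be taken bounded above, so that the whole sequence lies in $\SQpa$. By \autoref{triangles} this sequence extends to a distinguished triangle $\sh[-1]{\sq{W}}\mor{j}\sq{J}\mor{q}\sq{J}'\mor{\delta}\sq{W}$ in $\SQpa$ with $\delta$ of type $\ep$. Applying $\Hom_{\SQpa}(\sq{V},\farg)$ to it and using that $\Homep_\Sq(\sq{V},\sq{J})\iso\Ext^1_\Sq(\sq{V},\sh[-1]{\sq{J}})=0$ because $\sh[-1]{\sq{J}}$ is injective (\autoref{Ext1}), one finds that every type‑$\ep$ morphism $f\colon\sq{V}\to\sq{W}$ can be written $f=\delta\comp u$ with $u\colon\sq{V}\to\sq{J}'$ of type $1$; as $\sq{J}'$ is injective, $\fF(u)=u$ by \autoref{injtar}. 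Writing $\sq{J}'=\bigoplus_\lambda\sq{S}_{-\infty,b_\lambda}$ (with $b_\lambda\in\ZZ$ since $\sq{J}'\in\Sqp$) and using \autoref{injtar} once more for the inclusions $\sq{S}_{-\infty,b_\lambda}\mono\sq{J}'$, the equality $\fF(\delta)=\delta$ — and hence $\fF(f)=f$ — would follow from the identity $\fF(\gamma)=\gamma$ for every type‑$\ep$ morphism $\gamma\colon\sq{S}_{-\infty,m}\to\sq{W}$ with $m\in\ZZ$ and $\sq{W}\in\SQpa$.

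So suppose $\gamma\colon\sq{S}_{-\infty,m}\to\sq{W}$ is of type $\ep$. Since $\cotr{\sq{W}},\sh{\cotr{\sq{W}}}\in\SQpadc$ (\autoref{trdc}), \autoref{mainhyp} gives $\fF(\natb_{\sq{W}})=\natb_{\sq{W}}$ and $\fF(\natc_{\sq{W}})=\natc_{\sq{W}}$, while \autoref{idkdc} gives $\fF(\natc_{\sq{W}}\comp\gamma)=\natc_{\sq{W}}\comp\gamma$, because $\natc_{\sq{W}}\comp\gamma$ is a morphism from $\sq{S}_{-\infty,m}$ to an object of $\SQpadc$. Hence $\natc_{\sq{W}}\comp(\fF(\gamma)-\gamma)=0$, and applying $\Hom_{\SQpa}(\sq{S}_{-\infty,m},\farg)$ to the triangle \eqref{distrunc} shows that $\fF(\gamma)-\gamma=\natb_{\sq{W}}\comp\mu$ for some $\mu\colon\sq{S}_{-\infty,m}\to\cotr{\sq{W}}$ with $\fF(\mu)=\mu$ (again by \autoref{idkdc}). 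Since $\natb_{\sq{W}}$ is an epimorphism of $\Sq$ with $\ker\natb_{\sq{W}}=\im\nata_{\sq{W}}$, after subtracting from $\mu$ the morphism factoring its type‑$1$ part through $\nata_{\sq{W}}$ — which alters neither $\natb_{\sq{W}}\comp\mu$ nor the property $\fF(\mu)=\mu$ — one may moreover assume that $\mu$ is of type $\ep$.

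It remains to prove $\natb_{\sq{W}}\comp\mu=0$; by \autoref{diffph} this morphism is phantom, and by \autoref{idph} it is fixed by $\fF$. The plan for this last step is to use that $\fF$ also restricts to the identity on the small category $\SQpbfg$ — this is inherited from the already‑settled bounded case (\autoref{rest} together with \cite[Theorem~7.1]{CY}, after the normalisation of \autoref{funiso}) — so that $\fF$ fixes every morphism with source one of the compact objects $\sq{S}_{k,m}\in\Sqpc$; combining this with the identification $\sq{S}_{-\infty,m}=\hocolim_k\sq{S}_{k,m}$, the fact that each $\gamma\comp\natb_{\sq{S}_{-\infty,m},k}$ and $\mu\comp\natb_{\sq{S}_{-\infty,m},k}$ factors through a finite subsum of $\cotr{\sq{W}}$ (by compactness), and the Milnor exact sequence for $\Hom_{\SQpa}(\sq{S}_{-\infty,m},\sq{W})$, one forces the phantom morphism $\natb_{\sq{W}}\comp\mu$ to vanish. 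I expect this final analysis — pinning down the phantom part of $\fF(\gamma)-\gamma$ through the interaction between the finitely generated approximations $\sq{S}_{k,m}$ of the source and the truncations of the target $\sq{W}$ — to be the main obstacle; everything preceding it is a routine application of the results already established in \autoref{sequences}, \autoref{dercatdual} and the present section.
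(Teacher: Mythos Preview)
Your reduction to source $\sq{S}_{-\infty,m}$ is sound and parallels the paper's argument, though you embed the \emph{target} into an injective whereas the paper embeds the \emph{source}. Both routes land in the same place: one must show $\fF(\gamma)=\gamma$ for a type-$\ep$ morphism $\gamma$ out of an object that is a coproduct of $\sq{S}_{-\infty,m}$'s, into an arbitrary $\sq{W}\in\SQpa$.

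The gap is in your final step. You observe $\natc_{\sq{W}}\comp(\fF(\gamma)-\gamma)=0$, lift the difference through $\natb_{\sq{W}}$, and then try to kill a phantom via Milnor sequences and compact approximations---a plan you yourself flag as the main obstacle and do not carry out. Phantoms are precisely what escapes compact detection, so there is no reason to expect that argument to close.

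The missing, and decisive, observation is that the composition of two type-$\ep$ morphisms in $\SQ$ is zero (immediately from the composition rule $g\comp f=\sq{g}_1\comp\sq{f}_1+[\sq{g}_1\comp\sq{f}_\ep+\sq{g}_\ep\comp\sq{f}_1]$). Since both $\gamma$ and $\natc_{\sq{W}}$ are of type $\ep$, one has $\natc_{\sq{W}}\comp\gamma=0$ \emph{on the nose}, not merely after applying $\fF$. Hence $\gamma$ itself factors as $\gamma=\natb_{\sq{W}}\comp h$ with $h\colon\sq{S}_{-\infty,m}\to\cotr{\sq{W}}$. As $\cotr{\sq{W}}\in\SQpadc$ (\autoref{trdc}), \autoref{idkdc} gives $\fF(h)=h$; combined with $\fF(\natb_{\sq{W}})=\natb_{\sq{W}}$ (\autoref{mainhyp}) this yields $\fF(\gamma)=\gamma$ immediately. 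This is exactly how the paper proceeds (with the source already an injective coproduct of $\sq{S}_{-\infty,n}$'s), and it replaces your entire last paragraph.
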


\begin{proof}
Let $f\in\Homep_{\SQpa}(\sq{U},\sq{V})$. Since $\Sq$ has enough injectives, there exists an exact sequence
\begin{equation}\label{injres}
0\to\sq{U}\mor{l}\sq{W}\to\sq{\quot{W}}\to0
\end{equation}
in $\Sq$ with $\sq{W}$ injective, and we claim that we can suppose $\sq{W}$ (hence also $\sq{\quot{W}}=\cok(l)$) to be in $\Sqpa$. First observe that $\sq{W}\in\Sqd[\{-\infty\},\ZZ\cup\{\infty\}]$ by \autoref{injdec}. As every morphism from $\sq{U}$ to an object of $\Sqd[\{-\infty\},\{\infty\}]$ is trivial (see \autoref{DSQp} and \autoref{Sqachar}), in any case we can assume $\sq{W}\in\Sqd[\{-\infty\},\ZZ]\subseteq\Sqp$. Moreover, if $\sq{U}\in\Sqpm$, then, by \autoref{Sqpachar}, there exists $n\in\ZZ$ such that $U^i=0$ for every $i>n$. This easily implies that every copy in $\sq{W}$ of $\sq{S}_{-\infty,i}$ with $i>n$ can be removed. Thus we can assume $\sq{W}\in\Sqd[\{-\infty\},\ZZ_{\le n}]\subseteq\Sqpm$.

Denoting by $e\in\Homep_\Sq(\sh[-1]{\sq{\quot{W}}},\sq{U})$ the morphism such that $\sh{e}$ corresponds (by \autoref{Ext1}) to the isomorphism class of \eqref{injres} in $\Ext^1_\Sq(\sq{\quot{W}},\sq{U})$, by \autoref{triangles} there is a distinguished triangle $\sh[-1]{\sq{\quot{W}}}\mor{e}\sq{U}\mor{l}\sq{W}$ in $\SQpa$. Then from $f\comp e=0$ (which holds because both $f$ and $e$ are of type $\ep$) we deduce the existence of $g\in\Hom_{\SQpa}(\sq{W},\sq{V})$ such that $f=g\comp l$, and we can assume $g$ to be of type $\ep$ (since $l$ is of type $1$). Similarly, using the distinguished triangle \eqref{distrunc} (and the fact that $\natb_{\sq{V}}$ is of type $1$ and $\natc_{\sq{V}}$ of type $\ep$), from $\natc_{\sq{V}}\comp g=0$ we obtain the existence of $h\in\Hom_{\SQpa}(\sq{W},\cotr{\sq{V}})$ such that $g=\natb_{\sq{V}}\comp h$ (hence $f=\natb_{\sq{V}}\comp h\comp l$), and again we can assume $h$ to be of type $\ep$. Now, $\fF(\natb_{\sq{V}})=\natb_{\sq{V}}$ by assumption and $\fF(l)=l$ by \autoref{injtar}. Therefore, in order to conclude that $\fF(f)=f$, it is enough to prove that $\fF(h)=h$. By what we have already observed, $\sq{W}\iso\bigoplus_{\lambda\in\Lambda}\sq{W}_\lambda$, where each $\sq{W}_\lambda$ is of the form $\sq{S}_{-\infty,n}$ for some $n\in\ZZ$. Denoting by $\inc_\lambda\colon\sq{W}_\lambda\to\sq{W}$ the natural morphism, $\fF(h)=h$ follows from the fact that for every $\lambda\in\Lambda$ we have
\[
\fF(h)\comp\inc_\lambda=\fF(h)\comp\fF(\inc_\lambda)=\fF(h\comp\inc_\lambda)=h\comp\inc_\lambda,
\]
where the first equality holds because $\fF(h)$ and $\fF(\inc_\lambda)-\inc_\lambda$ are of type $\epsilon$ (by \autoref{diffph}), whereas the last one is due to \autoref{trdc} and \autoref{idkdc}.
\end{proof}

Now we choose, for every $\sq{V}\in\Sqpa$, a subobject $\ninj{\sq{V}}\subseteq\sq{V}$ such that
\begin{equation}\label{decomp}
\sq{V}=\inj{\sq{V}}\oplus\ninj{\sq{V}}
\end{equation}
(this is possible thanks to \autoref{injpart}).

\begin{lem}\label{extrahyp}
We can assume that, beyond being as in \autoref{mainhyp}, $\fF$ acts as the identity on the inclusions of $\inj{\sq{V}}$ and $\ninj{\sq{V}}$ in $\sq{V}$, for every $\sq{V}\in\SQpa$.
\end{lem}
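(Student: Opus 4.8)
The plan is to modify $\fF$ by a single conjugation, of the kind permitted by \autoref{extraiso}, so that it fixes the two inclusions $j_{\sq{V}}\colon\inj{\sq{V}}\to\sq{V}$ and $j'_{\sq{V}}\colon\ninj{\sq{V}}\to\sq{V}$ from \eqref{decomp}; write $\pi_{\sq{V}}\colon\sq{V}\to\inj{\sq{V}}$ and $\pi'_{\sq{V}}\colon\sq{V}\to\ninj{\sq{V}}$ for the corresponding projections, so that $\pi_{\sq{V}}j_{\sq{V}}=\id$, $\pi'_{\sq{V}}j'_{\sq{V}}=\id$ and $\pi_{\sq{V}}j'_{\sq{V}}=\pi'_{\sq{V}}j_{\sq{V}}=0$. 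Since $\inj{\sq{V}}$ and $\ninj{\sq{V}}$ are direct summands of $\sq{V}$, they lie in $\SQpa$ (see \autoref{indec2}), so \autoref{diffph} applies and shows that $q_{\sq{V}}:=\fF(j_{\sq{V}})-j_{\sq{V}}$ and $q'_{\sq{V}}:=\fF(j'_{\sq{V}})-j'_{\sq{V}}$ are phantom; hence so is $p_{\sq{V}}:=q_{\sq{V}}\pi_{\sq{V}}+q'_{\sq{V}}\pi'_{\sq{V}}\colon\sq{V}\to\sq{V}$, because phantom morphisms form an ideal. I would then conjugate $\fF$ with the family $\{\id_{\sq{V}}-p_{\sq{V}}\}_{\sq{V}\in\SQpa}$. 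Each $\id_{\sq{V}}-p_{\sq{V}}$ is an isomorphism with inverse $\id_{\sq{V}}+p_{\sq{V}}$, since $p_{\sq{V}}^2=0$: indeed phantoms are of type $\ep$ by \autoref{phep}, and the composite of two morphisms of type $\ep$ vanishes (both have zero type-$1$ component, so the composition formula in $\SQ$ gives $0$). By \autoref{extraiso} the resulting functor $\fG$ still satisfies the hypotheses of \autoref{mainhyp}.

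The crucial point will be that the conjugating isomorphism restricts to the identity on $\inj{\sq{V}}$ and on $\ninj{\sq{V}}$, i.e.\ $p_{\inj{\sq{V}}}=0$ and $p_{\ninj{\sq{V}}}=0$; this is exactly what makes the chosen family consistent. Here I would invoke \autoref{injpart}. On the one hand $\inj{\sq{V}}$ is injective, hence is its own maximal injective subobject, so $j_{\inj{\sq{V}}}=\id$ and $j'_{\inj{\sq{V}}}=0$, whence $q_{\inj{\sq{V}}}=\fF(\id)-\id=0$ and $q'_{\inj{\sq{V}}}=\fF(0)-0=0$, giving $p_{\inj{\sq{V}}}=0$. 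On the other hand, because $\inj{\sq{V}}$ is the \emph{maximal} injective subobject of $\sq{V}$ (by \autoref{injpart}), its chosen complement $\ninj{\sq{V}}$ contains no non-zero injective subobject, so $\inj{(\ninj{\sq{V}})}=0$; thus $\pi_{\ninj{\sq{V}}}=0$ and $j'_{\ninj{\sq{V}}}=\id$, and again $p_{\ninj{\sq{V}}}=q_{\ninj{\sq{V}}}\pi_{\ninj{\sq{V}}}+q'_{\ninj{\sq{V}}}\pi'_{\ninj{\sq{V}}}=0$.

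With these two facts the verification is a short computation. Setting $\mu_{\sq{V}}:=\id_{\sq{V}}-p_{\sq{V}}$ and using $p_{\inj{\sq{V}}}=0$,
\[
\fG(j_{\sq{V}})=\mu_{\sq{V}}\comp\fF(j_{\sq{V}})=(\id_{\sq{V}}-p_{\sq{V}})(j_{\sq{V}}+q_{\sq{V}})=j_{\sq{V}}+q_{\sq{V}}-p_{\sq{V}}j_{\sq{V}}-p_{\sq{V}}q_{\sq{V}},
\]
where $p_{\sq{V}}q_{\sq{V}}=0$ (both phantom) and $p_{\sq{V}}j_{\sq{V}}=q_{\sq{V}}(\pi_{\sq{V}}j_{\sq{V}})+q'_{\sq{V}}(\pi'_{\sq{V}}j_{\sq{V}})=q_{\sq{V}}$, so $\fG(j_{\sq{V}})=j_{\sq{V}}$; symmetrically, using $p_{\ninj{\sq{V}}}=0$, $\pi_{\sq{V}}j'_{\sq{V}}=0$ and $\pi'_{\sq{V}}j'_{\sq{V}}=\id$, one gets $\fG(j'_{\sq{V}})=j'_{\sq{V}}$. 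Replacing $\fF$ by $\fG$ then yields the lemma. There is no genuine obstacle in this argument: the only delicate point is the consistency of the conjugating family, i.e.\ the vanishing $p_{\inj{\sq{V}}}=p_{\ninj{\sq{V}}}=0$, and this is precisely guaranteed by the maximality statement in \autoref{injpart}.
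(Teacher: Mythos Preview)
Your argument is correct and follows essentially the same strategy as the paper: both construct, for each $\sq{V}$, a conjugating isomorphism of the form $\id_{\sq{V}}+(\text{phantom})$ that intertwines $\fF(\inc_\lambda)$ with $\inc_\lambda$, then invoke \autoref{extraiso}. The only cosmetic difference is that the paper characterizes this isomorphism abstractly via the universal property of the coproduct (and then argues the correction term is phantom), whereas you write down the explicit formula $p_{\sq{V}}=q_{\sq{V}}\pi_{\sq{V}}+q'_{\sq{V}}\pi'_{\sq{V}}$; your map $\id-p_{\sq{V}}$ coincides with the paper's $\nat_{\sq{V}}$, and your careful check that $p_{\inj{\sq{V}}}=p_{\ninj{\sq{V}}}=0$ makes explicit a point the paper leaves implicit.
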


\begin{proof}
More generally, if $\sq{V}$ is a coproduct in $\Sqpa$, given by morphisms $\inc_\lambda\colon\sq{V}_\lambda\to\sq{V}$ (where $\lambda\in\Lambda$), let $\nat_{\sq{V}}\colon\fF(\sq{V})\to\sq{V}$ be the unique isomorphism of $\SQpa$ such that $\inc_\lambda=\nat_{\sq{V}}\comp\fF(\inc_\lambda)$ for every $\lambda\in\Lambda$ (see the proof of \autoref{idSQpadc}). Since each $\inc_\lambda$ is of type $1$ and $\fF(\inc_\lambda)-\inc_\lambda$ is phantom (by \autoref{diffph}), it is clear that $\nat_{\sq{V}}=\id_{\sq{V}}+p_{\sq{V}}$ with $p_{\sq{V}}\colon\sq{V}\to\sq{V}$ of type $\ep$ and such that $p_{\sq{V}}\comp\inc_\lambda$ is phantom for every $\lambda\in\Lambda$. Actually this last condition easily implies (using the fact that every morphism $\sq{U}\to\sq{V}$ with $\sq{U}\in\SQpc$ factors through $\bigoplus_{\lambda\in\Lambda'}\sq{V}_\lambda$ for some finite subset $\Lambda'$ of $\Lambda$) that $p_{\sq{V}}$ is phantom, as well. If we consider, for every $\sq{V}\in\SQpa$, the decomposition as a coproduct given by \eqref{decomp}, we conclude (remembering \autoref{extraiso}) that $\fF$ has the required properties, up to conjugation with the $\nat_{\sq{V}}$. 
\end{proof}

\begin{prop}\label{idninj}
Let $\fF$ be as in \autoref{extrahyp}. Then for every morphism of $\SQpa$
\[
f=
\begin{pmatrix}
f_{i,i} & f_{n,i} \\
f_{i,n} & f_{n,n}
\end{pmatrix}
\colon\sq{V}=\inj{\sq{V}}\oplus\ninj{\sq{V}}\to\sq{W}=\inj{\sq{W}}\oplus\ninj{\sq{W}}
\]
(with respect to the fixed decompositions given by \eqref{decomp}) we have $\fF(f)=f$ if and only if $\fF(f_{n,n})=f_{n,n}$. In particular, $\fF(f)=f$ if $\sq{V}\in\SQpad$.
\end{prop}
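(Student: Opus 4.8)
The plan is to decompose $f$ into the four blocks $f_{i,i},f_{n,i},f_{i,n},f_{n,n}$ determined by the fixed direct-sum decompositions \eqref{decomp} of $\sq{V}$ and $\sq{W}$, to show that $\fF$ automatically fixes the first three, and thereby reduce the assertion to the block $f_{n,n}$; the ``in particular'' clause will then follow by identifying $\ninj{\sq{V}}$, when $\sq{V}\in\SQpad$, with an object of $\SQpadc$. First I would note that $\fF$ fixes not only the two inclusions $\iota^{a}$ ($a\in\{\mathrm{i},\mathrm{n}\}$) of \eqref{decomp}, by \autoref{extrahyp}, but also the two projections $\pi^{a}$: this follows from functoriality applied to the biproduct relations of \eqref{decomp} together with $\fF(\iota^{a})=\iota^{a}$ and $\fF(\id)=\id$. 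Since each block of $f$ is obtained from $f$ by pre- and post-composition with these maps, and $\fF$ is additive, we get $\fF(f)-f=\sum_{a,b}\iota^{a}\comp(\fF(f_{a,b})-f_{a,b})\comp\pi^{b}$, so that $\fF(f)=f$ if and only if $\fF$ fixes all four blocks.

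Next I would dispatch the three blocks other than $f_{n,n}$. Both $f_{i,i}$ and $f_{n,i}$ have target $\inj{\sq{W}}$, which is injective in $\Sq$, so they are fixed by \autoref{injtar}. For $f_{i,n}\colon\inj{\sq{V}}\to\ninj{\sq{W}}$ I would show it is of type $\ep$ and apply \autoref{idep}: being a direct summand of $\sq{V}\in\Sqp$, the injective object $\inj{\sq{V}}$ lies in $\Sqp$, hence by \autoref{injdec} and \autoref{indec2} is a coproduct of objects $\sq{S}_{-\infty,c}$ with $c\in\ZZ$; and $\compat{\ninj{\sq{W}}}=0$, because $\sm[\sq{W}]^{i}$ is block-diagonal for \eqref{decomp}, so $\compat{\sq{W}}=\compat{\inj{\sq{W}}}\oplus\compat{\ninj{\sq{W}}}$, while by the defining property of $\inj{\sq{W}}$ in \autoref{Homindec} the inclusion induces an isomorphism $\compat{\inj{\sq{W}}}\iso\compat{\sq{W}}$; then \autoref{Homindec} gives $\Hom_\Sq(\inj{\sq{V}},\ninj{\sq{W}})=0$, so $f_{i,n}$ has vanishing type-$1$ part. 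Combining the three computations, $\fF(f)=f$ if and only if $\fF(f_{n,n})=f_{n,n}$.

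For the last assertion, let $\sq{V}\in\SQpad$, so $\sq{V}\iso\bigoplus_{\lambda}\sq{S}_{a_\lambda,b_\lambda}$ with all $b_\lambda\in\ZZ$ (by definition of $\SQpad$, using $\sq{V}\in\Sqp$ and \autoref{indec2}). A direct computation gives $\compat{\sq{S}_{a,b}}=0$ whenever $a\in\ZZ$, and since the differentials of $\sq{V}$ are block-diagonal for this decomposition it follows that $\compat{\sq{V}}$ is carried by the summands with $a_\lambda=-\infty$; hence the subobject $\inj{\sq{V}}$ of \autoref{Homindec} equals $\sq{U}:=\bigoplus_{a_\lambda=-\infty}\sq{S}_{-\infty,b_\lambda}$ (which, being injective, is its own maximal injective subobject), and therefore $\ninj{\sq{V}}\iso\sq{V}/\sq{U}\iso\bigoplus_{a_\lambda\in\ZZ}\sq{S}_{a_\lambda,b_\lambda}\in\SQd[\ZZ,\ZZ]$. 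Being a direct summand of $\sq{V}\in\SQpa$, the object $\ninj{\sq{V}}$ then lies in $\SQpadc$, so $f_{n,n}$ has source in $\SQpadc$ and is fixed by $\fF$ thanks to \autoref{mainhyp}; consequently $\fF(f)=f$.

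The step I expect to be most delicate is this explicit description of the injective subobjects: namely $\compat{\ninj{\sq{W}}}=0$, and that for $\sq{V}\in\SQpad$ the maximal injective subobject $\inj{\sq{V}}$ is exactly the sub-coproduct of the summands of the form $\sq{S}_{-\infty,c}$, so that the chosen complement lands in $\SQd[\ZZ,\ZZ]$. Once these are in hand the rest is routine block-matrix bookkeeping, combined with \autoref{injtar}, \autoref{idep} and \autoref{mainhyp}.
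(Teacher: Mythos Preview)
Your proof is correct and follows essentially the same approach as the paper's: reduce to the four blocks, dispose of $f_{i,i}$ and $f_{n,i}$ via \autoref{injtar}, of $f_{i,n}$ via \autoref{idep} after checking it is of type $\ep$, and handle the ``in particular'' clause by showing $\ninj{\sq{V}}\in\SQpadc$ when $\sq{V}\in\SQpad$. You supply more detail than the paper in three places (that $\fF$ fixes the projections, that $f_{i,n}$ is of type $\ep$ via $\compat{\ninj{\sq{W}}}=0$, and that $\inj{\sq{V}}$ is exactly the sub-coproduct of the $\sq{S}_{-\infty,b_\lambda}$), but the logical skeleton is the same.
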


\begin{proof}
Clearly $\fF(f)=f$ if and only if $\fF(f_{l,m})=f_{l,m}$ for $l,m\in\{i,n\}$. Then the first statement follows from the fact that $\fF(f_{i,i})=f_{i,i}$, $\fF(f_{n,i})=f_{n,i}$ (by \autoref{injtar}) and $\fF(f_{i,n})=f_{i,n}$ (by \autoref{idep}, since $f_{i,n}$ is of type $\ep$ as an easy consequence of \autoref{Homindec}). As for the last statement, it is enough to observe that, if $\sq{V}\in\SQpad$, then $\ninj{\sq{V}}\in\SQpadc$; hence $\fF(f_{n,n})=f_{n,n}$ in this case.
\end{proof}

\subsection{Open problems}\label{spec}

In the setting of \autoref{results}, $\fF$ is the identity on objects and it is determined by its action on morphisms of type $1$ (by \autoref{idep}). Moreover, $\fD(f):=\fF(f)-f\in\Homph_\Sq(\sq{V},\sq{W})$ for every $f\in\Hom_{\Sqpa}(\sq{V},\sq{W})$ (by \autoref{diffph}). It is immediate to see that $\fF$ is a ($\kk$-linear) functor if and only if $\fD$ is a \emph{derivation}, meaning that $\fD$ is given by $\kk$-linear maps such that $\fD(g\comp f)=\fD(g)\comp f+g\comp\fD(f)$ whenever $f$ and $g$ are composable morphisms of $\Sqpa$. On the other hand, $\fF\iso\id_{\SQpa}$ if and only if $\fD$ is an \emph{inner derivation}, meaning that there exists $\theta_{\sq{V}}\in\Homep_\Sq(\sq{V},\sq{V})$ for every $\sq{V}\in\Sqpa$ such that $\fD(f)=f\comp\theta_{\sq{V}}-\theta_{\sq{W}}\comp f$ for every $f\in\Hom_{\Sqpa}(\sq{V},\sq{W})$.

\begin{qn}\label{noninner}
Is every derivation (in the above sense) inner?
\end{qn}

Clearly if the answer to \autoref{noninner} is yes, then the answer to \autoref{mainqn} is yes, too. Thus we would be able to prove strong uniqueness for the enhancements for $\Da(\Mod{\kep})$, for $?=\emptyset,-$. However, if the answer to \autoref{noninner} is no, then a priori we can only conclude that there exists a ($\kk$-linear, but not necessarily exact) autoequivalence $\fF$ of $\SQpa$ which is not isomorphic to the identity. 

In order to be able to answer \autoref{mainqn} or \autoref{noninner} it could be useful to find some sort of classification of the (non completely decomposable) objects of $\Sqpa$ (for $?=\emptyset,-$).

\bigskip

{\small\noindent{\bf Acknowledgements.} Part of this work was carried out while the third author was visiting the Institut des Hautes \'Etudes Scientifiques (Paris), the SLMAth Institute (Berkeley) and the Laboratoire de Math\'ematiques d'Orsay (Universit\'e Paris-Saclay) whose warm hospitality is gratefully acknowledged.}

%%%%%%%%%

\end{document}